\newcommand{\skel}[1]{^{(#1)}}
\newcommand{\dom}{\mathop{\boldsymbol d}}
\newcommand{\ran}{\mathop{\boldsymbol r}}
\newcommand{\supp}{\mathop{\mathrm{supp}}}
\newcommand{\inv}{^{-1}}
\newcommand{\p}{\varphi}
\newcommand{\til}[1]{\ensuremath{\widetilde {#1}}}
\newcommand{\wh}{\widehat}
\newcommand{\module}[1]{#1\text{-}\mathbf{mod}}
\newtheorem{Thm}{Theorem}[section]
\newtheorem{Prop}[Thm]{Proposition}
\newtheorem{Lemma}[Thm]{Lemma}
{\theoremstyle{definition}
\newtheorem{Def}[Thm]{Definition}}
{\theoremstyle{remark}
\newtheorem{Rmk}[Thm]{Remark}}
\newtheorem{Cor}[Thm]{Corollary}
{\theoremstyle{remark}
}
{\theoremstyle{remark}
\newtheorem{Example}[Thm]{Example}}
\theoremstyle{remark}
\theoremstyle{remark}
\theoremstyle{remark}
\numberwithin{equation}{section}
\title[\'Etale groupoid algebras with coefficients in a sheaf]{\'Etale groupoid algebras with coefficients in a sheaf and skew inverse semigroup rings}
\author{Daniel Gon\c{c}alves\and Benjamin Steinberg}
\address[D.~Gon\c{c}alves]{%
	Departmento de Matem\'atica\\
	Universidade Federal de Santa Catarina\\
	Florian\'{o}polis, SC, 88040-900\\
	Brazil}
	\email{daemig@gmail.com}	
\address[B.~Steinberg]{%
    Department of Mathematics\\
    City College of New York\\
    Convent Avenue at 138th Street\\
    New York, New York 10031\\
    USA}
\email{bsteinberg@ccny.cuny.edu}
\thanks{The second author thanks the Fulbright commission for its support in visiting the Federal University of Santa Catarina in Brazil. The first author was partially supported by CNPq and Capes-PrInt, Brazil.}
\date{\today}
\keywords{\'etale groupoids, inverse semigroups, rings}
\begin{document}

\begin{abstract}
Given an action $\p$ of of inverse semigroup $S$ on a ring $A$ (with domain of $\p(s)$ denoted by $D_{s^*}$) we show that if the ideals $D_e$, with $e$ an idempotent, are unital, then the skew inverse semigroup ring $A\rtimes S$ can be realized as the convolution algebra of an ample groupoid with coefficients in a sheaf of rings. Conversely, we show that the convolution algebra of an ample groupoid with coefficients in a sheaf of rings is isomorphic to a skew inverse semigroup ring of this sort. We recover known results in the literature for Steinberg algebras over a field as special cases.
\end{abstract}

\maketitle

\section{Introduction}

Convolution algebras associated to groupoids and skew rings associated to actions are driving forces in ring theory. For example, the convolution algebra associated to an ample groupoid, also known as the Steinberg algebra~\cite{Steinbergalgebra}, has been used in the study of combinatorial algebras such as Leavitt path algebras~\cite{LeavittBook}, Kumijian-Pask algebras~\cite{cp}, separated graph algebras~\cite{separatedgraphs}, among others, see for example~\cite{aradia, chuef}. Skew inverse semigroup rings are also useful tools in the study of algebras arising from combinatorial objects, see for example~\cite{goncalvesroyer, goncalvesroyer1, OinertChain}. Furthermore, both skew inverse semigroup rings and Steinberg algebras have deep connections with topological dynamics and $C^*$-algebra theory, see for example~\cite{bcfs, BE, cardia, BC, BGOR2017, Exel}.

Although both theories are intimately connected, it is safe to state that the theory of groupoid algebras is further developed than the theory of inverse semigroup skew rings. Therefore, connections between the two theories, that allow one to pass results from one setting to the other, present an immediate advance in the theory of inverse semigroup skew rings, all the while offering a new point of view and direction of development for groupoid algebras.
In the purely algebraic setting, the first connections between the theories was established in~\cite{BG} where, motivated by Exel's results in $C^*$-algebra theory (see~\cite{Exel}), the authors realize certain partial skew group rings as Steinberg algebras and show that Steinberg algebras associated to Hausdorff ample groupoids can be seen as skew inverse semigroup rings (this latter result is implicit in the arxiv version of \cite{Steinbergalgebra}, see ~\cite[Section 4.3]{arxivgroupoid}, where it is proved in
the language of covariant representations rather than that of skew inverse
semigroup rings). These results are generalized to graded ample Hausdorff groupoids in~\cite{Hazrat}. The description of Steinberg algebras as skew inverse semigroup rings is generalized to not necessarily Hausdorff groupoids in~\cite{Demeneghi}. In~\cite{groupoidbundles} the interplay between partial skew inverse semigroup rings and Steinberg algebras is studied and it is proved that skew inverse semigroup rings of the form $C_c(X,R)\rtimes S$, where $C_c(X,R)$ is the ring of compactly supported locally constant $R$-valued mappings on a locally compact Hausdorff and zero-dimensional space $X$, can be seen as Steinberg algebras of appropriate groupoids of germs. Finally, in~\cite{BC} it is shown that the category of unitary $A_R(\mathscr G)$-modules, where $A_R(\mathscr G)$ denotes the Steinberg algebra  over $R$ associated to an etale groupoid $\mathscr G$ with locally compact totally disconnected unit space, is equivalent to the category of sheaves of $R$-modules over $\mathscr G$.  This is an algebraic analogue of Renault's disintegration theorem~\cite{renaultdisintegration} for representations of groupoid $C^*$-algebras.

Our goal is to deepen the above results, obtaining a description of skew inverse semigroup rings in terms of groupoid convolution algebras. For this we pass to sheaf theory and define a convolution algebra of an ample groupoid with coefficients in a sheaf of rings (see Section~\ref{s:sheaf.coeff}). To make the interplay between groupoid algebras and skew inverse semigroup rings complete we then show that this newly defined groupoid convolution algebra can be seen as a skew inverse semigroup ring, and vice versa. We describe more precisely our work below.

In Sections~\ref{ample groupoids} and~\ref{issr} we discuss inverse semigroups, ample groupoids and skew inverse semigroup rings, setting up notation and key definitions that will be used throughout the paper. We introduce a convolution algebra $\Gamma_c(\mathscr G,\mathcal O)$, associated to a sheaf of rings $\mathcal O$ over an ample groupoid $\mathscr G$, in Section~\ref{s:sheaf.coeff}. When the sheaf in question is a constant sheaf of commutative rings, we recover the  Steinberg algebra introduced by the second author in~\cite{Steinbergalgebra}. In Section~\ref{s:as.quotient}, we show that if $\mathscr G$ is an ample groupoid, $\mathcal O$ is a $\mathscr G$-sheaf of rings and $S\leq \mathscr G^a$ (where $\mathscr G^a$ is the inverse semigroup of compact open bisections) is an inverse subsemigroup satisfying $\mathscr G\skel 1=\bigcup S$ (G1), then $\Gamma_c(\mathscr G,\mathcal O)$ is a quotient of $\Gamma_c(\mathscr G\skel 0, \mathcal O)\rtimes S$ for an appropriate  action of $S$ on the ``diagonal'' subalgebra $\Gamma_c(\mathscr G\skel 0,\mathcal O)$ of compactly supported sections of $\mathcal O$ over $\mathscr G\skel 0$ with pointwise operations.

We develop the tools to prove that the quotient map of Section~\ref{s:as.quotient} is an isormorphism (under the mild hypothesis (G2), see Section~\ref{ample groupoids}) in Section~\ref{desintegration}. To describe our key result in this section, let $R=\Gamma_c(\mathscr G,\mathcal O)$, where $\mathscr G$ is an ample groupoid and $\mathcal O$ is a $\mathscr G$-sheaf of rings. We prove that $\module{R}$ can be identified with the category of $\mathscr G$-sheaves of $\mathcal O$-modules, where  $\module{R}$ is the category of unitary (left) $R$-modules (this generalizes the disintegration theorem of~\cite{groupoidbundles}). In Section~\ref{gpdasisr} we show that unitary $\Gamma_c(\mathscr G\skel 0,\mathcal O)\rtimes S$-modules also can be represented as modules of global sections of $\mathscr G$-sheaves of $\mathcal O$-modules, where  $S\leq \mathscr G^a$ is an inverse subsemigroup satisfying  the germ conditions (G1) and (G2). With this we show that the quotient map of Section~\ref{s:as.quotient} is an isomorphism; see Theorem~\ref{t:factor.through}.

In Section~\ref{Piercerep} we generalize the Pierce representation of a ring~\cite{Pierce} as global sections of a sheaf of rings over a Stone space in two ways: we consider rings with local units and we allow smaller generalized Boolean algebras. Finally we finish the paper in Section~\ref{isrAsGpd}, where we prove the converse of Theorem~\ref{t:factor.through} by showing that every skew inverse semigroup ring (with respect to a sufficiently nice action) is isomorphic to an ample groupoid convolution algebra with coefficients in a sheaf of rings.

\section{Inverse semigroups and ample groupoids}\label{ample groupoids}
In this section, we recall some basic notions concerning inverse semigroups and ample groupoids.

\subsection{Inverse semigroups}
An \emph{inverse semigroup} is a semigroup $S$ such that, for all $s\in S$, there is a unique element $s^*\in S$ with $ss^*s=s$ and $s^*ss^*=s^*$.  Note that $s^*s$ and $ss^*$ are idempotents.  The set $E(S)$ of idempotents of $S$ is a commutative subsemigroup and is a meet semilattice via the partial ordering $e\leq f$ if $ef=e$.  This partial order extends to a compatible partial order on $S$ by putting $s\leq t$ if $ss^*t=s$ or, equivalently, $ts^*s=s$.  One can show that $(st)^*=t^*s^*$ using that idempotents commute. See~\cite{Lawson} for a detailed introduction to inverse semigroups.

Homomorphisms between inverse semigroups automatically preserve the involution and send idempotents to idempotents.  They also preserve the natural partial order discussed above.  A mapping $\p\colon S\to T$ of inverse semigroups is a homomorphism if and only if it is order preserving,  restricts to a homomorphism on idempotents and satisfies $\p(st)=\p(s)\p(t)$ whenever $s^*s=tt^*$; see~\cite[Chapter~3, Theorem~5]{Lawson}.

The prototypical example of an inverse semigroup is the semigroup of all partial bijections of a set $X$ under composition of partial mappings.  The natural partial order in this case is just the restriction ordering: $f\leq g$ if $f$ is a restriction of $g$.  Every inverse semigroup $S$ can be faithfully represented as an inverse semigroup of partial bijections of itself.

\subsection{Ample groupoids}

An \emph{\'etale groupoid} is a topological groupoid  $\mathscr G$ such that its unit space  $\mathscr G\skel 0$ is locally compact and Hausdorff and its range map $\ran$ is a local homeomorphism (this implies that the domain map $\dom$ and the multiplication map are also local homeomorphisms). A bisection of $\mathscr G$ is a subset $B\subseteq \mathscr G$ such that the restriction of the range and source maps to $B$ are injective. An \'etale groupoid is \emph{ample} if its unit space has a basis of compact open sets or, equivalently, if the arrow space $\mathscr G\skel 1$ has a basis of compact open bisections.

We denote by $\mathscr G^a$ the set of all compact open bisection of $\mathscr G$. It is an inverse semigroup with operations given by \[BC = \{ bc \in \mathscr G \mid b\in B, c\in C, \text{ and } \dom(b) = \ran(c)\}\] and $B^*=\{b^{-1}\mid b\in B\}$.  We often write $B\inv$ instead of $B^*$ in the inverse semigroup  $\mathscr G^a$.
We remark for future use that, for every open bisection $B$, of an ample groupoid $\mathscr G$, the range and source maps are homeomorphisms from $B$ to $\dom(B)$ and $\ran(B)$, respectively.

From now on, following Bourbaki, the term ``compact'' will include the Hausdorff axiom.  However, a space can be locally compact without being Hausdorff.
If $f\colon X\to Z$ and $g\colon Y\to Z$ are maps of spaces, then their \emph{pullback} is \[X\times_{f,g} Y=\{(x,y)\mid f(x)=g(y)\}\] (with the subspace topology of the product space).

Let $X$ be a topological space. Then $I_X$ denotes the inverse monoid of all homeomorphisms between open subsets of $X$.  An action of an inverse semigroup $S$ on $X$ is a homomorphism $\rho\colon S\to I_X$.  To be consistent with notation in the literature, we put $\rho(s)=\rho_s$ and write $\rho_s\colon D_{s^*}\to D_s$.  We say the action is \emph{non-degenerate} if $X=\bigcup_{e\in E(S)} D_e$.  We shall always assume that all our actions are non-degenerate. A non-degenerate action is called \emph{Boolean} if $X$ is Hausdorff, has a basis of compact open sets and $D_e$ is compact open for all $e\in E(S)$ (i.e., each $D_s$ is compact open).  Sometimes this is also referred to as an ample system in the literature.

For example, if $\mathscr G$ is an ample groupoid, then $\mathscr G^a$, the inverse semigroup of compact open bisections,  has a Boolean action on $\mathscr G\skel 0$ defined as follows.  To each $U\in \mathscr G^a$, we associate the homeomorphism  $\rho_U = \ran \circ (\dom|_U)\inv\colon \dom(U)\to \ran(U)$.  The mapping $U\mapsto \rho_U$ is a Boolean action.

Let $S\leq \mathscr G^a$ be an inverse subsemigroup. We define two conditions on $S$ as follows.
\begin{enumerate}
\item [(G1)] $\mathscr G\skel 1=\bigcup S$.
\item [(G2)] If $\gamma\in U\cap V$ with $U,V\in S$, then there is $W\in S$ with $\gamma\in W\subseteq U, V$.
\end{enumerate}
We shall say that $S\leq \mathscr G^a$ satisfies the \emph{germ conditions} if it satisfies (G1) and (G2).  Obviously $\mathscr G^a$ satisfies the germ conditions, being a basis for the topology on $\mathscr G\skel 1$.

Given a Boolean action $\rho$ of an inverse semigroup $S$ on a space $X$, we can form an ample groupoid $\mathscr G=S\ltimes X$, called the \emph{groupoid of germs} of the action.  Details can be found in~\cite{Exel,Steinbergalgebra}, we just provide the definitions.  One has $\mathscr G\skel 0= X$ and \[\mathscr G\skel 1 = \{(s,x)\in S\times X\mid x\in D_{s^*}\}/{\sim}\] where $(s,x)\sim (t,y)$ if and only if $x=y$ and there exists $u\leq s,t$ with $x\in D_{u^*}$.  We write $[s,x]$ for the class of $(s,x)$.  The groupoid structure is defined as follows.  We put $\dom([s,x]) = x$ and $\ran([s,x]) = \rho_s(x)$ (which is independent of the choice of $s$).  The product is defined by $[s,\rho_t(x)][t,x] = [st,x]$ and the inverse is given by $[s,x]\inv = [s^*,\rho_s(x)]$.  The topology on $\mathscr G\skel 0$ is that of $X$, whereas a basis of neighborhoods for $\mathscr G\skel 1$ is given by the sets $(s,U)$, where $U\subseteq D_{s^*}$ is compact open, and \[(s,U)=\{[s,x]\mid x\in U\}.\]  Note that the above sets $(s,U)$ are compact open bisections and the compact open bisections of the form $U(s)=(s,D_{s^*})$ satisfy $U(s)U(t)=U(st)$ and hence form an inverse semigroup $\til S\leq \mathscr G^a$, which is a homomorphic image of $S$.  Note that $\til S$ satisfies the germ conditions.  Indeed, if $x\in D_{s^*}$, then $[s,x]\in (s,D_{s^*})$ and so (G1) holds.  If $\gamma\in (s,D_{s^*})\cap (t,D_{t^*})$, then $\gamma=[s,x]=[t,x]$ for some $x\in D_{s^*}\cap D_{t^*}$.  Then, by definition of the germ equivalence relation $\sim$, we have $u\in S$ with $u\leq s,t$ and $x\in D_{u^*}$.  Thus $\gamma=[u,x]\in (u,D_{u^*})\subseteq (s,D_{s^*})\cap (t,D_{t^*})$, yielding (G2).

Conversely, if $\mathscr G$ is an ample groupoid and $S\leq \mathscr G^a$ satisfies the germ conditions, then Exel showed that $\mathscr G\cong S\ltimes \mathscr G\skel 0$ with respect to the natural Boolean action defined above; see~\cite{Exel}.  This explains the name ``germ conditions.''

\section{Skew inverse semigroup rings}\label{issr}
By a \emph{partial automorphism} of a ring $A$, we mean a ring isomorphism $\varphi\colon I\to J$ between two-sided ideals $I,J$ of $A$.  The collection of all partial automorphisms of $A$ forms an inverse monoid that we denote $I_A$.  If $S$ is an inverse semigroup, then an \emph{action} of $S$ on $A$ is a homomorphism $\alpha\colon S\to I_A$, usually written $s\mapsto \alpha_s$.  The domain of $\alpha(s)$ is denoted $D_{s^*}$ and the range is then $D_s$.  We say that the action is \emph{non-degenerate} if \[\sum_{e\in E(S)} D_e=A,\] a condition that we shall assume from here on out. Notice that if $S$ has an identity, then non-degeneracy is just the assumption that the identity acts as the identity morphism.

 To ensure associativity of the skew inverse semigroup ring, we assume that each $D_s$ is a ring with local units (although weaker conditions suffice). Recall that a ring $R$ has \emph{local units} if $R=\bigcup_{e\in E} eRe$, where $E$ is a set of idempotents, and the union is directed.  We call $E$ a \emph{set of local units} for $R$.  Some authors require the set $E$ to commute, but that is not necessary although it will usually be the case in this paper. Note that $R=\varinjlim_{e\in E}eRe$ in the category of rings, but the inclusions are not unit preserving.  We shall say that $R$ has central local units if it has a set of local units consisting of central idempotents.  Of course every unital ring has central local units.

Given an action $\alpha$ of $S$ on a ring $A$,
the construction of the corresponding skew inverse semigroup ring is done in three steps.
\begin{enumerate}
	\item First we consider the set
\begin{equation}\label{eq:define.L}
	\mathcal{L} = \left\{ \sum_{s\in S}^{\text{finite}} a_s \delta_s \mid \ a_s \in D_s \right\}\cong \bigoplus_{s\in S} D_s
\end{equation}
where $\delta_s$, for $s\in S$, is a formal symbol (and $0\delta_s=0$).
We equip $\mathcal{L}$ with component-wise addition and with multiplication
defined as the linear extension of the rule
\[
	(a_s \delta_s)(b_t \delta_t) = \alpha_{s}(\alpha_{s^*}(a_s) b_t) \delta_{st}.
\]
(The reader will easily verify that $\alpha_{s}(\alpha_{s^*}(a_s) b_t) \in D_{st}$.)
	\item Then, we consider the ideal
	\begin{equation}\label{eq:define.N}
	\mathcal{N} = \langle a \delta_r - a \delta_s \mid r,s \in S, \ r \leq s \text{ and } a \in D_r \rangle,
\end{equation}
i.e., $\mathcal{N}$ is the ideal of $\mathcal{L}$ generated by all elements of the form $a\delta_r - a\delta_s$, where $r\leq s$ and $a \in D_r$.   It is shown in~\cite[Lemma~2.3]{BGOR2017}, that these elements already generate $\mathcal{N}$ as an additive group.
	\item Finally, we define the corresponding
	\emph{skew inverse semigroup ring}, which we denote by $A\rtimes S$, as the quotient ring $\mathcal{L}/\mathcal{N}$.
\end{enumerate}
If $S$ is a group, then the ideal $\mathcal N$ is the zero ideal and the multiplication simplifies to the rule $a\delta_s\cdot b\delta_t = a\alpha_s(b)\delta_{st}$, and so $A\rtimes S$ is the familiar skew group ring.

Recall that an ideal $I$ of a ring $A$ is a unital ring, in its own right, if and only if $I=Ae$ with $e$ a central idempotent of $A$.  Indeed, if $e$ is a central idempotent, then trivially $Ae=eAe=AeA=eA$ is a two-sided ideal with identity $e$.  Conversely, if $I$ is a two-sided ideal with identity $e$, then for all $a\in A$, we have that $ae,ea\in I$ and so $ae=e(ae)=(ea)e=ea$.  Thus $e$ is a central idempotent.  Trivially, we then have $I=Ae$.  Later on we shall use, without comment, that any central idempotent of $I$ is also a central idempotent of $A$.  Indeed, if $f\in I$ is a central idempotent and $a\in A$, then using that $f=fe=ef$ and $ea\in I$, we have that $fa=(fe)a=f(ea)=(ea)f=(ae)f = a(ef)=af$.

A key class of inverse semigroups actions is that of spectral actions, as defined below.
\begin{Def}[Spectral action] We call an action $\alpha$ of an inverse semigroup $S$ on $A$ \emph{spectral} if it is non-degenerate and $D_e$ has a unit element $1_e$ for each $e\in E(S)$.
\end{Def}
The term ``spectral'' is used because it turns out that such actions give a Boolean action of $S$ on the Pierce spectrum of $A$.   Note that $D_{ef}=D_e\cap D_f$ implies that $1_{ef}=1_e1_f$ and so $e\mapsto 1_e$ is a homomorphism from $E(S)$ into the central idempotents of $A$.  Also, if $e\leq s^*s$, and so $D_e\subseteq D_{s^*s} = D_{s^*}$, then $\alpha_s(D_e) = \alpha_{se}(D_e)=\alpha_{se}(D_{(se)^*}) = D_{se} = D_{ses^*}$ and so $\alpha_s(1_e) = 1_{ses^*}$.

For example, if $S$ is an inverse semigroup with a Boolean action on a generalized Stone space $X$ and if $R$ is a commutative ring with unit, then we can define an action of $S$ on the ring $A=C_c(X,R)$ of compactly supported locally constant mappings $f\colon X\to R$ with pointwise operations as follows.  If $\theta\colon S\to I_X$ is the homomorphism, say $\theta_s\colon X_{s^*}\to X_s$, we let $D_s$ be  the the set of mappings $f\in A$ supported on $X_s$. Note that $D_s=A\chi_{X_s}$ and $\chi_{X_s}$ is a central idempotent.  The action is given by
\[\alpha_s(f)(x) = \begin{cases} f(\theta_{s^*}(x)), & \text{if}\ x\in X_s\\ 0 & \text{else.}\end{cases}\]  The skew inverse semigroup ring $A\rtimes S$ turns out to be isomorphic to the Steinberg algebra of the groupoid of germs $S\ltimes X$ with coefficients in $R$~\cite{Demeneghi, BC}.

If $A$ is an algebra, then $E(A)$ will denote the idempotents of $A$.  Note that $E(Z(A))$ is a generalized Boolean algebra with $e\wedge f = ef$, $e\setminus f= e-ef$ and $e\vee f = e+f-ef= e\setminus f+f\setminus e+e\wedge f$.  Let $B$ be the generalized Boolean algebra generated by the $\{1_e\mid e\in E(S)\}$.  Then our assumption that $\alpha$ is non-degenerate implies $A=\varinjlim_{e\in B} eAe=\bigcup_{e\in B} eAe$ and, hence, has local units belonging to the center of $A$.  This follows since $D(e_1)+\cdots + D(e_n)= A(e_1\vee \cdots\vee e_n)$.

\begin{Rmk}\label{r:short.form}
In the case of a spectral action, we note that if $a\in D_s$ and $b\in D_t$, then
\begin{align*}
(a\delta_s)(b\delta_t) & = \alpha_s(\alpha_{s^*}(a)b)\delta_{st}=\alpha_s(\alpha_{s^*}(a)b1_{s^*s})\delta_{st} = \alpha_{ss^*}(a)\alpha_s(b1_{s^*s})\delta_{st} \\ & = a\alpha_s(b1_{s^*s})\delta_{st}
\end{align*}
 which may look a bit more like the familiar formula for a skew group ring.
\end{Rmk}

The theory of covariant representations for  partial inverse semigroup rings was first developed in ~\cite{Vivitese} and~\cite{LCtese}. For completeness we present the main results (and some proofs) below, already adapted to our context.   We retain the notation of \eqref{eq:define.L} and \eqref{eq:define.N}.

\begin{Prop}\label{p:embed}
Let $S$ have a spectral action $\alpha$ on the ring $A$.  Then there is an embedding $\Theta\colon A\to A\rtimes S$ and a homomorphism $\Phi\colon S\to A\rtimes S$ given by \[\Theta(a) = a_1\delta_{e_1}+\cdots+a_n\delta_{e_n}+\mathcal N\] where $a=a_1+\cdots +a_n$ with $a_i\in D_{e_i}$ and \[\Phi(s) = 1_{ss^*}\delta_s+\mathcal N.\]  Moreover, $A\rtimes S$ has a set of local units contained in $\Theta(B)$ where $B$ is the generalized Boolean algebra generated by $\{1_e\mid e\in E(S)\}$.  Furthermore,  $\Theta(\alpha_s(a)) = \Phi(s)\Theta(a)\Phi(s^*)$ for all $s\in S$ and $a\in D_{s^*}$ and, for $e\in E(S)$, we have that $\Theta(1_e) = \Phi(e)$.
\end{Prop}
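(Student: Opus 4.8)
The plan is to verify each asserted property by direct computation in $\mathcal L$ and then descend to the quotient $A\rtimes S = \mathcal L/\mathcal N$, taking care throughout to exploit the spectral hypothesis via Remark~\ref{r:short.form} and the identities $\alpha_s(1_e) = 1_{ses^*}$ and $1_{ef} = 1_e 1_f$. First I would check that $\Theta$ is well defined: if $a = a_1 + \cdots + a_n = b_1 + \cdots + b_m$ are two decompositions with $a_i \in D_{e_i}$, $b_j \in D_{f_j}$, then using $D_e = A 1_e$ and the relations $a 1_e \delta_e - a 1_e \delta_{ef} \in \mathcal N$ (since $ef \le e$ and $a 1_e 1_f \in D_{ef}$) one can refine both decompositions to a common one indexed by the meets $e_i f_j$, so both sums agree modulo $\mathcal N$; here the generalized Boolean algebra $B$ of Proposition~\ref{p:embed} is the natural indexing gadget, since $D(e_1) + \cdots + D(e_n) = A(e_1 \vee \cdots \vee e_n)$. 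Additivity of $\Theta$ is then immediate, and multiplicativity follows from $\Theta(ab)$: writing $a \in D_e$, $b \in D_f$ one computes $(a\delta_e)(b\delta_f) = \alpha_e(\alpha_e(a)b)\delta_{ef} = ab\,\delta_{ef}$ (using $\alpha_e = \mathrm{id}$ on $D_e$ and $ab \in D_e \cap D_f = D_{ef}$), which equals $ab\,\delta_{ef} \equiv ab\,\delta_{e} \pmod{\mathcal N}$ when restricted appropriately; summing over the decompositions gives $\Theta(a)\Theta(b) = \Theta(ab)$.

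Next I would treat $\Phi$. That $\Phi$ is a homomorphism of inverse semigroups reduces, by the criterion recalled in Section~\ref{ample groupoids} (a map is a homomorphism iff it is order preserving, restricts to a homomorphism on idempotents, and is multiplicative on pairs $s,t$ with $s^*s = tt^*$), to three checks: (i) on idempotents, $\Phi(e) = 1_e \delta_e$ and $\Phi(e)\Phi(f) = \alpha_e(\alpha_e(1_e)1_f)\delta_{ef} = 1_e 1_f \delta_{ef} = 1_{ef}\delta_{ef} = \Phi(ef)$; (ii) order preservation, $r \le s$ means $1_{rr^*}\delta_r \equiv 1_{rr^*}\delta_s \pmod{\mathcal N}$, so in fact $\Phi(r) \le \Phi(s)$ in $A \rtimes S$ provided one checks $\Phi(r) = \Phi(r)\Phi(r^*)\Phi(s)$, which is the relation $1_{rr^*}\delta_r - 1_{rr^*}\delta_s \in \mathcal N$ (note $1_{rr^*} \in D_r$ since $rr^* \le$ ... actually $1_{rr^*}$ is the identity of $D_{rr^*} = D_r$, and $r \le s$ with $1_{rr^*} \in D_r$ is exactly a generator of $\mathcal N$); (iii) for $s^*s = tt^*$, compute $\Phi(s)\Phi(t) = \alpha_s(\alpha_{s^*}(1_{ss^*})1_{tt^*})\delta_{st} = \alpha_s(1_{s^*s}1_{tt^*})\delta_{st} = \alpha_s(1_{s^*s})\delta_{st} = 1_{ss^*}\delta_{st}$ (using $s^*s = tt^*$ so $1_{s^*s}1_{tt^*} = 1_{s^*s}$, and $\alpha_s(1_{s^*s}) = 1_{ss^*}$), while $\Phi(st) = 1_{(st)(st)^*}\delta_{st} = 1_{stt^*s^*}\delta_{st} = 1_{ss^*s s^*}\delta_{st} = 1_{ss^*}\delta_{st}$, matching.

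For the covariance relation $\Theta(\alpha_s(a)) = \Phi(s)\Theta(a)\Phi(s^*)$ with $a \in D_{s^*}$, I would first reduce to the case $a \in D_e$ with $e \le s^*s$ by writing $a = \sum a_i$ with $a_i \in D_{e_i}$ and replacing each $e_i$ by $e_i s^*s \le s^*s$ (harmless since $D_{e_i} \subseteq D_{s^*s}$ forces $a_i = a_i 1_{s^*s} \in D_{e_i s^*s}$ modulo $\mathcal N$ using generators of $\mathcal N$). For such an $a \in D_e$, $\Theta(a) = a\delta_e \pmod{\mathcal N}$, and one computes $\Phi(s)(a\delta_e)\Phi(s^*) = (1_{ss^*}\delta_s)(a\delta_e)(1_{s^*s}\delta_{s^*})$; the first product is $\alpha_s(\alpha_{s^*}(1_{ss^*})a)\delta_{se} = \alpha_s(1_{s^*s}a)\delta_{se} = \alpha_s(a)\delta_{se}$, and multiplying on the right by $1_{s^*s}\delta_{s^*}$ gives $\alpha_{se}(\alpha_{(se)^*}(\alpha_s(a))1_{s^*s})\delta_{ses^*} = \alpha_s(a)\delta_{ses^*}$ after simplification (using Remark~\ref{r:short.form} and $\alpha_s(a) \in D_{ses^*}$), which equals $\Theta(\alpha_s(a))$ since $\alpha_s(a) \in D_{ses^*}$ and $ses^* \le ss^*$. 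The special case $\Theta(1_e) = \Phi(e)$ is immediate: $\Theta(1_e) = 1_e\delta_e = \Phi(e)$. Finally, injectivity of $\Theta$ and the local units claim: $A \rtimes S$ having a set of local units inside $\Theta(B)$ follows because for any finitely many elements of $\mathcal L/\mathcal N$, their "coordinates" $a_s \in D_s$ are each fixed by some $1_e$ with $e \in B$, and $\Theta(1_e) = 1_e\delta_e$ acts as a left and right identity on $a_s\delta_s$ modulo $\mathcal N$ when $1_e$ dominates $1_{ss^*}$ (again a generator computation); taking the join of these $e$'s gives a single local unit in $\Theta(B)$. For injectivity, one builds a left inverse or uses that the canonical map $\mathcal L \to \bigoplus_s D_s$ composed with the projection to the "$e$-components" kills $\mathcal N$ in a controlled way — concretely, $\Theta(a) = 0$ means $\sum a_i\delta_{e_i} \in \mathcal N$, and applying the additive-generator description of $\mathcal N$ from~\cite[Lemma~2.3]{BGOR2017} together with a "total-sum over a common refinement" functional $\sum a_s\delta_s \mapsto \sum_s \alpha_s(\text{something})$ shows the common value $a = \sum a_i$ must be $0$.

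The main obstacle I expect is the well-definedness and injectivity of $\Theta$, i.e., controlling the ideal $\mathcal N$ precisely enough. The multiplication formula, the homomorphism property of $\Phi$, the covariance relation, and the identity $\Theta(1_e) = \Phi(e)$ are all essentially bookkeeping once Remark~\ref{r:short.form} and the relations $1_{ef} = 1_e1_f$, $\alpha_s(1_e) = 1_{ses^*}$ are in hand. But showing that two different decompositions $a = \sum a_i = \sum b_j$ give the same element of the quotient, and more delicately that $\Theta$ has trivial kernel, requires a clean normal-form argument for elements of $\mathcal L/\mathcal N$ supported on idempotents; the right tool is the additive generation of $\mathcal N$ from~\cite[Lemma~2.3]{BGOR2017} plus the fact that $B$ is directed, letting one refine to a common idempotent index and then read off coordinates. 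I would organize the proof so that this normal-form lemma is isolated and proved first, after which everything else falls out by the computations sketched above.
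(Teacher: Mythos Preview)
Your approach is correct in outline, but it diverges from the paper's in two respects worth flagging. First, the paper does not prove that $\Theta$ is a well-defined embedding at all: it simply cites~\cite[Proposition~3.1]{BGOR2017}, so your refinement-to-a-common-idempotent argument and your injectivity sketch are genuinely additional work (and your injectivity sketch is the vaguest part of the proposal --- you should either commit to building an explicit retraction $A\rtimes S\to A$ or invoke the cited result). Second, for $\Phi$ the paper bypasses the three-part criterion entirely and just computes $\Phi(s)\Phi(t)=\Phi(st)$ for \emph{arbitrary} $s,t\in S$ in one line: $(1_{ss^*}\delta_s)(1_{tt^*}\delta_t)=\alpha_s(1_{s^*s}1_{tt^*})\delta_{st}=\alpha_s(1_{s^*stt^*})\delta_{st}=1_{stt^*s^*}\delta_{st}=\Phi(st)$. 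Your invocation of the criterion from Section~\ref{ample groupoids} is formally inappropriate, since that criterion is stated for maps between inverse semigroups and $A\rtimes S$ is a ring (there is no natural partial order making ``$\Phi(r)\le\Phi(s)$'' meaningful); fortunately your computation in case~(iii) already works verbatim for general $s,t$, so the detour through order-preservation and idempotents is unnecessary. The covariance computation and the local-units argument in your proposal are essentially the same as the paper's, modulo your extra (harmless but unneeded) reduction to $e\le s^*s$ in the covariance step: the paper computes directly with $a\in D_{s^*}$ and $\Theta(a)=a\delta_{s^*s}+\mathcal N$.
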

\begin{proof}
It is easy to see that $\Phi$ is a homomorphism as
\begin{align*}
\Phi(s)\Phi(t) &= (1_{ss^*}\delta_s)(1_{tt^*}\delta_t)+\mathcal N = \alpha_s(\alpha_{s^*}(1_{ss^*})1_{tt^*})\delta_{st}+\mathcal N \\ & = \alpha_s(1_{s^*s}1_{tt^*})\delta_{st}+\mathcal N = \alpha_s(1_{s^*stt^*})\delta_{st}+\mathcal N \\ &= 1_{stt^*s^*}\delta_{st}+\mathcal N = \Phi(st).
\end{align*}
The fact that $\Theta$ is an embedding is proved in~\cite[Proposition~3.1]{BGOR2017}.

We compute that if $a\in D_{s^*}$, then
\begin{align*}
\Phi(s)\Theta(a)\Phi(s^*) &= (1_{ss^*}\delta_s)(a\delta_{s^*s})(1_{s^*s}\delta_{s^*})+\mathcal N \\ & = (\alpha_s(\alpha_{s^*}(1_{ss^*})a)\delta_s)(1_{s^*s}\delta_{s^*})+\mathcal N \\ &= (\alpha_s(a)\delta_s)(1_{s^*s}\delta_{s^*})+\mathcal N \\ & = \alpha_s(\alpha_{s^*s}(a)1_{s^*s})\delta_{ss^*}+\mathcal N \\ &= \alpha_s(a)\delta_{ss^*}+\mathcal N = \Theta(\alpha_s(a)).
\end{align*}

If $e\in E(S)$, then $\Theta(1_e) = 1_e\delta_e +\mathcal N= \Phi(e)$.

Since $B$ admits joins to show that $\theta(B)$ is a set of local units, it suffices to show that each element of the form $a\delta_s+\mathcal N$ belongs to $\Theta(e)(A\rtimes S)\Theta(e)$ for some $e\in B$.  First note that $\Theta(1_{ss^*})(a\delta_s+\mathcal N) = (1_{ss^*}\delta_{ss^*})(a\delta_s)+\mathcal N =a\delta_s+\mathcal N$ and $(a\delta_s+\mathcal N)\Theta(1_{s^*s})=(a\delta_s)(1_{s^*s}\delta_{s^*s})+\mathcal N  = a\delta_s+\mathcal N$.  It now follows that if $e=1_{s^*s}\vee 1_{ss^*}$, then $\Theta(e)(a\delta_s+\mathcal N)\Theta(e)=a\delta_s+\mathcal N$.
\end{proof}

The properties of the mappings $\Theta$ and $\Phi$ above are sufficiently important to give them a name.

\begin{Def}[Covariant system]
Let $S$ be an inverse semigroup with a spectral action $\alpha$ on a ring $A$.  Then a \emph{covariant system} for $(S,A,\alpha)$ consists of a triple $(R,\theta,\p)$ where $R$ is a ring and $\theta\colon A\to R$, $\p\colon S\to R$ are homomorphisms such that:
\begin{itemize}
\item [(C1)] $\theta(\alpha_s(a)) = \p(s)\theta(a)\p(s^*)$ for all $a\in D_{s^*}$;
\item [(C2)] $\theta(1_e) = \p(e)$ for $e\in E(S)$.
\end{itemize}
\end{Def}

For example, $(A\times S, \Theta,\Phi)$ is  a covariant system.  It turns out to be the universal covariant system.

\begin{Thm}\label{t:adjunction}
Let $S$ be an inverse semigroup with a spectral action $\alpha$ on a ring $A$.  Let $R$ be a ring.  Then there is a bijection between ring homomorphisms $\pi\colon A\rtimes S\to R$ and covariant systems $(R,\theta,\p)$.  More precisely, if $\pi\colon A\rtimes S\to R$ is a ring homomorphism, then $(R,\pi\circ \Theta,\pi\circ \Phi)$ is a covariant system.  Conversely, if $(R,\theta,\p)$ is a covariant system, then there is a unique homomorphism $\pi=\theta\rtimes \p\colon A\rtimes S\to R$ such that
\[\begin{tikzcd}
A\ar{rr}{\Theta}\ar{rd}[swap]{\theta} & & A\rtimes S\ar{ld}{\pi} &\text{and} & S\ar{rr}{\Phi}\ar{rd}[swap]{\p} & & A\rtimes S\ar{ld}{\pi}\\
                                & R&                        &&                          &R&
\end{tikzcd}\]
commute.
\end{Thm}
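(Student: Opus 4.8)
The plan is to establish the bijection by showing the two assignments described in the statement are mutually inverse. In one direction we start from a ring homomorphism $\pi\colon A\rtimes S\to R$; then $\pi\circ\Theta$ and $\pi\circ\Phi$ are composites of homomorphisms, hence homomorphisms, and properties (C1), (C2) for the pair $(\pi\circ\Theta,\pi\circ\Phi)$ follow immediately by applying $\pi$ to the identities $\Theta(\alpha_s(a))=\Phi(s)\Theta(a)\Phi(s^*)$ and $\Theta(1_e)=\Phi(e)$ already proved in Proposition~\ref{p:embed}. So this direction is essentially free.

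For the converse, given a covariant system $(R,\theta,\p)$, the natural candidate is to define $\pi$ on the generators by $\pi(a\delta_s+\mathcal N)=\theta(a)\p(s)$ (or equivalently, using Remark~\ref{r:short.form}, $\p(s)\theta(\alpha_{s^*}(a))$ once one has checked these agree). First I would define a map $\widetilde\pi\colon\mathcal L\to R$ on the free object $\mathcal L=\bigoplus_{s\in S}D_s$ by $\widetilde\pi(a_s\delta_s)=\theta(a_s)\p(s)$, extended additively; this is well defined and additive by construction. The first real task is to check $\widetilde\pi$ is multiplicative, i.e. $\theta(\alpha_s(\alpha_{s^*}(a)b))\p(st)=\theta(a)\p(s)\theta(b)\p(t)$ for $a\in D_s$, $b\in D_t$. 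Here one writes $\p(s)\theta(b)=\p(s)\theta(b)\p(s^*)\p(s)$ using $\p(s^*)\p(s)=\p(s^*s)=\theta(1_{s^*s})$ acting as a unit on $\theta(D_{s^*s})\ni\theta(b)$ (note $b=b1_{s^*s}$ after replacing $b$ by $b1_{s^*s}$, which changes nothing in $D_{st}$), then applies (C1) to get $\theta(\alpha_s(b))$ (with the appropriate domain bookkeeping, $b\in D_{s^*}$), then collapses $\theta(a)\theta(\alpha_s(b))=\theta(a\alpha_s(b))$ and finally pushes through $\p(s)$ using (C1) again together with $\p(s)\p(t)=\p(st)$. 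The second task is to verify $\widetilde\pi$ kills $\mathcal N$: for $r\le s$ and $a\in D_r$ we need $\theta(a)\p(r)=\theta(a)\p(s)$, which follows because $r\le s$ gives $r=rr^*s$ in $S$, hence $\p(r)=\p(r)\p(r^*)\p(s)=\p(rr^*)\p(s)=\theta(1_{rr^*})\p(s)$, and $a\in D_r\subseteq D_{rr^*}$ gives $\theta(a)\theta(1_{rr^*})=\theta(a)$; since $\mathcal N$ is additively generated by such elements (cited from~\cite{BGOR2017}) and $\widetilde\pi$ is additive, $\widetilde\pi(\mathcal N)=0$. Thus $\widetilde\pi$ factors through a homomorphism $\pi\colon A\rtimes S\to R$.

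Next I would check the two triangles commute: $\pi\circ\Phi(s)=\widetilde\pi(1_{ss^*}\delta_s)=\theta(1_{ss^*})\p(s)=\p(ss^*)\p(s)=\p(s)$, and $\pi\circ\Theta(a)=\sum_i\theta(a_i)\p(e_i)=\sum_i\theta(a_i)\theta(1_{e_i})=\sum_i\theta(a_i)=\theta(a)$, using $a_i\in D_{e_i}$. Uniqueness of $\pi$ is forced because $A\rtimes S$ is generated as a ring by $\Theta(A)$ and $\Phi(S)$: indeed every $a\delta_s+\mathcal N$ equals $\Theta(a)\Phi(s)$ (apply $\pi$ to this, or verify directly $\Theta(a)\Phi(s)=\widetilde\pi$-preimage), so any homomorphism making both triangles commute is determined on generators. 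Finally I would note that the two constructions are mutually inverse: starting from $\pi$, forming $(\pi\Theta,\pi\Phi)$, and then reconstructing $\theta\rtimes\p$ recovers $\pi$ by the uniqueness clause; starting from $(R,\theta,\p)$, forming $\pi=\theta\rtimes\p$, the commuting triangles say $\pi\circ\Theta=\theta$ and $\pi\circ\Phi=\p$, so we get back the original covariant system.

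The main obstacle is the multiplicativity check for $\widetilde\pi$: it requires threading the idempotents $1_{s^*s}$ through correctly so that (C1) and (C2) can be invoked on the right domains, and being careful that all the intermediate elements lie in the ideals where $\theta$ is being evaluated (so that the "unit" idempotents genuinely act as identities). The reduction to the short form in Remark~\ref{r:short.form} makes this cleaner: with $(a\delta_s)(b\delta_t)=a\alpha_s(b1_{s^*s})\delta_{st}$ one only needs $\theta\bigl(a\alpha_s(b1_{s^*s})\bigr)\p(st)=\theta(a)\p(s)\theta(b)\p(t)$, and the right side rewrites as $\theta(a)\p(s)\theta(b)\theta(1_{s^*s})\p(s^*)\p(s)\p(t)=\theta(a)\p(s)\theta(b1_{s^*s})\p(s^*)\p(st)=\theta(a)\theta(\alpha_s(b1_{s^*s}))\p(st)$ by (C1), and then $\theta(a)\theta(\alpha_s(b1_{s^*s}))=\theta\bigl(a\alpha_s(b1_{s^*s})\bigr)$, which closes the argument. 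Everything else is bookkeeping of the kind already rehearsed in the proof of Proposition~\ref{p:embed}.
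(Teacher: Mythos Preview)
Your proposal is correct and follows essentially the same approach as the paper: define $\rho\colon\mathcal L\to R$ by $a\delta_s\mapsto\theta(a)\p(s)$, verify multiplicativity by threading the idempotent $1_{s^*s}$ through via (C1) and (C2), then check $\mathcal N\subseteq\ker\rho$ using $r=rr^*s$ and the citation to~\cite{BGOR2017}. The only cosmetic differences are that the paper works directly with the product formula $\alpha_s(\alpha_{s^*}(a)b)\delta_{st}$ (inserting $\p(s)\p(s^*)$ on the \emph{left} of $\theta(a)$ first), whereas you go through the short form of Remark~\ref{r:short.form}, and you spell out the commuting triangles and the mutual-inverse check more explicitly than the paper does.
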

\begin{proof}
It is clear from Proposition~\ref{p:embed} that if $\pi\colon A\rtimes S\to R$ is a homomorphism, then $(R,\pi\circ \Theta,\pi\circ \Phi)$ is a covariant system.  Suppose next that $(R,\theta,\p)$ is a covariant system.  Note that if $a\in D_s$, then $a\delta_s+\mathcal N = (a\delta_{ss^*})(1_{ss^*}\delta_s)+\mathcal N=\Theta(a)\Phi(s)$.  It follows that if $\pi\colon A\rtimes S\to R$ is a homomorphism with $\pi\circ \Theta=\theta$ and $\pi\circ \Phi=\p$, then we must define $\pi(a\delta_s+\mathcal N) = \theta(a)\p(s)$ for $a\in D_s$.  We need to show that this, in fact, gives a well defined ring homomorphism.

First we show that $\rho\colon \mathcal L\to R$ given by $\rho(a\delta_s) = \theta(a)\p(s)$ for $a\in D_s$ is a ring homomorphism.  Indeed, we have that $(a\delta_s)(b\delta_t) = \alpha_s(\alpha_{s^*}(a)b)\delta_{st}$ so we need to check that $\theta(a)\p(s)\theta(b)\p(t) = \theta(\alpha_s(\alpha_{s^*}(a)b))\p(st)$.    First we observe that $\theta(a) = \theta(1_{ss^*}a) = \theta(1_{ss^*})\theta(a) = \p(ss^*)\theta(a) = \p(s)\p(s^*)\theta(a)$.  Thus we have
\begin{align*}
\theta(a)\p(s)\theta(b)\p(t) & = \p(s)\p(s^*)\theta(a)\p(s)\theta(b)\p(t) = \p(s)\theta(\alpha_{s^*}(a))\theta(b)\p(t)\\ &=\p(s)\theta(\alpha_{s^*}(a)b)\p(t).
\end{align*}
Since $\alpha_{s^*}(a)b\in D_{s^*}\cap D_t\subseteq D_{s^*} = D_{s^*s}$, we have that $\alpha_{s^*}(a)b=\alpha_{s^*}(a)b1_{s^*s}$ and so $\theta(a)\p(s)\theta(b)\p(t)$ equals
\begin{align*}
\p(s)\theta(\alpha_{s^*}(a)b)\p(t) &=\p(s)\theta(\alpha_{s^*}(a)b)\theta(1_{s^*s})\p(t) =\p(s)\theta(\alpha_{s^*}(a)b)\p(s^*s)\p(t) \\ &= \p(s)\theta(\alpha_{s^*}(a)b)\p(s^*)\p(st)
=\theta(\alpha_s(\alpha_{s^*}(a)b))\p(st)
\end{align*}
  as required.

 Now we must show that $\mathcal N\subseteq \ker \rho$.  Let $a\in D_s$ with $s\leq r$ in $S$.  Then $s=ss^*r$ and we need to show that $\rho(a\delta_s) = \rho(a\delta_r)$.  But then $\rho(a\delta_s) = \theta(a)\p(s) =\theta(a)\p(ss^*r) = \theta(a)\p(ss^*)\p(r) = \theta(a)\theta(1_{ss^*})\p(r) = \theta(a1_{ss^*})\p(r)  =\theta(a)\p(r) = \rho(a\delta_r)$ as $a\in D_s=D_{ss^*}$.  This completes the proof.
\end{proof}

\begin{Rmk} The proof of the above result in the context of partial inverse semigroup actions can be found in~\cite[Theorem~1.6.19]{Vivitese} and~\cite[Theorem~4.3.15]{LCtese}.
\end{Rmk}

\section{Ample groupoid convolution algebras with coefficients in a sheaf of rings}\label{s:sheaf.coeff}
In this section we introduce a convolution algebra associated to a sheaf of rings over an ample groupoid.  When the sheaf in question is a constant sheaf of commutative rings, we recover the so-called Steinberg algebra introduced by the second author in~\cite{Steinbergalgebra}.  We shall see later that such convolution algebras are skew inverse semigroup rings and that, conversely, a large class of skew inverse semigroup rings are of this form.  We will show  that modules for the skew inverse semigroup ring can be identified with sheaves of modules over the sheaf of rings.  This will allow the geometric approach to modules initiated in~\cite{groupoidbundles}, and further studied in~\cite{groupoidprimitive,gcrgroupoid,EffrosHahngpd}, to be applied to skew inverse semigroup rings.

Let $\mathscr G$ be an ample groupoid.  Then a \emph{$\mathscr G$-sheaf} $\mathcal E$ consists of a topological space $E$, a local homeomorphism $p\colon E\to \mathscr G\skel 0$ and a continuous map $\alpha\colon \mathscr G\skel 1\times_{\dom,p} E\to E$ (written $(\gamma,e)\mapsto \alpha_{\gamma}(e)$) satisfying the following axioms:
\begin{itemize}
\item [(S1)] $\alpha_{p(e)}(e) = e$;
\item [(S2)] $p(\alpha_{\gamma}(e))=\ran(\gamma)$ if $\dom(\gamma)=p(e)$;
\item [(S3)] $\alpha_{\beta}(\alpha_{\gamma}(e)) = \alpha_{\beta\gamma}(e)$ whenever $\dom(\beta)=\ran(\gamma)$ and $\dom(\gamma)=p(e)$.
\end{itemize}								
If $x\in \mathscr G\skel 0$, then $\mathcal E_x=p^{-1}(x)$ is called the \emph{stalk} of $\mathcal E$ at $x$.  Notice that $\alpha_{\gamma}\colon \mathcal E_{\dom(\gamma)}\to \mathcal E_{\ran(\gamma)}$ is a bijection with inverse $\alpha_{\gamma\inv}$.    The assignment $x\to \mathcal E_x$ is a functor from $\mathscr G$ to the category of sets, and so a $\mathscr G$-sheaf is the topological analogue of such a functor.

A morphism of $\mathscr G$-sheaves $\mathcal E=(E,p,\alpha)$ and $\mathcal F=(F,q,\beta)$ is a continuous mapping $h\colon E\to F$ such that
\[\begin{tikzcd}
E\arrow{rd}[swap]{p}\arrow{rr}{h}& & F\arrow{ld}{q}\\ &\mathscr G\skel 0 &
\end{tikzcd}\]
commutes and $h(\alpha_{\gamma}(e)) = \beta_{\gamma}(h(e))$ for all $e\in E$ and $\gamma\in \mathscr G\skel 1$ with $\dom(\gamma) = p(e)$. Note that $h$ is automatically a local homeomorphism.

We shall be interested in sheaves with extra structure.  A \emph{$\mathscr G$-sheaf of (unital) rings} is a $\mathscr G$-sheaf $\mathcal O=(E,p,\alpha)$ equipped with a unital ring structure on each stalk $\mathcal O_x$ such that the following axioms hold:
\begin{itemize}
\item [(SR1)] $+\colon E\times_{p,p} E\to E$ is continuous;
\item [(SR2)] $\cdot\colon E\times_{p,p} E\to E$ is continuous;
\item [(SR3)] the unit section $x\mapsto 1_x$ is a continuous mapping $\mathscr G\skel 0\to E$;
\item [(SR4)] $\alpha_{\gamma}\colon \mathcal O_{\dom(\gamma)}\to \mathcal O_{\ran(\gamma)}$ is a ring homomorphism for all $\gamma\in \mathscr G\skel 1$.
\end{itemize}

Notice that $x\mapsto \mathcal O_x$ is a functor from $\mathscr G$ to the category of unital rings, so a $\mathscr G$-sheaf is a topological analogue of such a functor. Note that each $\alpha_{\gamma}$ is a ring isomorphism. One can prove that the zero section $x\mapsto 0_x$ is continuous and that the negation map is continuous (these are standard facts about sheaves of abelian groups, and hence rings, over spaces,~cf.~\cite{Dowker}).

\begin{Example}\label{ex:constant.sheaf}
A key example is the following.  Let $R$ be any unital ring, which we view as a space with the discrete topology.  We define the \emph{constant sheaf} of rings $\Delta(R)$ to be the $\mathscr G$-sheaf of rings with $E=R\times \mathscr G\skel 0$ and with $p\colon R\times \mathscr G\skel 0\to\mathscr G\skel 0$ the projection.  The addition and multiplication are pointwise, that is, $(r,x)+(r',x) = (r+r',x)$ and $(r,x)(r',x) = (rr',x)$.  The mapping $\alpha$ is given by $\alpha(\gamma)(r,\dom(\gamma)) = (r,\ran(\gamma))$.  It will turn out that the convolution algebra we associate to $\Delta(R)$ will be the usual algebra of $\mathscr G$ over $R$, from~\cite{Steinbergalgebra}, without the restriction on $R$ being commutative.  Viewed as a functor to the category of rings, $\Delta(R)$ takes each object of $\mathscr G$ to $R$ and each arrow to the identity map on $R$.
\end{Example}

\begin{Example}
If $G$ is a discrete group, viewed as a one-object ample groupoid, then a $G$-sheaf of rings is nothing more than a unital ring $A$ together with an action of $G$ on $A$ by automorphisms.  The convolution algebra that we shall define reduces in this case to the skew group ring $A\rtimes G$.
\end{Example}

We now aim to define the \emph{ring of global sections of $\mathcal O$ with compact support}, which we shall also call the \emph{convolution algebra of $\mathscr G$ with coefficients in the sheaf of rings $\mathcal O$}.   Because we do not assume that $\mathscr G$ is Hausdorff, defining the algebra correctly involves some subtleties.  Since our groupoids are ample, we can take some short cuts.

Let $\mathcal O=(E,p,\alpha,+,\cdot)$ be a $\mathscr G$-sheaf of rings.  Let $A(\mathscr G,\mathcal O)$ be the set of all mappings $f\colon \mathscr G\skel 1\to E$ such that $p\circ f=\ran$, that is, $f(\gamma)\in \mathcal O_{\ran(\gamma)}$ for all $\gamma\in \mathscr G\skel 1$.  We can define a binary operation on $A(\mathscr G,\mathcal O)$ by putting $(f+g)(\gamma)= f(\gamma)+g(\gamma)$, which we refer to as pointwise addition.   We shall denote by $0$ the mapping $0(\gamma) = 0_{\ran(\gamma)}$ for all $\gamma\in \Gamma$.  We can identify $A(\mathscr G,\mathcal O)$ with  $\prod_{\gamma\in \mathscr G\skel 1}\mathcal O_{\ran(\gamma)}$ and with this identification, pointwise addition becomes the usual addition in a direct product.  Hence we may conclude the following.

\begin{Prop}\label{p:first.ab.group}
The set $A(\mathscr G,\mathcal O)$ is an abelian group with respect to pointwise addition with $0$ as the identity and $(-f)(\gamma) = -f(\gamma)$ for $\gamma\in \mathscr G\skel 1$.
\end{Prop}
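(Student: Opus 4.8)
The plan is to observe that Proposition~\ref{p:first.ab.group} is essentially a restatement of a standard fact once we unwind the identification mentioned just before it, so the proof should be a short argument pinning down that identification and invoking it.

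First I would make the bijection $A(\mathscr G,\mathcal O)\cong \prod_{\gamma\in \mathscr G\skel 1}\mathcal O_{\ran(\gamma)}$ completely explicit: a mapping $f\colon \mathscr G\skel 1\to E$ with $p\circ f=\ran$ is precisely a choice, for each $\gamma\in\mathscr G\skel 1$, of an element $f(\gamma)\in p\inv(\ran(\gamma))=\mathcal O_{\ran(\gamma)}$, since by axiom (S2)/(SR4) each stalk is the fibre of $p$ over the corresponding unit; and this assignment $f\mapsto (f(\gamma))_{\gamma}$ is clearly a bijection onto the set-theoretic product of the stalks, with inverse sending a tuple to the evidently well-defined mapping it names. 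Next I would note that each stalk $\mathcal O_{\ran(\gamma)}$ is an abelian group under its ring addition (part of the $\mathscr G$-sheaf-of-rings data), so $\prod_{\gamma}\mathcal O_{\ran(\gamma)}$ is an abelian group under coordinatewise addition, with identity the tuple $(0_{\ran(\gamma)})_{\gamma}$ and with the negative of $(a_\gamma)_\gamma$ being $(-a_\gamma)_\gamma$. Finally, I would check that under the bijection above, pointwise addition of mappings corresponds exactly to coordinatewise addition of tuples — immediate from the definitions $(f+g)(\gamma)=f(\gamma)+g(\gamma)$ — that the mapping $0$ with $0(\gamma)=0_{\ran(\gamma)}$ corresponds to the identity tuple, and that $(-f)(\gamma)=-f(\gamma)$ corresponds to the coordinatewise negative. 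Transporting the abelian group structure along the bijection then yields the claim.

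The argument is entirely routine; there is essentially no obstacle. The only point requiring a word of care is the very first one — that $f(\gamma)$ genuinely lands in the abelian group $\mathcal O_{\ran(\gamma)}$ and not in some larger ambiguous set — but this is exactly the defining condition $p\circ f=\ran$ together with the definition of the stalk as $p\inv(x)$, so it is immediate. I would therefore keep the proof to one or two sentences, simply recording that $A(\mathscr G,\mathcal O)$, via $f\mapsto(f(\gamma))_\gamma$, is the direct product of the abelian groups $\mathcal O_{\ran(\gamma)}$ and that pointwise addition, the section $0$, and $-f$ correspond under this identification to the product abelian group operations, its identity, and inverses respectively. No non-Hausdorffness issue enters here, since we are only using the underlying sets of the stalks and of $\mathscr G\skel 1$; topology plays no role in this particular statement.
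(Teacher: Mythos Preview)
Your proposal is correct and matches the paper's approach exactly: the paper does not give a formal proof at all, but simply records in the sentence preceding the proposition that $A(\mathscr G,\mathcal O)$ can be identified with $\prod_{\gamma\in\mathscr G\skel 1}\mathcal O_{\ran(\gamma)}$ so that pointwise addition becomes the direct product addition, and then states the proposition as an immediate consequence. Your slightly more expanded version spells out this identification and the transport of structure, which is precisely what the paper leaves implicit.
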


We define, as an abelian group, $\Gamma_c(\mathscr G,\mathcal O)$ to be the subgroup generated by all mappings $f\in A(\mathscr G,\mathcal O)$ such that there is a compact open bisection $U$ with $f|_U$ continuous and $f|_{\mathscr G\skel 1\setminus U} = 0$.  In this case, we say that $f$ is \emph{supported} on $U$. If $U$ is a compact open bisection and $s\colon \ran(U)\to E$ is any (continuous) section of $p$, then we can define an element $s\chi_U\in \Gamma_c(\mathscr G,\mathcal O)$, supported on $U$, by
\[(s\chi_U)(\gamma) = \begin{cases} s(\ran(\gamma)), & \text{if}\ \gamma\in U\\ 0_{\ran(\gamma)}, & \text{else.}\end{cases} \]  In the special case that $s$ is the unit section $x\mapsto 1_x$ over $U$, we denote $s\chi_U$ by simply $\chi_U$.  In other words,
\[\chi_U(\gamma) = \begin{cases} 1_{\ran(\gamma)}, & \text{if}\ \gamma\in U\\ 0_{\ran(\gamma)}, & \text{else.}\end{cases}\]

Notice that if $f\in \Gamma_c(\mathscr G,\mathcal O)$ is supported on a compact open bisection $U$, then $f=s\chi_U$ where $s=f\circ (\ran|_U)\inv$.  Thus $\Gamma_c(\mathscr G,\mathcal O)$ can also be described as the abelian group generated by all elements of the form $s\chi_U$ where $s\colon \ran(U)\to E$ is a section, and $U$ is a compact open bisection.

The reader should verify that if $\mathscr G$ is Hausdorff, then $\Gamma_c(\mathscr G,\mathcal O)$ consists of all continuous mappings $f\colon \mathscr G\skel 1\to E$ such that $p\circ f= \ran$ and $f$ has  compact support (i.e., $\{\gamma\mid f(\gamma)\neq 0_{\ran(\gamma)}\}$ is compact).

\begin{Example}
Let $R$ be a unital ring.  Then $A(\mathscr G,\Delta(R))$ can be identified with $R^{\mathscr G\skel 1}$ by sending $f\in R^{\mathscr G\skel 1}$  to the mapping in $A(\mathscr G,\Delta(R))$ given by $F(\gamma) = (f(\gamma),\ran(\gamma))$.  Under this identification,   $\Gamma_c(\mathscr G,\Delta(R))$ corresponds to the $R$-submodule of $R^{\mathscr G\skel 1}$ spanned by the characteristic functions $\chi_U$ with $U$ a compact open bisection.  In particular, if $\mathscr G$ is Hausdorff, it can be identified with the locally constant functions $\mathscr G\skel 1\to R$ with compact support.
\end{Example}

\begin{Example}
If $G$ is a discrete group, viewed as a one-object ample groupoid, acting on a unital ring $A$ (viewed as a sheaf of rings), then the additive group $\Gamma_c(G,A)$ is the group of finitely supported functions $G\to A$ with pointwise addition.
\end{Example}

A crucial property of elements of $\Gamma_c(\mathscr G,\mathcal O)$ is that they can only be non-zero on finitely many points of any fiber of $\dom$ or $\ran$.

\begin{Prop}\label{p:finiteness.prop}
Let $f\in \Gamma_c(\mathscr G,\mathcal O)$ and $x\in \mathscr G\skel 0$.  Then there are only finitely many $\gamma\in \dom\inv (x)$ such that $f(\gamma)\neq 0$ and, similarly, for $\ran\inv(x)$.
\end{Prop}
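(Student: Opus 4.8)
The plan is to reduce the statement to the generators of $\Gamma_c(\mathscr G,\mathcal O)$. By construction, $\Gamma_c(\mathscr G,\mathcal O)$ is generated as an abelian group by those $f\in A(\mathscr G,\mathcal O)$ that are supported on a single compact open bisection, so I would begin by writing $f=\sum_{i=1}^n f_i$, where each $f_i$ is supported on a compact open bisection $U_i$ (i.e.\ $f_i|_{U_i}$ is continuous and $f_i$ vanishes off $U_i$).

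The key observation is a triviality about bisections. For a single generator $f_i$, if $f_i(\gamma)\neq 0$ then necessarily $\gamma\in U_i$, since $f_i$ is identically $0$ on $\mathscr G\skel 1\setminus U_i$. As $U_i$ is a bisection, the restrictions $\dom|_{U_i}$ and $\ran|_{U_i}$ are injective; hence $U_i\cap \dom\inv(x)$ and $U_i\cap \ran\inv(x)$ each contain at most one point, for every $x\in\mathscr G\skel 0$. In particular, the set $\{\gamma\in\dom\inv(x)\mid f_i(\gamma)\neq 0\}$ has at most one element, and likewise for $\ran\inv(x)$.

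To finish, I would use that the support of a sum is contained in the union of the supports:
\[
\{\gamma\in\mathscr G\skel 1\mid f(\gamma)\neq 0\}\subseteq \bigcup_{i=1}^n \{\gamma\in\mathscr G\skel 1\mid f_i(\gamma)\neq 0\}\subseteq \bigcup_{i=1}^n U_i,
\]
so this set meets $\dom\inv(x)$ in at most $n$ points and meets $\ran\inv(x)$ in at most $n$ points. There is no real obstacle here; the only point requiring a moment's care is that passing from a generator to an arbitrary element of $\Gamma_c(\mathscr G,\mathcal O)$ must be done via this containment of supports rather than an equality, which is exactly what the finiteness conclusion tolerates.
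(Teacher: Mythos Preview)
Your proposal is correct and follows essentially the same argument as the paper: decompose $f$ as a finite sum of elements supported on compact open bisections, note that each such bisection meets any fiber of $\dom$ or $\ran$ in at most one point, and conclude that the sum can be nonzero on at most $n$ points of any fiber. The paper's proof is simply a terser version of what you wrote.
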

\begin{proof}
If $U$ is a compact open bisection and $s\colon \ran(U)\to E$ is a section, then $s\chi_U$ is non-zero on at most one element of $\dom\inv(x)$ (respectively, $\ran\inv(x)$).  Since any element of $\Gamma_c(\mathscr G,\mathcal O)$ is a finite sum of such elements, the proposition follows.
\end{proof}

We now define convolution of elements of $\Gamma_c(\mathscr G,\mathcal O)$.  The reader can then perform the straightforward verification that convolution on $\Gamma_c(\mathscr G,\Delta(R))$ is the familiar convolution from~\cite{Steinbergalgebra}.
If $f,g\in \Gamma_c(\mathscr G,\mathcal O)$ and $\gamma\in \mathscr G\skel 1$, we define their \emph{convolution} by
\begin{equation}\label{eq:define.conv}
f\ast g(\gamma) = \sum_{\beta\rho=\gamma}f(\beta) \alpha_{\beta}(g(\rho)).
\end{equation}

For example, if $G$ is a discrete group (viewed as a one-object ample groupoid) acting on a unital ring $A$ (which we view as a sheaf), then an element $f\in \Gamma_c(G,A)$ can be written as a finite formal sum $\sum_{g\in G}f(g)\delta_g$.  With this notation, the convolution product \eqref{eq:define.conv} becomes
\[\left(\sum_{g\in G}a_g\delta_g\right)\left(\sum_{g\in G}b_g\delta_g\right) = \sum_{hk=g}a_h\alpha_h(b_k)\delta_g\] and so $\Gamma_c(G,A)$ is the usual skew group ring.

We must show that \eqref{eq:define.conv} makes sense.

\begin{Prop}\label{p:well.def}
If $f,g\in \Gamma_c(\mathscr G,\mathcal O)$, then $f\ast g\in \Gamma_c(\mathscr G,\mathcal O)$.
\end{Prop}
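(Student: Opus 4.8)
The plan is to reduce to the generators of $\Gamma_c(\mathscr G,\mathcal O)$ and then, for a product of two generators, to exhibit an explicit compact open bisection carrying it together with a continuous section realizing it; the non-Hausdorff subtleties never intervene, because the product of two generators turns out to be, on the nose, another generator. First I would check that convolution makes sense and is biadditive. For a fixed $\gamma$, the decomposition $\gamma=\beta\rho$ forces $\beta\in\ran\inv(\ran(\gamma))$ and $\rho=\beta\inv\gamma$, so by Proposition~\ref{p:finiteness.prop} the sum in \eqref{eq:define.conv} has only finitely many nonzero terms; each such term $f(\beta)\alpha_\beta(g(\rho))$ lies in $\mathcal O_{\ran(\gamma)}$, since $f(\beta)\in\mathcal O_{\ran(\beta)}=\mathcal O_{\ran(\gamma)}$, $g(\rho)\in\mathcal O_{\ran(\rho)}=\mathcal O_{\dom(\beta)}$, and $\alpha_\beta$ sends $\mathcal O_{\dom(\beta)}$ into $\mathcal O_{\ran(\beta)}$; hence $f\ast g\in A(\mathscr G,\mathcal O)$. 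Additivity in the first variable is immediate, and in the second variable it uses that each $\alpha_\beta$ is additive on stalks (axiom (SR4)). By biadditivity it suffices to treat $f=s\chi_U$ and $g=t\chi_V$, where $U,V$ are compact open bisections and $s\colon\ran(U)\to E$, $t\colon\ran(V)\to E$ are continuous sections of $p$.

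Next I would pin down the support. A summand of $(s\chi_U\ast t\chi_V)(\gamma)$ is nonzero only when $\beta\in U$ and $\rho\in V$, so this value is $0_{\ran(\gamma)}$ unless $\gamma\in UV=:W$, and $W$ is again a compact open bisection. For $\gamma\in W$ the factorization $\gamma=\beta\rho$ with $\beta\in U$, $\rho\in V$, $\dom(\beta)=\ran(\rho)$ is unique, because $\beta$ must be the unique element of $U$ with $\ran(\beta)=\ran(\gamma)$ (as $U$ is a bisection) and then $\rho=\beta\inv\gamma$; I would write $\beta(\gamma)=(\ran|_U)\inv(\ran(\gamma))$ and $\rho(\gamma)=\beta(\gamma)\inv\gamma$, which are continuous maps $W\to\mathscr G\skel 1$ since $\ran|_U$ is a homeomorphism onto $\ran(U)\supseteq\ran(W)$ and multiplication and inversion in $\mathscr G$ are continuous. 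Then, for $\gamma\in W$,
\[(s\chi_U\ast t\chi_V)(\gamma)=s(\ran(\gamma))\cdot\alpha_{\beta(\gamma)}\bigl(t(\ran(\rho(\gamma)))\bigr),\]
and since $\ran|_W$ is injective this value depends only on $x=\ran(\gamma)$; defining $u\colon\ran(W)\to E$ by $u(x)=(s\chi_U\ast t\chi_V)\bigl((\ran|_W)\inv(x)\bigr)$ gives a section of $p$ (by the stalk computation above), with $s\chi_U\ast t\chi_V=u\chi_W$.

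The one substantive point — the main obstacle, such as it is — is the continuity of $u$, equivalently of $\gamma\mapsto(s\chi_U\ast t\chi_V)(\gamma)$ on $W$, which I would assemble from pieces: $\gamma\mapsto s(\ran(\gamma))$ and $\gamma\mapsto t(\ran(\rho(\gamma)))$ are continuous composites; the pair $\gamma\mapsto\bigl(\beta(\gamma),t(\ran(\rho(\gamma)))\bigr)$ lands in the pullback $\mathscr G\skel 1\times_{\dom,p}E$ because $\dom(\beta(\gamma))=\ran(\rho(\gamma))=p\bigl(t(\ran(\rho(\gamma)))\bigr)$, so $\gamma\mapsto\alpha_{\beta(\gamma)}\bigl(t(\ran(\rho(\gamma)))\bigr)$ is continuous by continuity of $\alpha$; and the pair $\bigl(s(\ran(\gamma)),\alpha_{\beta(\gamma)}(t(\ran(\rho(\gamma))))\bigr)$ lies in $E\times_{p,p}E$ because both coordinates sit in $\mathcal O_{\ran(\gamma)}$, so multiplying them is continuous by axiom (SR2). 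Composing with the homeomorphism $(\ran|_W)\inv$ shows that $u$ is continuous, whence $s\chi_U\ast t\chi_V=u\chi_W\in\Gamma_c(\mathscr G,\mathcal O)$; biadditivity then yields $f\ast g\in\Gamma_c(\mathscr G,\mathcal O)$ in general.
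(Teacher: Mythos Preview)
Your proof is correct and follows essentially the same approach as the paper: reduce by biadditivity to functions supported on single compact open bisections $U,V$, observe the product is supported on the compact open bisection $UV$, and verify continuity there. The paper packages the continuity check slightly more succinctly via the homeomorphism $\mu\colon U\times_{\dom,\ran}V\to UV$, writing $(f\ast g)|_{UV}$ as $(\beta,\rho)\mapsto f|_U(\beta)\,\alpha(\beta,g|_V(\rho))$ composed with $\mu^{-1}$, whereas you spell out the components $\beta(\gamma),\rho(\gamma)$ of $\mu^{-1}$ explicitly; the content is the same.
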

\begin{proof}
Suppose first that $\beta\rho=\gamma$.  Then note that $g(\rho)\in \mathcal O_{\ran(\rho)} = \mathcal O_{\dom(\beta)}$ and so
$\alpha_{\beta}(g(\rho))\in \mathcal O_{\ran(\beta)}$.  Thus $f(\beta)\alpha_{\beta}(g(\rho))\in \mathcal O_{\ran(\beta)}=\mathcal O_{\ran(\gamma)}$.  The sum in \eqref{eq:define.conv} is finite by Proposition~\ref{p:finiteness.prop} as we must have $\ran(\beta)=\ran (\gamma)$ and $\dom(\rho)=\dom(\gamma)$.
It remains to verify that $f\ast g\in \Gamma_c(\mathscr G,\mathcal O)$.  Using that $\alpha_{\beta}$ is an additive homomorphism, for each $\beta\in \mathscr G\skel 1$, and that $\mathcal O_{\ran(\gamma)}$ is a ring and hence satisfies the distributive law, it suffices to show that if $U,V$ are compact open bisections and $f,g\in \Gamma_c(\mathscr G,\mathcal O)$ are supported on $U,V$, respectively,   then $f\ast g\in \Gamma_c(\mathscr G,\mathcal O)$.

First note that $(f\ast g)(\gamma)=0$ unless $\gamma\in UV$ by \eqref{eq:define.conv} and that $UV$ is a compact open bisection. Thus $f\ast g$ is supported on $UV$.  Next note that  the multiplication mapping $\mu\colon U\times_{\dom,\ran} V\to UV$ is a homeomorphism in an ample groupoid, when $U,V$ are compact open bisections.  The mapping $(f\ast g)|_{UV}$ is the composition of $(\mu|_{U\times_{\dom,\ran} V})\inv$ with the continuous mapping $U\times_{\dom,\ran} V\to E$ given by $(\beta,\rho)\mapsto f|_U(\beta)\alpha(\beta,g|_V(\rho))$ and hence is continuous. This completes the proof.
\end{proof}

\begin{Rmk}\label{r:conv}
The proof of Proposition~\ref{p:well.def} shows that if $f$ is supported on $U$ and $g$ is supported on $V$, with $U,V$ compact open bisections, then $f\ast g$ is supported on $UV$, a fact that shall be used later.
\end{Rmk}

Note that if $x\in \mathscr G\skel 0$, then $\alpha_x$ is an identity map.  It follows that if $f,g$ are supported on $\mathscr G\skel 0$, then so is $f\ast g$ and, moreover, $(f\ast g)(x)=f(x)g(x)$.  In other words, we have a subring $\Gamma_c(\mathscr G\skel 0,\mathcal O)$ consisting of the compactly supported global sections of $\mathcal O$, viewed as a sheaf of rings over the space $\mathscr G\skel 0$, with pointwise operations. Notice that the idempotents $\chi_U$ with $U\subseteq \mathscr G\skel 0$ compact open are central in the subring $\Gamma_c(\mathscr G\skel 0, \mathcal O)$.   We can view $\Gamma_c(\mathscr G,\mathcal O)$ as a left $\Gamma_c(\mathscr G\skel 0,\mathcal O)$-module via left multiplication, and the characteristic functions $\chi_U$  with $U$ compact open bisections span $\Gamma_c(\mathscr G,\mathcal O)$ as a left $\Gamma_c(\mathscr G\skel 0,\mathcal O)$-module.

\begin{Prop}\label{p:inverse.embed}
Let $U,V$ be compact open bisections. Then $\chi_U\ast \chi_V = \chi_{UV}$.
\end{Prop}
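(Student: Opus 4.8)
The plan is to verify the identity pointwise: I would fix an arbitrary $\gamma\in\mathscr G\skel 1$ and show that $(\chi_U\ast\chi_V)(\gamma)$ and $\chi_{UV}(\gamma)$ agree. The first reduction is to dispose of the support: by Remark~\ref{r:conv}, since $\chi_U$ is supported on $U$ and $\chi_V$ on $V$, the convolution $\chi_U\ast\chi_V$ is supported on $UV$, which is again a compact open bisection. Hence both sides vanish (equal $0_{\ran(\gamma)}$) whenever $\gamma\notin UV$, and it remains only to treat $\gamma\in UV$.

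Now fix $\gamma\in UV$. In the defining sum $(\chi_U\ast\chi_V)(\gamma)=\sum_{\beta\rho=\gamma}\chi_U(\beta)\,\alpha_\beta(\chi_V(\rho))$, a term can be nonzero only when $\beta\in U$ and $\rho\in V$. I would then argue that exactly one such factorization survives: since $U$ is a bisection and $\ran(\beta)=\ran(\gamma)$ is forced, $\ran|_U$ injective gives at most one admissible $\beta$; since $V$ is a bisection and $\dom(\rho)=\dom(\gamma)$ is forced, $\dom|_V$ injective gives at most one admissible $\rho$; and because the multiplication map $\mu\colon U\times_{\dom,\ran}V\to UV$ is a bijection (as recalled in the proof of Proposition~\ref{p:well.def}), for $\gamma\in UV$ there genuinely is such a pair $(\beta,\rho)$ with $\beta\rho=\gamma$. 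So the sum collapses to the single term $\chi_U(\beta)\,\alpha_\beta(\chi_V(\rho))=1_{\ran(\beta)}\,\alpha_\beta(1_{\ran(\rho)})$, and here $\ran(\rho)=\dom(\beta)$ because the product $\beta\rho$ is defined.

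To finish I would invoke axiom (SR4): $\alpha_\beta\colon\mathcal O_{\dom(\beta)}\to\mathcal O_{\ran(\beta)}$ is a unital ring homomorphism, hence $\alpha_\beta(1_{\dom(\beta)})=1_{\ran(\beta)}$. Therefore the surviving term equals $1_{\ran(\beta)}\cdot 1_{\ran(\beta)}=1_{\ran(\beta)}=1_{\ran(\gamma)}$, which is precisely $\chi_{UV}(\gamma)$ for $\gamma\in UV$. Combined with the vanishing outside $UV$, this shows $\chi_U\ast\chi_V=\chi_{UV}$ as sections.

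There is no substantive obstacle here; the only point needing a little care is the combinatorics of the factorization $\gamma=\beta\rho$ — confirming that precisely one term of the convolution survives and that it lies in the correct stalk $\mathcal O_{\ran(\gamma)}$ — and this is entirely controlled by the bisection hypotheses together with the homeomorphism $\mu$ already established in Proposition~\ref{p:well.def}.
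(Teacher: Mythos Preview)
Your proof is correct and follows essentially the same approach as the paper: a pointwise verification that splits into the cases $\gamma\notin UV$ and $\gamma\in UV$, with the latter reducing to a single surviving term computed via the unital homomorphism $\alpha_\beta$. You are simply more explicit than the paper about why the factorization is unique (invoking the bisection property and the bijectivity of $\mu$) and about why $\alpha_\beta$ preserves the identity (citing (SR4)), whereas the paper compresses these into one line.
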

\begin{proof}
Indeed, $(\chi_U\ast \chi_V)(\gamma) = \sum_{\beta\rho=\gamma}\chi_U(\beta)\alpha_{\beta}(\chi_V(\rho))$. If $\gamma\notin UV$, then this resulting summation is zero.  Otherwise, $\gamma=\beta\rho$ for a unique $\beta\in U$ and $\rho\in V$.  Then $\chi_U(\beta)\alpha_{\beta}(\chi_V(\rho))=1_{\ran(\beta)}\alpha_{\beta}(1_{\ran(\rho)}) = 1_{\ran( \beta)}=1_{\ran(\gamma)}$ and so $\chi_U\ast \chi_V=\chi_{UV}$.
\end{proof}

\begin{Prop}\label{p:is.ring}
Let $\mathscr G$ be an ample groupoid and $\mathcal O$ a $\mathscr G$-sheaf of rings. Then $\Gamma_c(\mathscr G,\mathcal O)$ is a ring and $\Gamma_c(\mathscr G\skel 0,\mathcal O)$ is a subring (the latter with pointwise operations).
\end{Prop}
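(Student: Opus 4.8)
Proposition~\ref{p:first.ab.group} already gives that $\Gamma_c(\mathscr G,\mathcal O)$ is an abelian group under pointwise addition, and Proposition~\ref{p:well.def} shows that $\ast$ is a binary operation on $\Gamma_c(\mathscr G,\mathcal O)$. So it remains to verify that $\ast$ is biadditive and associative, and then to check the subring claim.

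Biadditivity is routine: for $f,g,h\in\Gamma_c(\mathscr G,\mathcal O)$ and $\gamma\in\mathscr G\skel 1$, the sum defining $f\ast g(\gamma)$ in~\eqref{eq:define.conv} is finite by Proposition~\ref{p:finiteness.prop} (the relevant $\beta$ all lie in $\ran\inv(\ran(\gamma))$), each $\alpha_\beta$ is additive, and each stalk $\mathcal O_{\ran(\gamma)}$ is a ring; expanding~\eqref{eq:define.conv} term by term then gives $(f+g)\ast h = f\ast h + g\ast h$ and $f\ast(g+h) = f\ast g + f\ast h$.

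For associativity I would reduce to generators. Since $\Gamma_c(\mathscr G,\mathcal O)$ is generated as an abelian group by the elements $s\chi_U$, with $U$ a compact open bisection and $s$ a section of $p$ over $\ran(U)$, biadditivity reduces associativity to the identity $\bigl((s\chi_U)\ast(t\chi_V)\bigr)\ast(u\chi_W) = (s\chi_U)\ast\bigl((t\chi_V)\ast(u\chi_W)\bigr)$ for sections $s,t,u$ over compact open bisections $U,V,W$. By Remark~\ref{r:conv} both sides are supported on $UVW$, so it suffices to evaluate at $\gamma\in UVW$; for such $\gamma$ there is a unique factorization $\gamma=\beta\sigma\tau$ with $\beta\in U$, $\sigma\in V$, $\tau\in W$, by uniqueness of factorizations through bisections. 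Every sum in~\eqref{eq:define.conv} appearing in either computation then has a single nonzero term, so no interchange of summations is involved, and unwinding the definitions shows that both sides equal
\[
 s(\ran(\beta))\,\alpha_\beta\bigl(t(\ran(\sigma))\bigr)\,\alpha_{\beta\sigma}\bigl(u(\ran(\tau))\bigr).
\]
The only two facts needed are that $\alpha_\beta$ is a ring homomorphism (to distribute $\alpha_\beta$ over the product $t(\ran(\sigma))\,\alpha_\sigma(u(\ran(\tau)))$ when expanding the right-hand side) and axiom (S3), which yields $\alpha_\beta(\alpha_\sigma(u(\ran(\tau)))) = \alpha_{\beta\sigma}(u(\ran(\tau)))$.

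Finally, for the subring claim, as observed after Remark~\ref{r:conv}: if $f,g\in\Gamma_c(\mathscr G,\mathcal O)$ are supported on $\mathscr G\skel 0$ then so is $f\ast g$, and since $\alpha_x=\mathrm{id}$ for $x\in\mathscr G\skel 0$ we get $(f\ast g)(x)=f(x)g(x)$; together with the evident fact that the sections supported on $\mathscr G\skel 0$ form an additive subgroup, this shows $\Gamma_c(\mathscr G\skel 0,\mathcal O)$ is a subring with the pointwise operations. The only place requiring genuine care is the bookkeeping of the factorizations $\gamma=\beta\sigma\tau$ in the associativity step; reducing to generators is what keeps that manageable, since it sidesteps having to manipulate the finite—but a priori unbounded—sums of~\eqref{eq:define.conv} directly.
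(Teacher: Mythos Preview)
Your proof is correct, but it takes a different route from the paper's. The paper proves associativity directly for arbitrary $f,g,h\in\Gamma_c(\mathscr G,\mathcal O)$ by expanding both iterated convolutions into a single triple sum $\sum_{\lambda\nu\rho=\gamma}$ and using that each $\alpha_\lambda$ is a ring homomorphism together with (S3); all sums involved are finite by Proposition~\ref{p:finiteness.prop}, so no interchange-of-limits issue arises, and the argument fits in three lines. Your approach instead first establishes biadditivity and then reduces associativity to generators $s\chi_U$, where the unique factorization through bisections collapses each sum to a single term. This is perfectly valid and makes the bookkeeping very explicit, but your closing remark that the reduction to generators is what ``keeps that manageable'' somewhat undersells the direct approach: because the sums in~\eqref{eq:define.conv} are genuinely finite, manipulating them directly is no harder than the single-term case, and the paper's computation is in fact the shorter one. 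The subring claim you handle exactly as the paper does, by citing the observation made just after Remark~\ref{r:conv}.
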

\begin{proof}
We will verify the associative law for $\ast$.
Let $f,g,h\in \Gamma_c(\mathscr G,\mathcal O)$.  We then compute
\begin{align*}
(f\ast (g\ast h))(\gamma) &= \sum_{\lambda\nu\rho=\gamma} f(\lambda)\alpha_{\lambda}(g(\nu)\alpha_{\nu}(h(\rho)))\\
&= \sum_{\lambda\nu\rho=\gamma} f(\lambda)\alpha_{\lambda}(g(\nu))\alpha_{\lambda\nu}(h(\rho))\\
&= ((f\ast g)\ast h)(\gamma).
\end{align*}
The remaining verifications are left to the reader.
\end{proof}

It turns out, as was the case for Steinberg algebras over rings, that having a multiplicative identity is equivalent to compactness of the unit space.

\begin{Prop}\label{p:unital}
The ring $\Gamma_c(\mathscr G,\mathcal O)$ is unital if and only if $\mathscr G\skel 0$ is compact, in which case the identity is given by the unit section $\chi_{\mathscr G\skel 0}$.
\end{Prop}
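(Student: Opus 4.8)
The plan is to prove both implications directly. For the ``if'' direction, suppose $\mathscr G\skel 0$ is compact. Then $\mathscr G\skel 0$ is itself a compact open bisection, so $\chi_{\mathscr G\skel 0}\in \Gamma_c(\mathscr G,\mathcal O)$ by definition. I claim it is a two-sided identity. Since every element of $\Gamma_c(\mathscr G,\mathcal O)$ is a finite sum of elements of the form $s\chi_U$ with $U$ a compact open bisection and $s$ a section of $p$ over $\ran(U)$, and since $\ast$ is additive in each variable, it suffices to check $\chi_{\mathscr G\skel 0}\ast (s\chi_U) = s\chi_U = (s\chi_U)\ast \chi_{\mathscr G\skel 0}$. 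Using $\mathscr G\skel 0\cdot U = U = U\cdot \mathscr G\skel 0$ and Remark~\ref{r:conv}, both products are supported on $U$, so I just evaluate at $\gamma\in U$. For the left identity, $\gamma = x\rho$ with $x = \ran(\gamma)\in \mathscr G\skel 0$ and $\rho = \gamma$ the unique factorization, and $(\chi_{\mathscr G\skel 0}\ast s\chi_U)(\gamma) = \chi_{\mathscr G\skel 0}(x)\,\alpha_x(s(\ran(\gamma))) = 1_{\ran(\gamma)}\cdot s(\ran(\gamma)) = s(\ran(\gamma))$, using that $\alpha_x$ is the identity map on the stalk $\mathcal O_x$ and that $1_{\ran(\gamma)}$ is the ring unit of $\mathcal O_{\ran(\gamma)}$. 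For the right identity, $\gamma = \beta y$ with $\beta = \gamma$ and $y = \dom(\gamma)$, giving $(s\chi_U \ast \chi_{\mathscr G\skel 0})(\gamma) = s(\ran(\gamma))\,\alpha_{\gamma}(\chi_{\mathscr G\skel 0}(y)) = s(\ran(\gamma))\,\alpha_{\gamma}(1_{\dom(\gamma)}) = s(\ran(\gamma))\cdot 1_{\ran(\gamma)} = s(\ran(\gamma))$, using axiom (SR4) that $\alpha_{\gamma}$ is a ring homomorphism, hence unital. Thus $\chi_{\mathscr G\skel 0}$ is the identity.

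For the ``only if'' direction, suppose $\Gamma_c(\mathscr G,\mathcal O)$ has an identity $e$. Since $\mathscr G\skel 0$ has a basis of compact open sets, it suffices to show $\mathscr G\skel 0 = \bigcup_{i=1}^n U_i$ for finitely many compact open $U_i\subseteq \mathscr G\skel 0$; by local compactness this forces $\mathscr G\skel 0$ compact. Write $e = \sum_{i=1}^n s_i\chi_{U_i}$ as a finite sum with each $U_i$ a compact open bisection. I claim $\mathscr G\skel 0 \subseteq \bigcup_i \ran(U_i)$. Indeed, pick $x\in \mathscr G\skel 0$; then $\chi_{V}$ for any compact open $V\subseteq \mathscr G\skel 0$ with $x\in V$ must satisfy $e\ast \chi_V = \chi_V$, so evaluating at $x$ gives $1_x = (e\ast \chi_V)(x) = \sum_{\beta\rho = x}e(\beta)\alpha_{\beta}(\chi_V(\rho))$. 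For a unit $x$, the only factorization $\beta\rho = x$ with $\rho\in V\subseteq\mathscr G\skel 0$ forces $\beta = \rho = x$, so $1_x = e(x)\alpha_x(1_x) = e(x)$; in particular $e(x)\neq 0$, so $x\in U_i$ for some $i$ and thus $x\in \ran(U_i)$ since $x$ is a unit. Hence $\mathscr G\skel 0 \subseteq \bigcup_{i=1}^n \ran(U_i)$, and each $\ran(U_i)$ is compact open (being the image of the compact open bisection $U_i$ under the homeomorphism $\ran|_{U_i}$, intersected appropriately — note $\ran(U_i)$ need not lie in $\mathscr G\skel 0$ a priori, but we only need the cover of $\mathscr G\skel 0$). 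Taking intersections with $\mathscr G\skel 0$, the sets $\ran(U_i)\cap \mathscr G\skel 0$ are compact open subsets of $\mathscr G\skel 0$ covering it, so $\mathscr G\skel 0$ is compact.

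The main obstacle I anticipate is the ``only if'' direction, specifically the bookkeeping around factorizations $\beta\rho = \gamma$ when $\gamma$ ranges over units versus general arrows, and making sure the argument that $e(x) = 1_x$ for every unit $x$ genuinely forces the finitely many supporting bisections to cover $\mathscr G\skel 0$. One must be slightly careful that $e$ is only guaranteed to be a \emph{finite} sum of terms supported on compact open bisections, and that the evaluation identity $e\ast \chi_V = \chi_V$ is applied to test elements $\chi_V$ with $V\ni x$ arbitrarily small; the point is that for a \emph{fixed} $x$ one test element already yields $e(x) = 1_x \neq 0_x$, and then the support condition on $e$ localizes $x$ into one of the $U_i$. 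The ``if'' direction is essentially a routine verification once one invokes that $\alpha_x = \mathrm{id}$ on stalks and that each $\alpha_{\gamma}$ is unital, together with the uniqueness of factorizations through a unit.
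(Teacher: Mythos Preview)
Your proof is correct and follows essentially the same approach as the paper. Both directions match the paper's argument: the ``if'' direction verifies $\chi_{\mathscr G\skel 0}$ is a two-sided identity via the convolution formula (you check on generators $s\chi_U$, the paper checks on arbitrary $f$, but the computation is identical), and the ``only if'' direction writes the identity as a finite sum $\sum s_i\chi_{U_i}$, evaluates at a unit $x$ using $e\ast\chi_V=\chi_V$ to get $e(x)=1_x\neq 0$, and concludes that the finitely many $\ran(U_i)$ (or $\dom(U_i)$) cover $\mathscr G\skel 0$.

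One small correction: your parenthetical ``note $\ran(U_i)$ need not lie in $\mathscr G\skel 0$ a priori'' is mistaken --- the range map $\ran$ always lands in $\mathscr G\skel 0$ by definition, so $\ran(U_i)\subseteq \mathscr G\skel 0$ automatically and the subsequent intersection with $\mathscr G\skel 0$ is unnecessary. This does not affect the validity of your argument. The paper additionally verifies $u(\gamma)=0$ for non-unit $\gamma$ (thereby identifying $u$ with $\chi_{\mathscr G\skel 0}$ even before knowing $\mathscr G\skel 0$ is compact), but you correctly observe that this extra step is not needed for the compactness conclusion.
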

\begin{proof}
Assume first that $\mathscr G\skel 0$ is compact.  Then $\chi_{\mathscr G\skel 0}\in \Gamma_c(\mathscr G,\mathcal O)$ and
\begin{align*}
\chi_{\mathscr G\skel  0}\ast f(\gamma) &= 1_{\ran(\gamma)}\alpha_{\ran(\gamma)}(f(\gamma))=f(\gamma)\\
f\ast \chi_{\mathscr G\skel 0}(\gamma) &= f(\gamma)\alpha_{\gamma}(1_{\dom(\gamma)}) = f(\gamma)1_{\ran(\gamma)} = f(\gamma).
\end{align*}

Conversely, suppose that $\Gamma_c(\mathscr G,\mathcal O)$ has a unit $u$.  We show that $u=\chi_{\mathscr G\skel 0}$ (i.e., sends $x$ to $1_x$ for a unit $x$ and sends  a non-unit $\gamma$ to $0_{\ran(\gamma)}$).  Indeed, let $\gamma\in \mathscr G\skel 1$ and choose $U\subseteq \mathscr G\skel 0$ compact open with $\dom(\gamma)\in U$.  Then if $\gamma\neq \dom(\gamma)$, we have that
\[0_{\ran(\gamma)}=\chi_U(\gamma) = (u\ast \chi_U)(\gamma) =u(\gamma)\alpha_{\gamma}(1_{\dom(\gamma)}) = u(\gamma). \]  On the other hand, if $\gamma\in \mathscr G\skel 0$, then
\[1_{\gamma} = \chi_U(\gamma) = (u\ast \chi_U)(\gamma) = u(\gamma)\alpha_{\gamma}(1_{\gamma}) = u(\gamma).\]
But if $\chi_{\mathscr G\skel 0}\in \Gamma_c(\mathscr G, \mathcal O)$, then
\[\chi_{\mathscr G\skel 0} = \sum_{i=1}^n s_i\chi_{U_i}\] with $U_1,\ldots, U_n$ compact open bisections and $s_i$ a section over $\ran(U_i)$.  But then $\mathscr G\skel 0\subseteq U_1\cup\cdots \cup U_n$ and so $\mathscr G\skel 0 = \dom(U_1)\cup \cdots \cup \dom(U_n)$.  As each $\dom(U_i)$ is compact, we deduce that $\mathscr G\skel 0$ is compact.
\end{proof}

As a corollary, we deduce that $\Gamma_c(\mathscr G,\mathcal O)$ has local units.

\begin{Cor}\label{c:local.units}
Let $\mathscr G$ be an ample groupoid and $\mathcal O$ a $\mathscr G$-sheaf of rings. Then $\Gamma_c(\mathscr G,\mathcal O)$ has local units contained in the center of $\Gamma_c(\mathscr G\skel 0,\mathcal O)$.
\end{Cor}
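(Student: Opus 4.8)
The plan is to exhibit a concrete directed set of idempotents, lying in the center of $\Gamma_c(\mathscr G\skel 0,\mathcal O)$, that serves as a set of local units for $\Gamma_c(\mathscr G,\mathcal O)$. The natural candidates are the characteristic functions $\chi_K$ with $K\subseteq\mathscr G\skel 0$ compact open. Each is idempotent in $\Gamma_c(\mathscr G,\mathcal O)$ by Proposition~\ref{p:inverse.embed}: $\chi_K\ast\chi_K=\chi_{KK}$ and $KK=K$, since composability of two units $b,c\in K$ forces $b=c$. Each $\chi_K$ lies in the center of the subring $\Gamma_c(\mathscr G\skel 0,\mathcal O)$, as was observed earlier in this section. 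And $\chi_K\ast\chi_{K'}=\chi_{K\cap K'}$ together with $\chi_K\ast\chi_{K\cup K'}=\chi_K=\chi_{K\cup K'}\ast\chi_K$ shows that $E=\{\chi_K\mid K\subseteq\mathscr G\skel 0\text{ compact open}\}$ admits common upper bounds, so that $\bigcup_K \chi_K\,\Gamma_c(\mathscr G,\mathcal O)\,\chi_K$ is a directed union.

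It then remains to check that every $f\in\Gamma_c(\mathscr G,\mathcal O)$ lies in one of these corners. Writing $f=\sum_{j=1}^n s_j\chi_{U_j}$ with the $U_j$ compact open bisections and the $s_j$ sections, it suffices to prove that for $f'$ supported on a single compact open bisection $U$ one has $\chi_K\ast f'=f'=f'\ast\chi_K$ whenever $\dom(U)\cup\ran(U)\subseteq K$, and then take $K=\bigcup_j(\dom(U_j)\cup\ran(U_j))$ and sum over $j$. This single-bisection claim is a direct unwinding of \eqref{eq:define.conv}. In $(\chi_K\ast f')(\gamma)=\sum_{\beta\rho=\gamma}\chi_K(\beta)\alpha_\beta(f'(\rho))$, a nonzero term requires $\beta\in K$, forcing $\beta=\ran(\gamma)$ and $\rho=\gamma$, so the sum equals $\chi_K(\ran(\gamma))\alpha_{\ran(\gamma)}(f'(\gamma))=\chi_K(\ran(\gamma))f'(\gamma)$ by (S1); this is $f'(\gamma)$, because $f'(\gamma)\neq 0$ forces $\ran(\gamma)\in\ran(U)\subseteq K$, whence $\chi_K(\ran(\gamma))=1_{\ran(\gamma)}$. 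Symmetrically $(f'\ast\chi_K)(\gamma)=f'(\gamma)\alpha_\gamma(\chi_K(\dom(\gamma)))$, and when $f'(\gamma)\neq 0$ we have $\dom(\gamma)\in\dom(U)\subseteq K$, so $\alpha_\gamma(\chi_K(\dom(\gamma)))=\alpha_\gamma(1_{\dom(\gamma)})=1_{\ran(\gamma)}$ since $\alpha_\gamma$ is a ring homomorphism by (SR4). Hence $f=\chi_K\ast f\ast\chi_K\in\chi_K\,\Gamma_c(\mathscr G,\mathcal O)\,\chi_K$, completing the proof.

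There is no serious obstacle here. The one point to keep an eye on is the assertion in the statement that the local units lie in the center of $\Gamma_c(\mathscr G\skel 0,\mathcal O)$: the $\chi_K$ are certainly not central in $\Gamma_c(\mathscr G,\mathcal O)$ in general, but their centrality in the diagonal subring is precisely the remark recorded earlier and follows from the two displayed identities restricted to units. One could instead deduce the corollary from Proposition~\ref{p:unital} applied to the reduction $\mathscr G|_K=\ran\inv(K)\cap\dom\inv(K)$, an ample groupoid whose unit space $K$ is compact and whose section algebra has identity $\chi_K$; but that requires discussing restrictions of $\mathscr G$-sheaves, so the computation above is the shorter path.
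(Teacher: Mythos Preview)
Your proof is correct. Interestingly, the paper takes precisely the alternative route you sketch in your closing paragraph: it defines the restriction $\mathscr G|_U$ for $U\subseteq\mathscr G\skel 0$ compact open, restricts $\mathcal O$ to $\mathcal O_U$, observes that $\Gamma_c(\mathscr G,\mathcal O)=\varinjlim_U\Gamma_c(\mathscr G|_U,\mathcal O_U)$, and then applies Proposition~\ref{p:unital} to each $\mathscr G|_U$ (whose unit space is compact) to obtain $\chi_U$ as the identity. Your direct convolution computation avoids introducing the restricted sheaf and is self-contained, at the cost of unwinding \eqref{eq:define.conv} by hand; the paper's argument is shorter on the page but relies on asserting that $\Gamma_c(\mathscr G|_U,\mathcal O_U)$ embeds as a subring and that the directed limit holds ``immediately from the definitions.'' Both approaches identify the same set of local units $\{\chi_U\mid U\subseteq\mathscr G\skel 0\ \text{compact open}\}$, and your remark about centrality in the diagonal subring (rather than in the full ring) is the correct reading of the statement.
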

\begin{proof}
Let $U\subseteq \mathscr G\skel 0$ be compact open.  Let $\mathscr G|_U$ be the open subgroupoid with object set $U$ and all arrows between elements of $U$.  Let $\mathcal O_U$ be the restriction of $\mathcal O$ to $U$.  So if $\mathcal O=(E,p,\alpha,+,\cdot)$, then $\mathcal O_U= (p\inv(U),p,\alpha,+,\cdot)$ (where the structure maps are appropriately restricted).  It is immediate from the definitions that $\Gamma_c(\mathscr G|_U,\mathcal O_U)$ is a subring of $\Gamma_c(\mathscr G,\mathcal O)$ and that $\Gamma_c(\mathscr G,\mathcal O) = \varinjlim_{U} \Gamma_c(\mathscr G|_U,\mathcal O_U)$ since $\mathscr G\skel 0$ has a basis of compact opens.  Each of the rings  $\Gamma_c(\mathscr G|_U,\mathcal O_U)$ is unital by Proposition~\ref{p:unital} with identity  $\chi_U\in \Gamma_c(\mathscr G\skel 0,\mathcal O)$, whence $\Gamma_c(\mathscr G|_U,\mathcal O)$ has local units contained in the center of $\Gamma_c(\mathscr G\skel 0,\mathcal O)$, as required.
\end{proof}

We now aim to generalize the characterization of the center of an ample groupoid algebra from~\cite{Steinbergalgebra} to the current setting.
\begin{Def}[Class function]
Let $\mathcal O$ be a $\mathscr G$-sheaf of rings for an ample groupoid $\mathscr G$.  An element $f\in \Gamma_c(\mathscr G,\mathcal O)$ is a \emph{class function} if the following hold.
\begin{enumerate}
\item $f(\gamma)\neq 0$ implies $\dom(\gamma)=\ran(\gamma)$ and $af(\gamma) = f(\gamma)\alpha_{\gamma}(a)$ for all $a\in \mathcal O_{\dom(\gamma)}=\mathcal O_{\ran(\gamma)}$.
\item $\alpha_{\sigma}(f(\sigma\inv \gamma\sigma)) = f(\gamma)$ for all $\sigma,\tau$ with $\ran(\sigma) = \dom(\gamma)=\ran(\gamma)$.
\end{enumerate}
\end{Def}
This definition reduces to the one in~\cite{Steinbergalgebra} when $\mathcal O$ is a constant sheaf of commutative rings.

\begin{Thm}\label{t:center}
Let $\mathscr G$ be an ample groupoid and $\mathcal O$ be a $\mathscr G$-sheaf of rings.  Then $f\in Z(\Gamma_c(\mathscr G,\mathcal O))$ if and only if $f$ is a class function.
\end{Thm}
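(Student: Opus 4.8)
The plan is to prove both implications by testing the commutation condition $f\ast g = g\ast f$ against the generating elements $s\chi_U$ of $\Gamma_c(\mathscr G,\mathcal O)$, where $U$ is a compact open bisection and $s$ a section over $\ran(U)$. First I would write out both convolutions explicitly: using Remark~\ref{r:conv}, $f\ast (s\chi_U)$ is supported on $(\supp f)U$ and $(s\chi_U)\ast f$ is supported on $U(\supp f)$, and evaluating \eqref{eq:define.conv} at an arrow $\gamma$ one gets, for the right-hand product, $(s\chi_U\ast f)(\gamma) = s(\ran(\beta))\,\alpha_\beta(f(\beta\inv\gamma))$ where $\beta$ is the unique element of $U$ with $\ran(\beta)=\ran(\gamma)$ (if it exists), and symmetrically for the left-hand product $(f\ast s\chi_U)(\gamma) = f(\gamma\rho\inv)\,\alpha_{\gamma\rho\inv}(s(\ran(\rho)))$ where $\rho$ is the unique element of $U$ with $\dom(\rho)=\dom(\gamma)$.

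For the ``only if'' direction, assume $f$ is central. I would first take $U\subseteq\mathscr G\skel 0$ compact open and $s$ an arbitrary section of $\mathcal O$ over $U$; comparing $f\ast s\chi_U$ and $s\chi_U\ast f$ at a suitable $\gamma$ forces, on the one hand, the support condition $f(\gamma)\neq 0 \Rightarrow \dom(\gamma)=\ran(\gamma)$ (by choosing $U$ so that exactly one of the two products can be nonzero at $\gamma$, using that $\mathscr G\skel 0$ has a basis of compact opens), and on the other hand the identity $a f(\gamma) = f(\gamma)\alpha_\gamma(a)$ for all $a\in\mathcal O_{\dom(\gamma)}$ (taking $s$ to be a local section through $a$ at the point $\dom(\gamma)=\ran(\gamma)$, which exists since $p$ is a local homeomorphism and $+,\cdot$ are continuous — so the stalk value $a$ can be realized as $s(x)$ for some section near $x$). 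This gives condition~(1). For condition~(2), I would instead test against $\chi_U$ for a general compact open bisection $U$: centrality gives $\chi_U\ast f = f\ast\chi_U$, and evaluating at $\gamma$ with $\dom(\gamma)=\ran(\gamma)$ and picking $\sigma\in U$ with $\ran(\sigma)=\dom(\gamma)$, the left side contributes $\alpha_\sigma(f(\sigma\inv\gamma\sigma))$ while the right side contributes $f(\gamma)$ (after using (1) to move scalars, i.e. using $1_{\ran(\sigma)}$ from $\chi_U$), yielding the conjugation-invariance $\alpha_\sigma(f(\sigma\inv\gamma\sigma)) = f(\gamma)$. Conversely, for the ``if'' direction, I would assume $f$ is a class function and verify $f\ast(s\chi_U) = (s\chi_U)\ast f$ directly: using (1) to commute $f(\gamma)$ past the sheaf-ring action on the values of $s$, and using (2) with the substitution $\gamma\mapsto\beta\inv\gamma$ (where $\beta\in U$) to reconcile the two indexing conventions, one checks the two sides agree pointwise, and since such $s\chi_U$ generate $\Gamma_c(\mathscr G,\mathcal O)$ additively this suffices.

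The main obstacle I anticipate is the bookkeeping in the ``only if'' direction around non-Hausdorffness and the local-section business: one must be careful that testing against $s\chi_U$ really does isolate a single arrow $\gamma$ in each fiber (here Proposition~\ref{p:finiteness.prop} and the bisection property are what make the sums in \eqref{eq:define.conv} collapse to one term), and that an arbitrary stalk element $a\in\mathcal O_{\dom(\gamma)}$ is of the form $s(\dom(\gamma))$ for a genuine continuous section $s$ over a small enough compact open neighborhood — this uses that $\mathcal O$ is a sheaf of rings so that sections through any point exist locally and can be cut down to compact opens. A secondary subtlety is that when deriving (1) one needs $U$ ranging over a basis at $\ran(\gamma)$ to separate the case $\dom(\gamma)\neq\ran(\gamma)$, exactly as in the proof of Proposition~\ref{p:unital}; once (1) is in hand, the scalar $1_{\ran(\gamma)}$ appearing from $\chi_U$ in the proof of (2) becomes harmless. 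The rest is the kind of routine but slightly tedious computation with \eqref{eq:define.conv} that mirrors the constant-sheaf case in~\cite{Steinbergalgebra}.
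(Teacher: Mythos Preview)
Your approach is essentially the same as the paper's, and correct in outline.  One small bookkeeping slip: in deriving condition~(2), you should evaluate $\chi_U\ast f$ and $f\ast\chi_U$ at $\gamma\sigma$ rather than at $\gamma$.  If you evaluate at $\gamma$ (with $\dom(\gamma)=\ran(\gamma)$ and $\sigma\in U$ the element with $\ran(\sigma)=\ran(\gamma)$), you get $(\chi_U\ast f)(\gamma)=\alpha_\sigma(f(\sigma\inv\gamma))$, not $\alpha_\sigma(f(\sigma\inv\gamma\sigma))$, and $(f\ast\chi_U)(\gamma)=f(\gamma\tau\inv)$ where $\tau$ is the element of $U$ with $\dom(\tau)=\dom(\gamma)$; in general $\tau\neq\sigma$, and this does not yield the conjugation identity directly.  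Evaluating instead at $\gamma\sigma$, both the range and domain constraints force the relevant element of $U$ to be $\sigma$, giving $(\chi_U\ast f)(\gamma\sigma)=\alpha_\sigma(f(\sigma\inv\gamma\sigma))$ and $(f\ast\chi_U)(\gamma\sigma)=f(\gamma)$, which is what you want.  (The paper also treats the ``if'' direction by a change of variables in the general convolution sum rather than reducing to generators $s\chi_U$, but your reduction is equally valid.)
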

\begin{proof}
Assume first that $f$ is a class function and let $g\in \Gamma_c(\mathscr G,\mathcal O)$.   Then we have
\begin{align}\label{eq:center.1}
f\ast g(\gamma) &= \sum_{\sigma\tau=\gamma}f(\sigma)\alpha_{\sigma}(g(\tau))\\ \label{eq:center.2}
g\ast f(\gamma) &= \sum_{\lambda\nu=\gamma}g(\lambda)\alpha_{\lambda}(f(\nu)).
\end{align}
Any non-zero term in the summand in \eqref{eq:center.1} must have $\dom(\sigma)=\ran(\sigma)$ and $f(\sigma)\alpha_{\sigma}(g(\tau)) = g(\tau)f(\sigma)$ by the first condition in the definition of a class function.  By the second condition in the definition of a class function, $f(\sigma) = \alpha_{\tau}(f(\tau\inv \sigma\tau))$.  Thus if we fix $\tau$ and  perform the invertible change of variables $\nu= \tau\inv\sigma \tau$, we obtain that
\begin{equation}\label{eq:center.3}
 \sum_{\sigma\tau=\gamma}f(\sigma)\alpha_{\sigma}(g(\tau))= \sum_{\dom(\sigma)=\ran(\sigma), \sigma\tau=\gamma}g(\tau)f(\sigma) = \sum_{\tau\nu=\gamma,\dom(\nu)=\ran(\nu)}g(\tau)\alpha_{\tau}(f(\nu)).
\end{equation}
But the right hand side of \eqref{eq:center.3} is the same as the right hand side of \eqref{eq:center.2} as $g(\lambda)\alpha_{\lambda}(f(\nu))=0$ unless $\dom(\nu)=\ran(\nu)$.

Conversely, suppose that $f\in Z(\Gamma_c(\mathscr G,\mathcal O))$.  We show that $f$ is a class function.  Suppose $\gamma\in \mathscr G\skel 1$ with $\dom(\gamma)\neq \ran(\gamma)$.  Let $U\subseteq \mathscr G\skel 0$ be compact open with $\dom(\gamma)\in U$ and $\ran(\gamma)\notin U$ (using that $\mathscr G\skel 0$ is Hausdorff).  Then $\chi_U\ast f(\gamma) = 0$ and $f\ast \chi_U(\gamma) = f(\gamma)\alpha_{\gamma}(1_{\dom(\gamma)}) = f(\gamma)$.  Thus $f(\gamma)=0$.  Next suppose that $\dom(\gamma)=\ran(\gamma)=x$ and $a\in \mathcal O_x$.  Then by sheaf theory over Hausdorff spaces with a basis of compact open sets, we can find a section $s\in \Gamma_c(\mathscr G\skel 0,\mathcal O)$ with $s(x) = a$ (see, for instance, Proposition~\ref{p:lots.of.secs} below or~\cite{Pierce}).  Then $s\ast f(\gamma) = s(x)f(\gamma) = af(\gamma)$ and $f\ast s(\gamma) = f(\gamma)\alpha_{\gamma}(s(x)) = f(\gamma)\alpha_{\gamma}(a)$.  This shows that $f$ satisfies the first condition in the definition of a class function.

Suppose now that $\ran(\sigma)=\dom(\gamma)=\ran(\gamma)$.  Choose a compact open bisection $U$ with $\sigma\in U$.  Then $f\ast \chi_U(\gamma\sigma) = f(\gamma)\alpha_{\gamma}(\chi_U(\sigma)) = f(\gamma)$.  On the other hand, since $\sigma$ is the only element of $U$ with $\ran(\sigma)=\ran(\gamma)$, we have that $\chi_U\ast f(\gamma\sigma) = \chi_U(\sigma)\alpha_{\sigma}(f(\sigma\inv \gamma\sigma)) = \alpha_{\sigma}(f(\sigma\inv \gamma\sigma))$.  This shows that $f(\gamma) = \alpha_{\sigma}(f(\sigma\inv \gamma\sigma)$, as required.  Thus $f$ is a class function.
\end{proof}

Our next result will be to show that $\Gamma_c(\mathscr G,\mathcal O)$ can be generated as a left $\Gamma_c(\mathscr G\skel 0,\mathcal O)$-module by characteristic functions belonging to a sufficiently large inverse semigroup of compact open bisections.

\begin{Prop}\label{p:generated}
Let $\mathscr G$ be an ample groupoid and $\mathcal O$ a sheaf of rings on $\mathscr G$.
Let $S\subseteq \mathscr G^a$ be an inverse semigroup satisfying (G1).
Then $\Gamma_c(\mathscr G,\mathcal O)$ is spanned by the functions supported on elements of $S$.
\end{Prop}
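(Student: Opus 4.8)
The plan is to reduce an arbitrary element of $\Gamma_c(\mathscr G,\mathcal O)$, which by definition is a finite sum of functions $s\chi_U$ with $U$ a compact open bisection, to a finite sum of functions supported on members of $S$. Since $\Gamma_c(\mathscr G,\mathcal O)$ is an abelian group under pointwise addition and the functions supported on elements of $S$ are closed under addition and negation, it suffices to treat a single generator $s\chi_U$ where $U$ is an arbitrary compact open bisection and $s\colon \ran(U)\to E$ is a section of $p$.

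First I would cover $U$ by members of $S$: by (G1), every $\gamma\in U$ lies in some $W_\gamma\in S$, and replacing $W_\gamma$ by $W_\gamma\cap U$ (which need not be in $S$, but is still a compact open bisection contained in a member of $S$) — actually the cleaner route is to note that the sets $W\cap U$ with $W\in S$ form an open cover of the compact set $U$, so finitely many $W_1\cap U,\dots,W_n\cap U$ cover $U$. Then, using that $\mathscr G\skel 0$ is Hausdorff and has a basis of compact open sets, I would disjointify on the base: set $V_i = W_i\cap U$, pass to the domains $\dom(V_i)\subseteq \mathscr G\skel 0$, and write $\bigcup_i \dom(V_i)$ as a finite disjoint union of compact open sets $O_1,\dots,O_m$ each contained in some $\dom(V_{j(k)})$. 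Pulling back along $(\ran|_U)\inv$ (a homeomorphism $\ran(U)\to U$, equivalently using $(\dom|_U)\inv$), the sets $U_k := V_{j(k)}\cap \dom\inv(O_k)$ — more precisely the portion of $U$ sitting over $O_k$ — are pairwise disjoint compact open bisections with $U = U_1\sqcup\cdots\sqcup U_m$ and each $U_k\subseteq W_{j(k)}\in S$.

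Now $s\chi_U = \sum_{k=1}^m s\chi_{U_k}$ by pointwise addition, so it remains to handle a single $s\chi_{U_k}$ with $U_k\subseteq W$ for some $W\in S$. Here I would use the idempotent $\chi_{O_k}\in \Gamma_c(\mathscr G\skel 0,\mathcal O)$, where $O_k = \ran(U_k)$ (or $\dom(U_k)$ — one must be careful which side, since sections are indexed by $\ran$; taking $O_k=\ran(U_k)$ compact open works): one checks directly from the convolution formula \eqref{eq:define.conv} that $(s\chi_{O_k})\ast \chi_W = s\chi_{U_k}$, because $\chi_W$ restricted over the base $O_k$ picks out exactly the bisection $U_k$ (the unique element of $W$ over each point of $O_k$ lies in $U_k$), and $\alpha$ at a unit is the identity so the section $s$ is transported correctly. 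Thus $s\chi_{U_k} = (s\chi_{O_k})\ast \chi_W$ is a left $\Gamma_c(\mathscr G\skel 0,\mathcal O)$-multiple of $\chi_W$ with $W\in S$, hence is (in the left-module sense) spanned by a function supported on a member of $S$; expanding $s\chi_{O_k}$ itself as a sum of generators $t\chi_{O}$ shows the product is an honest element of $\Gamma_c(\mathscr G,\mathcal O)$ supported on $W\subseteq$ a member of $S$, and since the statement asserts spanning as a left $\Gamma_c(\mathscr G\skel 0,\mathcal O)$-module (per the paragraph preceding the proposition) this is exactly what is needed.

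The main obstacle is the bookkeeping in the disjointification step: one must simultaneously refine the cover $\{W_i\cap U\}$ of $U$ and produce a compatible disjoint decomposition of the relevant subset of $\mathscr G\skel 0$ so that each resulting piece of $U$ sits inside a single $W_i\in S$ and over a single compact open piece of the base — this requires knowing that in an ample groupoid $\ran$ and $\dom$ restrict to homeomorphisms on compact open bisections (stated in the excerpt) so that disjointifying downstairs disjointifies upstairs. The verification that $(s\chi_{O_k})\ast\chi_W = s\chi_{U_k}$ is then routine from \eqref{eq:define.conv} together with $\alpha_x = \mathrm{id}$ for $x\in\mathscr G\skel 0$, exactly as in Proposition~\ref{p:inverse.embed}.
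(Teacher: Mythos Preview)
Your argument is correct, but it takes a somewhat different route from the paper and contains a detour you do not need.

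The paper proceeds as follows. It sets $D=\{U\in\mathscr G^a\mid U\subseteq V\ \text{for some}\ V\in S\}$, checks that $D$ is a basis for $\mathscr G\skel 1$, and then makes the key observation that if $f$ is supported on $U\in D$ with $U\subseteq V\in S$, then $f$ is already supported on $V$: since $V$ is Hausdorff and $U$ is compact, $U$ is clopen in $V$, so $f|_V$ is continuous. For a general $f$ supported on $U\in\mathscr G^a$, it covers $U$ by finitely many $U_1,\ldots,U_n\in D$ and uses inclusion--exclusion to write $f$ as an integer combination of the $f_{\bigcap_{i\in I}U_i}$, each of which lies in $D$.

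Your approach differs in two ways. First, you disjointify over the base rather than use inclusion--exclusion; this works, though your bookkeeping wobbles (you set up the $O_k$ as pieces of $\bigcup_i\dom(V_i)$ and then in the convolution step switch to $O_k=\ran(U_k)$ without comment). Second, once you have $U_k\subseteq W\in S$, you pass through the convolution identity $(s\chi_{\ran(U_k)})\ast\chi_W = s\chi_{U_k}$. This identity is correct, but to conclude that $s\chi_{U_k}$ is ``supported on $W$'' you still implicitly need the clopen argument above (Remark~\ref{r:conv} only gives support on $\ran(U_k)W=U_k$, not on $W$). So the convolution is a detour: once you have $U_k\subseteq W\in S$, the paper's one-line clopen observation finishes immediately. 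Finally, the statement means additive span, not $\Gamma_c(\mathscr G\skel 0,\mathcal O)$-module span --- your argument actually delivers the former, so this misreading is harmless.
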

\begin{proof}
Set \[D=\{U\in \mathscr G^a\mid U\subseteq V,\ \text{for some}\ V\in S\}.\] We claim that $D$ is a basis for the topology on $\mathscr G\skel 1$. Let $\gamma\in \mathscr G\skel 1$.  Then a basis of neighborhoods of $\gamma$ is given by the compact open bisections  $U$ with  $\gamma\in U$.  There is $V\in S$ with $\gamma\in V$ by (G1).  As $U\cap V$ is a neighborhood of $\gamma$ and $\mathscr G^a$ is a basis for the topology, there is $W\in \mathscr G^a$ with $\gamma\in W$ and $W\subseteq U\cap V$.  But then $W\in D$, so we conclude that $D$ is a basis for the topology on $\mathscr G\skel 1$.

Let $R$ be the span of the functions supported on $S$.  Note that $R$ is a subring of $\Gamma_c(\mathscr G,\mathcal O)$ by Remark~\ref{r:conv}.
Suppose that $f\in \Gamma_c(\mathscr G,\mathcal O)$ is supported on $U\in D$.  If $U\subseteq V$ with $V\in S$, then $U$ is clopen in $V$ (as $V$ is Hausdorff and $U$ is compact). We may then view $f$ as supported on $V$ since $f|_V$ is continuous (because $U$ is clopen in $V$) and $f$ vanishes outside $V$.  Thus $f\in R$.

Next suppose that $U\in \mathscr G^a$ and $f$ is supported on $U$.  Since $D$ is a basis for the topology and $U$ is compact open, we have that $U=U_1\cup\cdots\cup U_n$ with $U_1,\ldots, U_n\in D$.  The $U_i$ are compact open and hence, since $U$ is Hausdorff, clopen in $U$.  Thus any finite intersection $V$ of the $U_i$ is compact open and belongs to $D$, as $D$ is a lower set.   Also $V$ is clopen in $U$.  Let $f_V$ be the mapping which agrees with $f$ on $V$ and is $0$ elsewhere.  Then $f_V\in \Gamma_c(\mathscr G,\mathcal O)$ and is supported on $V$.  Hence $f_V\in R$ by what we have already observed.

It follows from the principle of inclusion-exclusion that
\[f = \sum_{k=1}^n(-1)^{k-1}\sum_{\substack{I\subseteq \{1,\ldots, n\}\\ |I|=k}}f_{\bigcap_{i\in I} U_i}\in R.\]
This completes the proof.
\end{proof}

An important special case will be when $\mathscr G$ is a groupoid of germs of a Boolean action of an inverse semigroup $S$.  The proposition will imply that the functions supported on compact open bisections coming from $S$ span the ring.  This will be useful in expressing $\Gamma_c(\mathscr G,\mathcal O)$ as a skew inverse semigroup ring.

\section{Convolution algebras as quotients of skew inverse semigroup rings}\label{s:as.quotient}
In this section, we show that if $\mathscr G$ is an ample groupoid, $\mathcal O$ is a $\mathscr G$-sheaf of rings and $S\leq \mathscr G^a$ is an inverse subsemigroup satisfying (G1), then $\Gamma_c(\mathscr G,\mathcal O)$ is a quotient of $\Gamma_c(\mathscr G\skel 0, \mathcal O)\rtimes S$ for an appropriate spectral action of $S$ on $\Gamma_c(\mathscr G\skel 0,\mathcal O)$.  We shall later show that the quotient map is, in fact, an isomorphism if $S$ satisfies, in addition, (G2).  This occurs precisely when $\mathscr G$ is the groupoid of germs of the action of $S$ on $\mathscr G\skel 0$ (see~\cite{Exel}).  In particular, if $\mathcal O$ is a constant sheaf $\Delta(\mathscr G,R)$ for a commutative ring $R$ and $S=\mathscr G^a$, then we recover the ``Steinberg'' algebra $A_R(\mathscr G)$~\cite{Steinbergalgebra} as the skew inverse semigroup ring $C_c(\mathscr G\skel 0,R)\rtimes \mathscr G^a$ where $C_c(\mathscr G\skel 0, R)$ is the ring of locally constant mappings $\mathscr G\skel 0\to R$ with compact support.  In a later section, we shall realize skew inverse semigroup rings coming from spectral actions as convolution algebras for a sheaf of rings on an ample groupoid.

Let $\mathcal O=(E,p,\alpha)$ be a $\mathscr G$-sheaf of rings, which we fix for the rest of the section.  We shall continue to denote the stalk at $x\in \mathscr G\skel 0$ by $\mathcal O_x$. Let $S\leq \mathscr G^a$ be an inverse semigroup satisfying (G1).  Put $A=\Gamma_c(\mathscr G\skel 0,\mathcal O)$, which we view as the ring of compactly supported continuous sections of $\mathcal O$ over $\mathscr G\skel 0$ with pointwise operations.  We wish to define an action of $S$ on $A$.  If $U\subseteq \mathscr G\skel 0$ is open, we put $\mathcal O|_U = (p\inv(U),+,\cdot)$; it is a sheaf of rings on $U$.  Then $A(U)=\Gamma_c(U,\mathcal O|_U)$ can be identified with the subring of $A$ consisting of sections supported on $U$.  It is trivial to see that $A(U)$ is a two-sided ideal of $A$, for every open subset of $\mathscr G\skel 0$, as $\supp(fg)\subseteq \supp(f)\cap \supp(g)$.  For $s\in S$, put $D_s = A(\ran(s))$.  Note that since $\ran(s)$ is compact open, $D_s$ has an identity, the mapping $\chi_{\ran(s)}$.   We define an isomorphism $\til \alpha_s\colon D_{s^*}\to D_s$ by
\[\til \alpha_s(f)(\ran(\gamma)) = \alpha_{\gamma}(f(\dom(\gamma)))\] for $\gamma\in s$ and $f\in D_{s^*}$.

\begin{Prop}\label{p:action.ok}
The mapping $\til\alpha_s\colon D_{s^*}\to D_s$ is an isomorphism of rings.
\end{Prop}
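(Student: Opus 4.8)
The plan is to check directly that $\til\alpha_s$ is a well-defined ring homomorphism with two-sided inverse $\til\alpha_{s^*}$. The first order of business is well-definedness: given $f\in D_{s^*}=A(\ran(s^*))=A(\dom(s))$, the formula $\til\alpha_s(f)(\ran(\gamma))=\alpha_\gamma(f(\dom(\gamma)))$ for $\gamma\in s$ defines a function on $\ran(s)$ because $\ran|_s\colon s\to\ran(s)$ is a bijection (indeed a homeomorphism, as $s$ is a bisection); every point of $\ran(s)$ is $\ran(\gamma)$ for a unique $\gamma\in s$, and since $\dom|_s\colon s\to\dom(s)=\ran(s^*)$ is a homeomorphism, $f(\dom(\gamma))$ makes sense and lies in $\mathcal O_{\dom(\gamma)}$, so $\alpha_\gamma(f(\dom(\gamma)))\in\mathcal O_{\ran(\gamma)}$ as required for a section over $\ran(s)$. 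I would then note that $\til\alpha_s(f)$ is continuous: it is the composite of $(\ran|_s)\inv\colon \ran(s)\to s$, the map $\gamma\mapsto(\gamma,f(\dom(\gamma)))$ into $\mathscr G\skel 1\times_{\dom,p}E$ (continuous since $\dom|_s$, $f$ are continuous and using the pullback topology), and $\alpha$; hence $\til\alpha_s(f)$ is a continuous section, supported on the compact open set $\ran(s)$, so $\til\alpha_s(f)\in\Gamma_c(\ran(s),\mathcal O|_{\ran(s)})=D_s$.

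Next I would verify the algebraic properties. Additivity: $\til\alpha_s(f+g)(\ran(\gamma))=\alpha_\gamma(f(\dom\gamma)+g(\dom\gamma))=\alpha_\gamma(f(\dom\gamma))+\alpha_\gamma(g(\dom\gamma))$ since each $\alpha_\gamma$ is a ring homomorphism by (SR4); this equals $(\til\alpha_s(f)+\til\alpha_s(g))(\ran\gamma)$ because addition in $D_s$ is pointwise. Multiplicativity is identical, using that $\alpha_\gamma$ preserves products and that multiplication in $\Gamma_c(\mathscr G\skel 0,\mathcal O)$ is pointwise (as observed in the excerpt, $(f\ast g)(x)=f(x)g(x)$ for sections supported on $\mathscr G\skel 0$). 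One should also record that $\til\alpha_s$ carries the identity $\chi_{\ran(s^*)}=\chi_{\dom(s)}$ of $D_{s^*}$ to $\chi_{\ran(s)}=\chi_{D_s}$: indeed $\til\alpha_s(\chi_{\dom(s)})(\ran\gamma)=\alpha_\gamma(1_{\dom(\gamma)})=1_{\ran(\gamma)}$ by (SR4).

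Finally, I would show $\til\alpha_{s^*}\circ\til\alpha_s=\mathrm{id}_{D_{s^*}}$ and $\til\alpha_s\circ\til\alpha_{s^*}=\mathrm{id}_{D_s}$, which gives that $\til\alpha_s$ is a bijection and hence a ring isomorphism. For $\gamma\in s$ we have $\gamma\inv\in s^*$ with $\dom(\gamma\inv)=\ran(\gamma)$ and $\ran(\gamma\inv)=\dom(\gamma)$; thus for $f\in D_{s^*}$,
\[
(\til\alpha_{s^*}\til\alpha_s(f))(\dom(\gamma))=\alpha_{\gamma\inv}\bigl(\til\alpha_s(f)(\ran(\gamma))\bigr)=\alpha_{\gamma\inv}\bigl(\alpha_\gamma(f(\dom(\gamma)))\bigr)=f(\dom(\gamma)),
\]
using that $\alpha_{\gamma\inv}$ is the inverse bijection of $\alpha_\gamma$ on stalks (axiom (S1)/(S3)); and as $\gamma$ ranges over $s$, $\dom(\gamma)$ ranges over all of $\dom(s)=\ran(s^*)$, so $\til\alpha_{s^*}\til\alpha_s(f)=f$. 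The other composite is symmetric. I do not anticipate a genuine obstacle here; the one point requiring a little care is the continuity of $\til\alpha_s(f)$, where one must assemble the pullback map $\gamma\mapsto(\gamma,f(\dom(\gamma)))$ correctly and invoke that $\ran|_s$ is a homeomorphism — everything else is a routine stalk-wise computation.
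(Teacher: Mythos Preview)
Your proposal is correct and follows essentially the same approach as the paper: check well-definedness and continuity of $\til\alpha_s(f)$ via the homeomorphism $\ran|_s$ and the continuity of $\alpha$, verify that $\til\alpha_s$ is a ring homomorphism stalkwise using (SR4), and then show $\til\alpha_{s^*}$ is a two-sided inverse by the computation $\alpha_{\gamma^{-1}}\alpha_\gamma=\mathrm{id}$. The only cosmetic differences are that you additionally record preservation of the unit and you verify the homomorphism properties before the inverse, whereas the paper does it in the opposite order.
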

\begin{proof}
First notice that, since $s$ is a bisection, there are no two $\gamma, \gamma'$ in $s$ with $\ran(\gamma) = \ran(\gamma')$. So the choice of $\gamma$ in the definition of $\til \alpha_s$ is unique. Next we check the continuity of $\til \alpha_s(f)$.  Clearly, $\gamma\mapsto \alpha(\gamma,f(\dom(\gamma)))=\til \alpha_s(f)(\ran(\gamma))$ is continuous from $s$ to $E$.  As $\ran|_s\colon s\to \ran(s)$ is a homeomorphism, it follows that $\til \alpha_s$ is continuous.  Note that $p(\til\alpha_s(f)(\ran(\gamma))) =p(\alpha_{\gamma}(\dom(\gamma))) = \ran(\gamma)$ and so $\til\alpha_s(f)$ is a section.   Thus $\til\alpha_s\colon D_{s^*}\to D_{s}$ is well defined.

Observe next that \[\til\alpha_{s^*}(\til\alpha_s(f))(\dom(\gamma))=\alpha_{\gamma\inv}(\til\alpha_s(f)(\ran(\gamma))) = \alpha_{\gamma\inv}\alpha_{\gamma}(f(\dom(\gamma))) = f(\dom(\gamma))\] for $\gamma\in s$ and so $\til\alpha_{s^*}\circ \til\alpha_s$ is the identity on $D_{s^*}$.  Interchanging the roles of $s^*$ and $s$ shows that $\til\alpha_s$ and $\til\alpha_{s^*}$ are inverse bijections.

Finally, we verify that $\til\alpha_s$ is a ring homomorphism.  Indeed, if $f,g\in D_{s^*}$ and $\gamma\in s$, then we have $\til\alpha_s(f+g)(\ran(\gamma)) = \alpha_{\gamma}(f(\dom(\gamma))+g(\dom(\gamma))) = \alpha_{\gamma}(f(\dom(\gamma)))+\alpha_{\gamma}(g(\dom(\gamma))) = \til\alpha_s(f)(\ran(\gamma))+\til\alpha_s(g)(\ran(\gamma))$.  Similarly, we have that $\til \alpha_s(fg)(\ran(\gamma))
= \til\alpha_s(f)(\ran(\gamma))\til\alpha_s(g)(\ran(\gamma))$ and so $\til\alpha_s$ is a ring homomorphism.
\end{proof}

Next we check that $\til\alpha\colon S\to I_A$ given by $\til\alpha(s) = \til\alpha_s$ is a homomorphism of inverse semigroups.

\begin{Prop}\label{p:action.ok2}
The mapping $\til\alpha\colon S\to I_A$ is a homomorphism.  Moreover, the action of $S$ on $A$ is non-degenerate and spectral.
\end{Prop}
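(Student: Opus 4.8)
The plan is to check that $\til\alpha$ is a homomorphism by invoking the criterion for homomorphisms of inverse semigroups recalled in Section~\ref{ample groupoids} (from~\cite[Chapter~3, Theorem~5]{Lawson}): a map of inverse semigroups is a homomorphism as soon as it is order preserving, restricts to a homomorphism on idempotents, and is multiplicative on pairs $s,t$ with $s^*s=tt^*$. Working with this criterion, rather than verifying $\til\alpha_{st}=\til\alpha_s\circ\til\alpha_t$ directly, is advantageous because it confines the computation to the case in which the domains of the partial automorphisms involved line up exactly, avoiding the bookkeeping of how domains interact under composition in $I_A$.

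Throughout I would use the dictionary $s^*s=\dom(s)$ and $ss^*=\ran(s)$ (as compact open subsets of $\mathscr G\skel 0$), so that $D_{s^*}=A(\dom(s))$ and $D_s=A(\ran(s))$, together with the evident identity $A(U)\cap A(V)=A(U\cap V)$. For the idempotents: if $e\in E(S)$ then every $\gamma\in e$ is a unit, so $\alpha_\gamma$ is the identity map of its stalk and $\til\alpha_e$ is literally the identity endomorphism of $D_e=A(e)$; since composition of such partial identities satisfies $\mathrm{id}_{A(e)}\circ\mathrm{id}_{A(f)}=\mathrm{id}_{A(e)\cap A(f)}=\mathrm{id}_{A(ef)}$, the restriction $\til\alpha|_{E(S)}$ is a homomorphism into the idempotents of $I_A$. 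For order-preservation: if $s\leq t$ then $s\subseteq t$ as subsets of $\mathscr G\skel 1$, so $D_{s^*}\subseteq D_{t^*}$; and if $f\in D_{s^*}$ and $\gamma\in t$ with $\ran(\gamma)\notin\ran(s)$, then injectivity of $\dom|_t$ forces $\dom(\gamma)\notin\dom(s)$ (otherwise the element of $s$ with that domain would coincide with $\gamma$), whence $f(\dom(\gamma))=0$; thus $\til\alpha_t(f)$ is supported on $\ran(s)$ and agrees there with $\til\alpha_s(f)$, so $\til\alpha_s$ is the restriction of $\til\alpha_t$ to $D_{s^*}$.

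For the multiplicativity, suppose $s^*s=tt^*$, i.e.\ $\dom(s)=\ran(t)$; then one checks directly that $\dom(st)=\dom(t)$ and $\ran(st)=\ran(s)$, so the domains of $\til\alpha_{st}$ and of $\til\alpha_s\circ\til\alpha_t$ both equal $D_{t^*}$ (and their ranges both equal $D_s$). For $f\in D_{t^*}$ and $x=\ran(\delta)$ with $\delta=\beta\rho\in st$, $\beta\in s$, $\rho\in t$, $\dom(\beta)=\ran(\rho)$, sheaf axiom (S3) gives $\til\alpha_{st}(f)(x)=\alpha_{\beta\rho}(f(\dom\rho))=\alpha_\beta(\alpha_\rho(f(\dom\rho)))$, while $\til\alpha_t(f)(\dom\beta)=\til\alpha_t(f)(\ran\rho)=\alpha_\rho(f(\dom\rho))$ and hence $\til\alpha_s(\til\alpha_t(f))(x)=\alpha_\beta(\alpha_\rho(f(\dom\rho)))$; the two agree, completing the verification that $\til\alpha$ is a homomorphism.

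For non-degeneracy, note that a function in $A=\Gamma_c(\mathscr G\skel 0,\mathcal O)$ supported on an idempotent $e\in E(S)$ is precisely an element of $D_e$, so $\sum_{e\in E(S)}D_e=A$ amounts to saying $A$ is spanned by functions supported on elements of $E(S)$. Viewing $\mathscr G\skel 0$ as an ample groupoid with only units, $E(S)$ is an inverse subsemigroup of $(\mathscr G\skel 0)^a$, and it satisfies (G1) relative to $\mathscr G\skel 0$: given $x\in\mathscr G\skel 0$, (G1) for $S$ provides $V\in S$ with $x\in V$, and then $x=\dom(x)\in V^*V\in E(S)$. So Proposition~\ref{p:generated} applies and yields non-degeneracy. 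For spectrality it remains to produce a unit in each $D_e$: since $e$ is compact, Proposition~\ref{p:unital} (applied to the ample groupoid $e$ with only units and the restricted sheaf $\mathcal O|_e$) shows $D_e=\Gamma_c(e,\mathcal O|_e)$ is unital with identity the unit section $\chi_e$, and a one-line check using that multiplication in $A$ is pointwise confirms $\chi_e f=f=f\chi_e$ for $f$ supported on $e$; hence $1_e=\chi_e$. The only mildly delicate points are the bisection bookkeeping in the order step and confirming that the domains come out on the nose in the $s^*s=tt^*$ case, and I do not expect any of this to present a genuine obstacle.
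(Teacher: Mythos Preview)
Your proof is correct and follows essentially the same approach as the paper: the same three-part criterion from~\cite{Lawson} for checking that $\til\alpha$ is a homomorphism, the same identification of $\til\alpha_e$ as the identity on $D_e$, the same use of (S3) for the multiplicative step when $s^*s=tt^*$, and the same reduction of non-degeneracy to Proposition~\ref{p:generated} applied to the unit groupoid $\mathscr G\skel 0$ together with the observation that $x\in s$ implies $x\in s^*s$. Your order-preserving argument is actually a touch more thorough than the paper's, since you also verify that $\til\alpha_t(f)$ vanishes on $\ran(t)\setminus\ran(s)$ for $f\in D_{s^*}$, a detail the paper leaves implicit; and your appeal to Proposition~\ref{p:unital} for the unitality of $D_e$ is fine, though the paper simply notes this directly (just before the proposition) by exhibiting $\chi_{\ran(s)}$ as the identity of $D_s$.
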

\begin{proof}
To check that $\til\alpha$ is a homomorphism it suffices to check that $\til\alpha$ is order preserving, $\til \alpha(ef) = \til\alpha(e)\til\alpha(f)$ for $e,f\in E(S)$  and $\til\alpha(st) = \til\alpha(s)\til\alpha(t)$ whenever $s^*s=tt^*$.  If $s\leq t$, then $D_s\subseteq D_t$ and $D_{s^*}\subseteq D_{t^*}$ by definition.  If $f\in D_{s^*}$ and $\gamma\in s$, then $\gamma\in t$ and $\til\alpha_s(f)(\ran(\gamma)) = \alpha_{\gamma}(f(\dom(\gamma)))=\til\alpha_t(f)(\ran(\gamma))$.  Thus $\til\alpha(s)\leq \til\alpha(t)$.

 If $e\in E(S)$, then $D_e = A(e)$ and if $f\colon e\to E$ is a section, then $\til\alpha_e(f)(x) = \alpha_x(f(x)) = f(x)$ and so $\til\alpha_e$ is the identity on $D_e$.  Similarly, $D_f= A(f)$ and $\til\alpha_f$ is the identity on $D_f$.  So $\til\alpha_e\til\alpha_f$ is the identity on $D_e\cap D_f$, which consists of those functions supported on $e\cap f=ef$.  Thus $D_e\cap D_f=D_{ef}$ and $\til\alpha_{ef} = \til\alpha_e\til\alpha_f$.   Finally, if $s^*s=tt^*=e$, then $\til\alpha_s\colon D_e\to D_s$ and $\til\alpha_t\colon D_{t^*}\to D_e$.  But $(st)^*(st) = t^*t$ and $(st)(st)^* = ss^*$ and so $\til\alpha_{st}\colon D_{t^*}\to D_s$.  If $f\in D_{t^*}$ and $x\in \ran(s) = \ran(st)$, then we can find a unique $\gamma\in t$ and $\beta\in s$ with $\ran(\beta) =x$ and $\dom(\beta)=\ran(\gamma)$.  Then $\til\alpha_{st}(f)(x) = \alpha_{\beta\gamma}(f(\dom(\gamma))) = \alpha_{\beta}(\alpha_{\gamma}(f(\dom(\gamma))) = \alpha_{\beta}(\til \alpha_t(f)(\ran(\gamma))) = \til\alpha_s(\til\alpha_t(f))(x)$ and so $\til\alpha_{st} = \til\alpha_s\til\alpha_t$.

Since each $D_e$ with $e\in E(S)$ is unital, it remains to show that the action is non-degenerate.  By Proposition~\ref{p:generated} applied to $\mathscr G\skel 0$, viewed as an ample groupoid of identity arrows, we just need to show that each $x\in \mathscr G\skel 0$ belongs to some $e\in E(S)$.  By (G1), we have that $x\in s$ for some $s\in S$.  But then $x\in s^*s\in E(S)$, as was required.
This completes the proof.
\end{proof}

We can now form the skew inverse semigroup ring $A\rtimes S$.   In this section, we shall define a natural quotient map $A\rtimes S\to \Gamma_c(\mathscr G,\mathcal O)$.  For convenience, let us put $R=\Gamma_c(\mathscr G,\mathcal O)$ for the remainder of this section.  We define a covariant system $(R,\wh \theta,\wh \p)$.  The mapping $\wh\theta$ will just be the inclusion of $A=\Gamma_c(\mathscr G\skel 0,\mathcal O)$ into $R=\Gamma_c(\mathscr G,\mathcal O)$.  Define $\wh\p (s) = \chi_s$ for $s\in S$.

\begin{Prop}\label{p:is.covariant}
The triple $(R,\wh \theta,\wh \p)$ is a covariant system where $R=\Gamma_c(\mathscr G,\mathcal O)$.  Moreover, the induced homomorphism $\wh\theta\rtimes \wh \p\colon A\rtimes S\to R$ is surjective.
\end{Prop}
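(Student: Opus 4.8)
The plan is to verify the two covariant-system axioms (C1) and (C2) for $(R,\wh\theta,\wh\p)$ with respect to the spectral action $\til\alpha$ of Propositions~\ref{p:action.ok} and~\ref{p:action.ok2}, and then to read off surjectivity from Proposition~\ref{p:generated}. That $\wh\theta$ is a ring homomorphism is automatic, since it is the inclusion of the subring $A=\Gamma_c(\mathscr G\skel 0,\mathcal O)$ into $R$ (Proposition~\ref{p:is.ring}); that $\wh\p$ is a semigroup homomorphism is Proposition~\ref{p:inverse.embed}, which gives $\wh\p(s)\ast\wh\p(t)=\chi_s\ast\chi_t=\chi_{st}=\wh\p(st)$. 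Axiom (C2) is immediate: for $e\in E(S)$ we have $e\subseteq\mathscr G\skel 0$, the ideal $D_e=A(e)$ has identity $1_e=\chi_e$, and $\wh\p(e)=\chi_e$ by definition, so $\wh\theta(1_e)=\chi_e=\wh\p(e)$.

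The substantive step is (C1): for $s\in S$ and $a\in D_{s^*}$ (a section supported on $\dom(s)=\ran(s^*)\subseteq\mathscr G\skel 0$, identified with $\wh\theta(a)\in R$) one must show $\chi_s\ast a\ast\chi_{s^*}=\til\alpha_s(a)$. I would compute the convolution from the inside out. Since $a$ is supported on the unit space, $(a\ast\chi_{s^*})(\gamma)=a(\ran(\gamma))\,\alpha_{\ran(\gamma)}(\chi_{s^*}(\gamma))$, which equals $a(\ran(\gamma))$ for $\gamma\in s^*$ and $0$ otherwise; thus $a\ast\chi_{s^*}$ is the function supported on $s^*$ with value $a(\ran(\gamma))$ at $\gamma$. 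Then, using that $s$ and $s^*$ are bisections, any factorization $\gamma=\beta\rho$ with $\beta\in s$, $\rho\in s^*$ forces $\rho=\beta\inv$ and $\gamma=\dom(\beta)$, with $\beta$ the unique element of $s$ lying over the unit $\gamma\in\ran(s)=ss^*$; hence
\[(\chi_s\ast a\ast\chi_{s^*})(\gamma)=1_{\ran(\beta)}\,\alpha_\beta\bigl(a(\dom(\beta))\bigr)=\alpha_\beta\bigl(a(\dom(\beta))\bigr)\]
for $\gamma\in\ran(s)$, and $0$ elsewhere. By the very definition of $\til\alpha_s$, this is exactly $\til\alpha_s(a)$; in particular the right-hand side lies in $D_s\subseteq A$, so $\wh\theta(\til\alpha_s(a))=\chi_s\ast a\ast\chi_{s^*}$, which is (C1). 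Thus $(R,\wh\theta,\wh\p)$ is a covariant system, and Theorem~\ref{t:adjunction} furnishes the homomorphism $\pi=\wh\theta\rtimes\wh\p\colon A\rtimes S\to R$, determined by $\pi(a\delta_s+\mathcal N)=\wh\theta(a)\ast\wh\p(s)=a\ast\chi_s$ for $a\in D_s$.

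For surjectivity, the same bookkeeping as above (now without the $\chi_{s^*}$ factor) shows that for $a\in D_s$, i.e.\ a section over the compact open set $\ran(s)$, the element $a\ast\chi_s\in R$ is precisely the function supported on $s$ whose value at $\gamma\in s$ is $a(\ran(\gamma))$; this is the element written $a\chi_s$ in the notation of Section~\ref{s:sheaf.coeff}. Conversely, every function supported on some $s\in S$ has this form — the relevant section is $f\circ(\ran|_s)\inv$, which is automatically compactly supported because $\ran(s)$ is compact open, and hence lies in $D_s$ — so it lies in the image of $\pi$. Since the image of $\pi$ is an additive subgroup of $R$, and by Proposition~\ref{p:generated} (this is where hypothesis (G1) enters) such functions span $R$, we conclude that $\pi$ is onto.

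The main obstacle is the care needed in the (C1) convolution: one must correctly identify which $\gamma$ admit a factorization $\beta\rho$ with $\beta\in s$ and $\rho\in s^*$, using the bisection property of $s$, and must keep the sheaf compatibility $g(\rho)\in\mathcal O_{\ran(\rho)}=\mathcal O_{\dom(\beta)}$ in view so that $\alpha_\beta$ may be applied; everything else is formal once Propositions~\ref{p:inverse.embed} and~\ref{p:generated} and Theorem~\ref{t:adjunction} are available.
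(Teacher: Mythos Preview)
Your proof is correct and follows essentially the same route as the paper's: verify that $\wh\theta,\wh\p$ are homomorphisms via Propositions~\ref{p:is.ring} and~\ref{p:inverse.embed}, check (C2) directly, compute the convolution $\chi_s\ast a\ast\chi_{s^*}$ to recover $\til\alpha_s(a)$ for (C1), and then deduce surjectivity from Proposition~\ref{p:generated} together with the identification $a\ast\chi_s=a\chi_s$ for $a\in D_s$. One small slip: in the (C1) paragraph you write ``$\gamma=\dom(\beta)$,'' but $\beta\beta\inv$ is the identity at $\ran(\beta)$, not $\dom(\beta)$; your very next clause (``$\gamma\in\ran(s)=ss^*$'') and the displayed computation already use the correct identification $\gamma=\ran(\beta)$, so this is only a typo and does not affect the argument.
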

\begin{proof}
We first check the covariance axioms.  Clearly, $\wh\theta$ is a ring homomorphism.  Also $\wh \p(s)\wh \p(t) = \chi_s\ast \chi_t=\chi_{st}$ by Proposition~\ref{p:inverse.embed}.  Let $f\colon \mathscr G\skel 0\to E$ be a section supported on $\dom(s)$ with $s\in S$.  Then we compute $\chi_s\ast f\ast \chi_{s^*}$.  It will be supported on units $x$ in $s\dom(s)s^* = \ran(s)$.  For $x\in \ran(s)$, let $\gamma\in s$ with $\ran(\gamma)=x$.  Then we have that \[\chi_s\ast f\ast\chi_{s^*}(x) = \chi_s(\gamma)\alpha_{\gamma}(f(\dom(\gamma))\chi_{s^*}(\gamma\inv))=\alpha_{\gamma}(f(\dom(\gamma)))=\til\alpha_s(f)(x)\] as required.  Finally, if $e\in E(S)$, then $\chi_e$ is the identity of $A(e)$ and so $\wh \p(e) = \chi_e=\wh \theta(1_e)$.

Let us verify that $\wh \theta\rtimes \wh\p\colon A\rtimes S\to R$ is onto.  By Proposition~\ref{p:generated}, $R$ is generated by mappings $f$ supported on an element of $S$.  As observed earlier, any such function can be written in the form $g\chi_s$ where $g=f\circ (\ran|s)\inv \colon \ran(s)\to E$ is a continuous section, i.e., $g\in A(\ran(s)) = D_s$.  Then we have $(\wh\theta\rtimes \wh\p)(g\delta_s+\mathcal N) = g\ast \chi_s$.  But
\[g\ast \chi_s(\gamma) = \begin{cases}g(\ran(\gamma)),  & \text{if}\ \gamma\in s\\ 0, & \text{else.}\end{cases}\]  Thus $g\ast\chi_s = g\chi_s=f$.  This completes the proof.
\end{proof}

\section{The disintegration theorem}\label{desintegration}
Let $\mathscr G$ be an ample groupoid and $\mathcal O=(E,p,\alpha)$ a $\mathscr G$-sheaf of rings.  Put $R=\Gamma_c(\mathscr G,\mathcal O)$; it is a ring with local units.  A (left) $R$-module $M$ is \emph{unitary} if $RM=M$.  We denote by $\module{R}$ the category of unitary (left) $R$-modules.  Our goal in this section is to generalize the disintegration theorem of~\cite{groupoidbundles} and prove that $\module{R}$ can be identified with the category of $\mathscr G$-sheaves of $\mathcal O$-modules.  In the next section we will show that these categories correspond to unitary covariant systems for $A=\Gamma_c(\mathscr G\skel 0,\mathcal O)$ and $S$, where $S\leq \mathscr G^a$ satisfies the germ conditions.  This will then be used to complete the proof that $A\rtimes S\cong R$.

A \emph{$\mathscr G$-sheaf of $\mathcal O$-modules} $\mathcal M=(F,q,\beta)$ is a $\mathscr G$-sheaf such that each stalk $\mathcal M_x$ has a (unitary) left $\mathcal O_x$-module structure such that:
\begin{enumerate}
  \item [(SM1)] addition $+\colon F\times_{q,q} F\to F$ is continuous;
  \item [(SM2)] the  module action $E\times_{p,q} F\to F$ is continuous;
  \item [(SM3)] $\beta_{\gamma}(rm) = \alpha_{\gamma}(r)\beta_{\gamma}(m)$ for all $r\in \mathcal O_{\dom(\gamma)}$ and $m\in \mathcal M_{\dom(\gamma)}$.
\end{enumerate}

The condition (SM3) basically says that $\beta_{\gamma}\colon \mathcal M_{\dom(\gamma)}\to \mathcal M_{\ran(\gamma)}$ is a module homomorphism once we identify $\mathcal O_{\dom(\gamma)}$ and $\mathcal O_{\ran(\gamma)}$ via the isomorphism $\alpha_{\gamma}$. One can again prove that the zero section and negation are continuous~\cite{Dowker}.   Note that $\mathcal M$ is a sheaf of $\mathcal O$-modules on $\mathscr G\skel 0$ in the classical sense of sheaf theory.  A morphism of $\mathscr G$-sheaves of $\mathcal O$-modules is a morphism of $\mathscr G$-sheaves $h\colon \mathcal M\to \mathcal N$ that restricts to an $\mathcal O_x$-module homomorphism $\mathcal M_x\to \mathcal N_x$ for all $x\in \mathscr G\skel 0$.

We shall use the following proposition without comment.

\begin{Prop}\label{p:lots.of.secs}
Let $X$ be a Hausdorff space with a basis of compact open sets and let $\mathcal A=(F,q)$ be a sheaf of abelian groups on $X$ (i.e., a sheaf of $\Delta(\mathbb Z)$-modules).    Then, for each $f\in \mathcal A_x$, there is a section $s\colon X\to F$ with compact support such that $s(x)=f$.
\end{Prop}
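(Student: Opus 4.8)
The plan is to reduce the statement to two standard facts about étale spaces over a Hausdorff space with a basis of compact open sets: first, that germs extend to sections over a small neighborhood, and second, that such a local section can be truncated to a global compactly supported section. Concretely, given $f\in \mathcal A_x$, I would first use that $q\colon F\to X$ is a local homeomorphism to produce an open neighborhood $W$ of $f$ in $F$ on which $q$ restricts to a homeomorphism onto an open neighborhood of $x$; composing with the inverse gives a continuous local section $t\colon q(W)\to F$ with $t(x)=f$. Since $X$ has a basis of compact open sets, I may shrink $q(W)$ to a compact open neighborhood $U$ of $x$ contained in $q(W)$, and restrict $t$ to $U$.

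The second step is to extend $t|_U$ by zero to all of $X$. Here I would define $s\colon X\to F$ by $s(y)=t(y)$ for $y\in U$ and $s(y)=0_y$ (the identity of the abelian group $\mathcal A_y$) for $y\notin U$. The support of $s$ is contained in $U$, hence compact, and $s(x)=t(x)=f$ as desired. The one thing requiring justification is continuity of $s$. On $U$ it agrees with the continuous map $t$, and $U$ is open. On $X\setminus U$ it agrees with the zero section $y\mapsto 0_y$; since the zero section of a sheaf of abelian groups is continuous (a standard fact, cf.~\cite{Dowker}, already invoked in the excerpt), and $X\setminus U$ is open because $U$ is compact in a Hausdorff space and hence closed, $s$ is continuous there too. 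As $U$ and $X\setminus U$ form an open cover of $X$, continuity of $s$ follows by gluing.

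The only genuine subtlety — and the point I would flag as the main obstacle — is that the extension by zero is continuous precisely because $U$ is \emph{clopen}: openness of $U$ is what lets $t|_U$ be a valid open piece of the glued map, and closedness of $U$ (equivalently, openness of its complement) is what lets the zero section be the other open piece. This is exactly why the hypothesis that $X$ has a basis of \emph{compact} open sets (with $X$ Hausdorff, so compact implies closed) is needed, rather than merely a basis of open sets; without it one could only extend over a neighborhood, not globally with compact support. Everything else is routine unwinding of the definition of an étale space and of the sheaf of abelian groups structure, so I would keep the write-up short, citing \cite{Dowker} or \cite{Pierce} for the continuity of the zero section and for the general extension-by-zero principle.
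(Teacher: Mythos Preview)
Your proof is correct and follows essentially the same route as the paper: pick a local section through $f$ over a compact open (hence clopen) neighborhood of $x$, then extend by the zero section and use that the two pieces are defined on complementary open sets. The only cosmetic difference is that the paper chooses a compact open neighborhood $W$ of $f$ in $F$ first (noting $F$ inherits a basis of compact opens via the local homeomorphism $q$) so that $q(W)$ is already compact open, whereas you take $W$ open and then shrink $q(W)$ down in $X$; the resulting argument is the same.
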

\begin{proof}
Since $q$ is a local homeomorphism and $X$ has a basis of compact open sets, so does $F$.  Choose a compact open neighborhood $W$ of $f$ such that $q|_W\colon W\to q(W)$ is a homeomorphism.  Define $s\colon X\to F$ by
\[s(y) = \begin{cases}  (q|_W)\inv (y), & \text{if}\ y\in q(W)\\ 0_y, & \text{else.}\end{cases}\]   Note that $q(W)$ is compact open and hence clopen since $X$ is Hausdorff.  Thus $s$ is continuous as its restriction to the open set $q(W)$ is $(q|_W)\inv$ and its restriction to the disjoint open set $X\setminus q(W)$ is the zero section, which is continuous.  Since $x=q(f)\in q(W)$, we deduce that $s(x) = (q|_W)\inv(q(f)) = f$.  The support of $s$ is contained in the compact set $q(W)$ and hence is compact (being closed).  This completes the proof.
\end{proof}

If $\mathcal M$ is a $\mathscr G$-sheaf of $\mathcal O$-modules, then we can look at the set $M=\Gamma_c(\mathscr G,\mathcal M)$ of continuous sections $s\colon \mathscr G\skel 0\to F$ with compact support. This is an abelian group with pointwise operations.  We define an $R$-module structure on it by putting, for $f\in R$ and $m\in M$,
\[(fm)(x) = \sum_{\gamma\in \ran\inv(x)}f(\gamma)\beta_{\gamma}(m(\dom(\gamma))).\]
Since $f$ is non-zero on only finitely many elements of $\ran\inv(x)$ by Proposition~\ref{p:finiteness.prop}, this sum is finite.   To check that $fm$ is continuous with compact support it suffices to consider the case when $f=s\chi_U$ with $U\in \mathscr G^a$ and $s\colon \ran(U)\to E$ a section, as these generate $\Gamma_c(\mathscr G,\mathcal O)$. Let $h=(\ran|_U)\inv\colon \ran(U)\to U$.  Then \[(s\chi_Um)(x) = \begin{cases}s(x)\beta_{h(x)}(m(\dom(h(x)))), & \text{if}\ x\in \ran(U)\\ 0_x, & \text{else}\end{cases}\] which is continuous with compact support as $\ran(U)$ is compact open, $\mathscr G\skel 0$ is Hausdorff and $s$, $\beta$, $m$, $h$, and $\dom$ are continuous.

 If $r\colon \mathcal M\to \mathcal N$ is a morphism of sheaves of $\mathcal O$-modules, then $s\mapsto r\circ s$ is an $R$-module homomorphism $\Gamma_c(\mathscr G,\mathcal M)\to \Gamma_c(\mathscr G,\mathcal N)$.

\begin{Prop}\label{p:is.unitary}
The construction $\mathcal M\longmapsto \Gamma_c(\mathscr G,\mathcal M)$ is a functor from the category of $\mathscr G$-sheaves of $\mathcal O$-modules to the category of $\module{\Gamma_c(\mathscr G,\mathcal O)}$.
\end{Prop}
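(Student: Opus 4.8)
The plan is to verify, in order, that the prescribed action makes $M:=\Gamma_c(\mathscr G,\mathcal M)$ an $R$-module (with $R=\Gamma_c(\mathscr G,\mathcal O)$), that this module is unitary, and that $\Gamma_c(\mathscr G,-)$ respects morphisms, identities and composition; the fact that each $fm$ is a section of $\mathcal M$ with compact support was already established above, and all sums occurring below are finite by Proposition~\ref{p:finiteness.prop}.

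\textbf{Module axioms.} The identities $(f+g)m=fm+gm$ and $f(m+m')=fm+fm'$ hold because, evaluated at any $x\in\mathscr G\skel 0$, they reduce to the additivity of each $\beta_\gamma$ and the distributive laws in the $\mathcal O_x$-module $\mathcal M_x$ applied termwise to finite sums. (Incidentally, additivity in $f$ is precisely what reduces the continuity check for $fm$ to the generating case $f=s\chi_U$ treated above.) For associativity $(f\ast g)m=f(gm)$, fix $x\in\mathscr G\skel 0$ and expand: by (SM3) together with (S3) one has $\beta_\gamma\bigl(g(\rho)\beta_\rho(n)\bigr)=\alpha_\gamma(g(\rho))\beta_{\gamma\rho}(n)$ whenever $\gamma,\rho$ are composable, so
\[(f(gm))(x)=\sum f(\gamma)\,\alpha_\gamma(g(\rho))\,\beta_{\gamma\rho}\bigl(m(\dom(\gamma\rho))\bigr),\]
the sum taken over all composable pairs $(\gamma,\rho)$ with $\ran(\gamma)=x$. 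On the other side, unfolding the definition \eqref{eq:define.conv} of $f\ast g$ inside $((f\ast g)m)(x)$ and replacing the double sum over triples $\bigl(\sigma,(\gamma,\rho)\bigr)$ with $\gamma\rho=\sigma$ and $\ran(\sigma)=x$ by the single sum over composable pairs $(\gamma,\rho)$ with $\ran(\gamma)=x$ produces the identical expression. Hence $(f\ast g)m=f(gm)$.

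\textbf{Unitarity.} Let $m\in M$ and let $K$ be its (compact) support. Since $\mathscr G\skel 0$ has a basis of compact open sets, choose a compact open $U\subseteq\mathscr G\skel 0$ with $K\subseteq U$. For $x\in U$ the only $\gamma\in\ran\inv(x)$ lying in $U$ is the unit $x$ itself, whence $(\chi_U m)(x)=1_x\beta_x(m(x))=m(x)$ by (S1) and unitarity of $\mathcal M_x$; for $x\notin U$ both $(\chi_U m)(x)$ and $m(x)$ equal $0_x$ since $x\notin K$. Thus $\chi_U m=m$, so $m\in RM$, and $RM=M$.

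\textbf{Functoriality.} If $r\colon\mathcal M\to\mathcal N$ is a morphism of $\mathscr G$-sheaves of $\mathcal O$-modules, then $r\circ s$ is a section of $\mathcal N$ whose support lies in $\supp s$ (since $r$ annihilates zeros stalkwise), so $\Gamma_c(\mathscr G,r)\colon s\mapsto r\circ s$ is a well-defined additive map $\Gamma_c(\mathscr G,\mathcal M)\to\Gamma_c(\mathscr G,\mathcal N)$. It is $R$-linear: evaluated at $x$, one commutes $r$ past each $\beta_\gamma$ (as $r$ intertwines the $\mathscr G$-actions), past each coefficient $f(\gamma)$ (as $r$ is an $\mathcal O_x$-module map on stalks) and past the finite sum (by additivity), giving $f\cdot(r\circ s)=r\circ(fs)$. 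Finally $\Gamma_c(\mathscr G,\mathrm{id})=\mathrm{id}$ and $\Gamma_c(\mathscr G,r'r)=\Gamma_c(\mathscr G,r')\Gamma_c(\mathscr G,r)$ are immediate from associativity of composition. The one step requiring genuine care is the associativity identity $(f\ast g)m=f(gm)$, where the bijection between triples $\bigl(\sigma,(\gamma,\rho)\bigr)$ with $\gamma\rho=\sigma$ and composable pairs $(\gamma,\rho)$ is what matches the two reindexed double sums; everything else is routine.
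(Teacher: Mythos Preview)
Your proof is correct and follows essentially the same approach as the paper: the key associativity computation $(f\ast g)m=f(gm)$ hinges on the identity $\beta_\gamma(g(\rho)\beta_\rho(n))=\alpha_\gamma(g(\rho))\beta_{\gamma\rho}(n)$ obtained from (SM3) and (S3), and the unitarity argument via $\chi_U m=m$ for a compact open $U\supseteq\supp(m)$ is identical. You supply more detail than the paper on the distributive laws and on functoriality (which the paper dismisses as ``easy to check''), but the substance is the same.
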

\begin{proof}
Most of the proof is a straightforward adaptation of the arguments in~\cite{groupoidbundles} for the case where $\mathcal O$ is a constant sheaf of commutative rings.  We check here that $\Gamma_c(\mathscr G,\mathcal M)$ is a unitary $R$-module; functoriality is easy to check.  The most difficult detail is that $f(gm) = (f\ast g)m$ for $f,g\in \Gamma_c(\mathscr G,\mathcal O)$ and $m\in \Gamma_c(\mathscr G,\mathcal M)$, the remaining tedious details that $\Gamma_c(\mathscr G,\mathcal M)$ is an $R$-module are left  to the reader.   We compute that
\begin{align*}
(f(gm))(x) &=\sum_{\sigma\in \ran\inv(x)} f(\sigma)\beta_{\sigma}((gm)(\dom(\sigma))) \\ &= \sum_{\sigma\in \ran\inv(x)}\sum_{\tau\in \ran\inv (\dom(\sigma))}f(\sigma)\beta_{\sigma}(g(\tau)\beta_{\tau}(m(\dom(\tau))))
\\ &=\sum_{\sigma\in \ran\inv(x)}\sum_{\tau\in \ran\inv (\dom(\sigma))}f(\sigma)\alpha_{\sigma}(g(\tau))\beta_{\sigma}(\beta_{\tau}(m(\dom(\tau))))
\\ &=\sum_{\sigma\in \ran\inv(x)}\sum_{\tau\in \ran\inv (\dom(\sigma))}f(\sigma)\alpha_{\sigma}(g(\tau))\beta_{\sigma\tau}(m(\dom(\sigma\tau)))\\ &= ((f\ast g)m)(x)
\end{align*}

We briefly check that $\Gamma_c(\mathscr G,\mathcal M)$ is unitary.  Let $m\in \Gamma_c(\mathscr G,\mathcal M)$.  Then since $m$ has compact support, we can find a compact open set $U\subseteq \mathscr G\skel 0$ containing the support of $m$.  Then $\chi_Um =m$, as one immediately verifies.  Thus $\Gamma_c(\mathscr G,\mathcal M)$ is unitary.
\end{proof}

Now we present a functor from $\module{R}$ to the category of $\mathscr G$-sheaves of $\mathcal O$-modules that will be quasi-inverse to the functor $\mathcal M\mapsto \Gamma_c(\mathscr G,\mathcal M)$.  The details are very similar to those in~\cite{groupoidbundles} and so we highlight what is different.
If $M$ is a unitary $R$-module and $x\in \mathscr G\skel 0$, put \[M_x = \varinjlim_{x\in U} \chi_UM\] where the direct limit runs over all compact open neighborhoods $U$ of $x$ in $\mathscr G\skel 0$.  Here, if $U\subseteq V$, then $\chi_V\ast \chi_U=\chi_U=\chi_U\ast \chi_V$ and so we have a restriction homomorphism $\rho^V_U\colon \chi_VM\to \chi_UM$ of abelian groups given by $\rho^V_U(m) = \chi_Um$.  The homomorphisms $\rho^V_U$, as $U,V$ vary, clearly form a directed system.  Thus $M_x$ is an abelian group.  We provide an alternative description that is convenient for computations.   Let \[N_x = \{m\in M\mid \chi_Um=0\ \text{for some}\ U\subseteq \mathscr G\skel 0\ \text{compact open with}\ x\in U\}.\]  Then $N_x$ is an additive subgroup of $M$ for if $\chi_Um=0$ and $\chi_Vn=0$ with $x\in U,V$, then $\chi_{U\cap V}(m+n) = \chi_{U\cap V}\ast \chi_Um+\chi_{U\cap V}\ast \chi_Vn=0$ and $x\in U\cap V$.
We claim that $M/N_x\cong M_x$.
The isomorphism is induced by sending $m+N_x$ to the class of $\chi_Um$ in $M_x$, where $x\in U\subseteq \mathscr G\skel 0$  compact open.  We write $[m]_x$ for the class of $m\in M$ in $M_x$, which we identify with $M/N_x$ from now on.  Note that if $m\in M$ and $U\subseteq \mathscr G\skel 0$ is compact open neighborhood of $x$, then $m-\chi_Um\in N_x$ and so $[m]_x=[\chi_Um]_x$.  Moreover, $[m]_x=[n]_x$ if and only if there is a compact open neighborhood $U$ of $x$ in $\mathscr G\skel 0$ with $\chi_Um=\chi_Un$.

Define \[F= \coprod_{x\in \mathscr G\skel 0} M_x\] and $q\colon F\to \mathscr G\skel 0$ by $q([m]_x) = x$.  Put a topology on $F$ by taking as a basis all sets of the form
\[D(m,U) = \{[m]_x\mid x\in U\},\] where $m\in M$ and $U\subseteq \mathscr G\skel 0$ is compact open.  It is easy to check that $q\colon D(m,U)\to U$ is a homeomorphism.  To define the $\mathscr G$-sheaf structure, for $\gamma\in \mathscr G$, put \[\beta_{\gamma}([m]_{\dom(\gamma)}) = [\chi_Um]_{\ran(\gamma)}\] where $U$ is any compact open bisection containing $\gamma$.  It is a straightforward adaptation of the proof in~\cite{groupoidbundles} that $\beta\colon \mathscr G\skel 1\times_{\dom,q} F\to F$ is well defined and continuous, and turns $$\mathrm{Sh}(M) = (F,q,\beta)$$ into a $\mathscr G$-sheaf.  For instance, to see that $\beta_{\gamma}$ is independent of the choice of $U$, suppose that $\gamma\in U,V$ with $U,V\in \mathscr G^a$.  Then, as $\mathscr G^a$ is a basis for the topology on $\mathscr G\skel 1$, we can find $W\in \mathscr G^a$ with $\gamma\in W\subseteq U\cap V$.  Then $WW\inv U = W=WW\inv V$ and $\ran(\gamma)\in WW\inv$.  Thus we have that
$[\chi_Um]_{\ran(\gamma)} = [\chi_{WW\inv}(\chi_Um)]_{\ran(\gamma)} = [\chi_Wm]_{\ran(\gamma)} = [\chi_{WW\inv}(\chi_Vm)]_{\ran(\gamma)} = [\chi_Vm]_{\ran(\gamma)}$.  Similarly, if we fix $U\in \mathscr G^a$ with $\gamma\in U$ and if $[m]_{\dom(\gamma)} = [n]_{\dom(\gamma)}$, then we can find $V\subseteq \mathscr G\skel 0$ compact open with $\dom(\gamma)\in V$ and $\chi_Vm=\chi_Vn$.  Then we have that $\gamma\in UV$ and so, by the previous verification, we have that $[\chi_Um]_{\ran(\gamma)} = [\chi_{UV}m]_{\ran(\gamma)} = [\chi_U(\chi_Vm)]_{\ran(\gamma)} = [\chi_U(\chi_Vn)]_{\ran(\gamma)} =[\chi_{UV}n]_{\ran(\gamma)}=[\chi_Un]_{\ran(\gamma)}$ and so $\beta_{\gamma}$ is well defined.

 Let us verify that $\mathrm{Sh}(M)$ is a $\mathscr G$-sheaf of $\mathcal O$-modules.  It is clearly a $\mathscr G$-sheaf of abelian groups with respect to the fiberwise addition $[m]_x+[n]_x=[m+n]_x$ (i.e., addition is fiberwise continuous and each $\beta_\gamma$ is an additive homomorphism).

To define the $\mathcal O_x$-module structure on $M_x$, let $r\in \mathcal O_x$.  We can choose a section $t\in \Gamma_c(\mathscr G\skel 0,\mathcal O)$ with $t(x) = r$ (using Proposition~\ref{p:lots.of.secs}).  We define $r[m]_x = [tm]_x$.  This is independent of the choice of $t$ and $m$.   Let us first consider the case of $m$.  If $[m]_x=[n]_x$, then there is a compact open neighborhood $W$ of $x$ with $\chi_Wm=\chi_Wn$.  Also $t\ast \chi_W =\chi_W\ast t$,  as both sections agree with $t$ on $W$ and are zero outside of $W$. Then $[tm]_x = [\chi_W\ast tm_X]_x = [t\ast \chi_Wm]_x=[t\ast \chi_Wn]_x= [\chi_W\ast tn]_x=[tn]_x$, yielding independence of the choice of $m$.
If $s$ is a section with $s(x)=r=t(x)$, then since the zero section has open image, the set of points where $t-s$ is zero is open and contains $x$.  Hence there is a compact open neighborhood $U$ of $x$ with $t|_U=s|_U$.  Consequently, $\chi_U\ast s=\chi_U\ast t$ and so $[tm]_x=[\chi_U\ast tm]_x = [\chi_U\ast sm]_x = [sm]_x$.

Now let $\gamma\in \mathscr G\skel 1$ and $r\in \mathcal O_{\dom(\gamma)}$. Let $U$ be a compact open bisection containing $\gamma$ and $t$ a compactly supported section with $t(\dom(\gamma))=r$. Then $\dom(\gamma)\in \dom(U)$ and so $[m]_{\dom(\gamma)} = [\chi_{U\inv}\ast \chi_Um]_{\dom(\gamma)}$ for any $m\in $M.  Thus we have that
\[\beta_{\gamma}(r[m]_{\dom(\gamma)}) = [\chi_U\ast t\ast \chi_{U\inv} (\chi_Um)]_{\ran(\gamma)}.\]  Now $\chi_U\ast t\ast \chi_{U\inv}\in \Gamma_c(\mathscr G\skel 0,\mathcal O)$ and satisfies $\chi_U\ast t\ast \chi_{U\inv}(\ran(\gamma)) = \alpha_{\gamma}(t(\dom(\gamma))) =\alpha_{\gamma}(r)$.  We thus obtain that
\[\beta_{\gamma}(r[m]_{\dom(\gamma)}) = \alpha_{\gamma}(r)[\chi_Um]_{\ran(\gamma)} = \alpha_{\gamma}(r)\beta_{\gamma}([m]_{\dom(\gamma)}),\] as required.  The remaining verifications that $\mathrm{Sh}(M)$ is a $\mathscr G$-sheaf of $\mathcal O$-modules are analogous to the special case in~\cite{groupoidbundles}.   If $f\colon M\to N$ is a homomorphism of unitary $R$-modules, then it is straightforward that $[m]_x\mapsto [f(m)]_x$ is a morphism of $\mathscr G$-sheaves of $\mathcal O$-modules.
We summarize in the following proposition.

\begin{Prop}\label{p:mod.to.sheaf}
The construction $M\longmapsto \mathrm{Sh}(M)$ is a functor from the category $\module{\Gamma_c(\mathscr G,\mathcal O)}$ to the category of $\mathscr G$-sheaves of $\mathcal O$-modules.
\end{Prop}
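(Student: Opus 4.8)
The plan is to split the verification into how $\mathrm{Sh}(-)$ acts on objects and how it acts on morphisms, since almost all of the object-level bookkeeping has already been carried out above and only a few axioms remain. For objects, I would first note that $q\colon F\to\mathscr G\skel 0$ is a local homeomorphism: each basic set $D(m,U)$ maps homeomorphically onto the compact open set $U$, and these sets cover $F$ because $[m]_x\in D(m,U)$ whenever $x\in U$. Then I would check the $\mathscr G$-sheaf axioms: (S1) holds since $[\chi_Um]_x=[m]_x$ for $x\in U\subseteq\mathscr G\skel 0$ compact open; (S2) is built into the definition of $\beta_\gamma$; and (S3) follows by choosing $U\ni\gamma$ and $V\ni\beta$ in $\mathscr G^a$ with $VU$ a compact open bisection containing $\beta\gamma$ and using Proposition~\ref{p:inverse.embed} together with associativity of the $R$-action on $M$ to get $\beta_\beta(\beta_\gamma([m]_{\dom(\gamma)}))=[\chi_V(\chi_Um)]_{\ran(\beta)}=[\chi_{VU}m]_{\ran(\beta)}=\beta_{\beta\gamma}([m]_{\dom(\gamma)})$. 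Continuity of $\beta\colon\mathscr G\skel 1\times_{\dom,q}F\to F$ is the one genuine topological point, and it goes exactly as in~\cite{groupoidbundles}: fixing $U\in\mathscr G^a$ with $\gamma\in U$, on a neighborhood of $(\gamma,[m]_{\dom(\gamma)})$ of the form $U\times_{\dom,q}D(m,V)$ one has $\beta_\delta([m]_{\dom(\delta)})=[\chi_Um]_{\ran(\delta)}$, which depends continuously on $\delta$ through the local homeomorphism $\ran|_U$. For the module structure, (SM3) was verified above, (SM1) is the fiberwise additivity already noted, (SM2) follows by the section-choosing argument of~\cite{groupoidbundles}, and each stalk $M_x\cong M/N_x$ is a unitary $\mathcal O_x$-module because taking $t=\chi_U$ with $x\in U\subseteq\mathscr G\skel 0$ compact open gives $1_x[m]_x=[\chi_Um]_x=[m]_x$.

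For morphisms, given a homomorphism $f\colon M\to N$ of unitary $R$-modules I would set $\mathrm{Sh}(f)([m]_x)=[f(m)]_x$. Well-definedness is the only place $R$-linearity enters: if $[m]_x=[n]_x$ then $\chi_Um=\chi_Un$ for some compact open $U\ni x$, so $\chi_Uf(m)=f(\chi_Um)=f(\chi_Un)=\chi_Uf(n)$ and hence $[f(m)]_x=[f(n)]_x$. The triangle over $\mathscr G\skel 0$ commutes by construction. For continuity, $\mathrm{Sh}(f)$ carries $D(m,U)$ into $D(f(m),U)$ and there coincides with the composite of the homeomorphism $q\colon D(m,U)\to U$ followed by the inverse of the homeomorphism $q\colon D(f(m),U)\to U$, hence is continuous; since the sets $D(m,U)$ cover $\mathrm{Sh}(M)$, so is $\mathrm{Sh}(f)$. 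Compatibility with $\beta$ is immediate, $\mathrm{Sh}(f)(\beta^M_\gamma([m]_{\dom(\gamma)}))=[f(\chi_Um)]_{\ran(\gamma)}=[\chi_Uf(m)]_{\ran(\gamma)}=\beta^N_\gamma(\mathrm{Sh}(f)([m]_{\dom(\gamma)}))$, and $\mathrm{Sh}(f)$ restricts to an additive map on each stalk with $\mathrm{Sh}(f)(r[m]_x)=[f(tm)]_x=[t\,f(m)]_x=r[f(m)]_x$ for a section $t$ with $t(x)=r$; thus $\mathrm{Sh}(f)$ is a morphism of $\mathscr G$-sheaves of $\mathcal O$-modules. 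Functoriality is then trivial: $\mathrm{Sh}(\mathrm{id}_M)$ fixes every $[m]_x$, and $\mathrm{Sh}(g\circ f)([m]_x)=[g(f(m))]_x=(\mathrm{Sh}(g)\circ\mathrm{Sh}(f))([m]_x)$.

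I do not expect a serious obstacle: the whole argument amounts to checking that the localization-at-$x$ construction is compatible with the ring structure of $R$ and with the topology, and the one non-formal ingredient, continuity of $\beta$ for $\mathrm{Sh}(M)$, is the same as the argument in~\cite{groupoidbundles}. The only point requiring care is that the restriction homomorphisms $\rho^V_U$ and the classes $[m]_x$ interact correctly with convolution (that $\chi_V\ast\chi_U=\chi_U$ for $U\subseteq V$, that $t\ast\chi_W=\chi_W\ast t$ when $t$ is supported appropriately, and so on), which was already arranged in the construction preceding the statement; granting that, the well-definedness of $\mathrm{Sh}(f)$ rests on the single identity $f(\chi_Um)=\chi_Uf(m)$ and everything else is formal.
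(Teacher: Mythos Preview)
Your proposal is correct and follows essentially the same approach as the paper. The paper carries out most of the object-level verification in the text immediately preceding the proposition (well-definedness of $\beta_\gamma$, the $\mathcal O_x$-module structure on stalks, and (SM3)), defers the remaining sheaf-theoretic checks to~\cite{groupoidbundles}, and dismisses the morphism part as ``straightforward''; you simply spell out these details explicitly, which is exactly what the paper intends.
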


Most of the rest of this section is concerned with proving that there are natural isomorphisms $M\cong \Gamma_c(\mathscr G,\mathrm{Sh}(M))$ and $\mathcal M\cong \mathrm{Sh}(\Gamma_c(\mathscr G,\mathcal M))$.  The proofs are again very similar to those in~\cite{groupoidbundles} and so we skip many of the tedious details.

\begin{Thm}[Disintegration theorem]\label{t:disint}
The functors $M\longmapsto \mathrm{Sh}(M)$ and $\mathcal M\longmapsto \Gamma_c(\mathscr G,\mathcal O)$ provide an equivalence between the category $\module{\Gamma_c(\mathscr G,\mathcal O)}$ of unitary $\Gamma_c(\mathscr G,\mathcal O)$-modules and the category of $\mathscr G$-sheaves of $\mathcal O$-modules.
\end{Thm}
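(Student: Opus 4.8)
The plan is to build on Propositions~\ref{p:is.unitary} and~\ref{p:mod.to.sheaf}, which already supply the two functors $\mathcal M\mapsto \Gamma_c(\mathscr G,\mathcal M)$ and $M\mapsto \mathrm{Sh}(M)$; what remains is to exhibit natural isomorphisms $\eta_M\colon M\to \Gamma_c(\mathscr G,\mathrm{Sh}(M))$ and $\varepsilon_{\mathcal M}\colon \mathrm{Sh}(\Gamma_c(\mathscr G,\mathcal M))\to \mathcal M$. As remarked after Proposition~\ref{p:mod.to.sheaf}, almost everything runs parallel to~\cite{groupoidbundles}; the only genuinely new bookkeeping concerns the non-constant, possibly non-commutative stalk rings $\mathcal O_x$, which enters only in the verification of $\mathcal O_x$-linearity and of compatibility with the maps $\beta_\gamma$.

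I would first define $\eta_M(m)$ to be the section $x\mapsto [m]_x$. Continuity is immediate since this section restricts the basic open set $D(m,\mathscr G\skel 0)$, and compact support follows from unitarity of $M$: pick a compact open $U\subseteq \mathscr G\skel 0$ with $\chi_U m=m$, so that $\eta_M(m)$ vanishes off $U$. That $\eta_M$ is an $R$-module homomorphism reduces, once the module action on $\Gamma_c(\mathscr G,\mathrm{Sh}(M))$ is unwound, to the defining formula $\beta_\gamma([m]_{\dom(\gamma)})=[\chi_U m]_{\ran(\gamma)}$. For injectivity, if $[m]_x=0$ for every $x$, choose $U$ with $\chi_U m=m$; each $x\in U$ has a compact open neighborhood $U_x\subseteq U$ with $\chi_{U_x}m=0$, finitely many such cover $U$, and the inclusion-exclusion argument used in the proof of Proposition~\ref{p:generated} then gives $\chi_U m=0$, i.e.\ $m=0$. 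The main work is surjectivity: given $s\in \Gamma_c(\mathscr G,\mathrm{Sh}(M))$ with support inside a compact open $U$, write $s(x)=[m_x]_x$ for each $x$; since $s$ and $y\mapsto [m_x]_y$ are continuous sections of the \'etale space $\mathrm{Sh}(M)$ agreeing at $x$ (and $s$ vanishes off its closed support), there is a compact open $V_x\subseteq U$ with $s(y)=[\chi_{V_x}m_x]_y$ for all $y\in V_x$, where we may take $m_x=0$ if $x\notin \supp(s)$. Finitely many $V_{x_1},\dots,V_{x_n}$ cover $U$; since $\mathscr G\skel 0$ is Hausdorff each $V_{x_i}$ is clopen in $U$, so we disjointify them into pairwise disjoint compact open pieces $W_1,\dots,W_k$ covering $U$, choose for each $j$ an index $x(j)$ with $W_j\subseteq V_{x(j)}$, and set $m=\sum_{j} \chi_{W_j}m_{x(j)}\in M$. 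Disjointness of the $W_j$ together with $\supp(m)\subseteq U$ then forces $[m]_y=s(y)$ for all $y\in \mathscr G\skel 0$, so $\eta_M(m)=s$.

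For $\varepsilon_{\mathcal M}$ I would work stalkwise: given $\xi\in \mathcal M_x$, use Proposition~\ref{p:lots.of.secs} to choose $m\in \Gamma_c(\mathscr G,\mathcal M)$ with $m(x)=\xi$, and declare $\varepsilon_{\mathcal M,x}([m]_x)=m(x)$. This is well defined because two sections of $\mathcal M$ agreeing at $x$ agree on a compact open neighborhood of $x$, hence represent the same class in $\mathrm{Sh}(\Gamma_c(\mathscr G,\mathcal M))_x$; injectivity is the converse ($[m]_x=0$ forces $m|_U=0$ for some compact open $U\ni x$, so $m(x)=0$), and surjectivity is again Proposition~\ref{p:lots.of.secs}. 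The remaining checks, that $\varepsilon_{\mathcal M,x}$ is additive and $\mathcal O_x$-linear and that it intertwines the map $\beta_\gamma$ of $\mathrm{Sh}(\Gamma_c(\mathscr G,\mathcal M))$ with that of $\mathcal M$, are where axiom (SM3) and the formula $r[m]_x=[tm]_x$ defining the $\mathcal O_x$-action are invoked. Assembling the stalk maps gives a bijective morphism $\mathrm{Sh}(\Gamma_c(\mathscr G,\mathcal M))\to \mathcal M$ of $\mathscr G$-sheaves of $\mathcal O$-modules; since a continuous bijective morphism between \'etale spaces over $\mathscr G\skel 0$ is automatically a homeomorphism, it is an isomorphism. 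Naturality of $\eta$ and $\varepsilon$ is transparent from the formulas $\eta_M(m)(x)=[m]_x$ and $\varepsilon_{\mathcal M,x}([m]_x)=m(x)$. The one step I expect to demand real care, rather than formal manipulation or direct transcription of~\cite{groupoidbundles}, is the surjectivity of $\eta_M$, i.e.\ the compact-open disjointification and patching just sketched.
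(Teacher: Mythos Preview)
Your proposal is correct and follows essentially the same route as the paper: the paper defines $\eta_M(m)(x)=[m]_x$ and $v([t]_x)=t(x)$ exactly as you do, proves injectivity of $\eta_M$ by the same compactness plus inclusion--exclusion argument, proves surjectivity by the same disjointification-and-patching you sketch, and handles $\varepsilon_{\mathcal M}$ stalkwise just as you describe. One tiny cosmetic point: the set $D(m,\mathscr G\skel 0)$ is not a basic open unless $\mathscr G\skel 0$ is compact, so for continuity of $\eta_M(m)$ you should argue locally via $D(m,U)$ for compact open $U$ (as the paper does), but this is a notational slip rather than a gap.
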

\begin{proof}
Put $R=\Gamma_c(\mathscr G,\mathcal O)$.  First we start with a unitary $R$-module $M$ and define the isomorphism $\eta\colon M\to \Gamma_c(\mathscr G,\mathrm{Sh}(M))$, which the reader can verify is natural in $M$.  If $m\in M$, then we define a section $\wh m\colon \mathscr G\skel 0\to \coprod_{x\in \mathscr G\skel 0}M_x$ by $\wh m(x) = [m]_x$.   Let us check continuity.  Let $(n,U)$ with $n\in M$ and $U$ a compact open neighborhood of $x$ be a basic neighborhood of $\wh m(x)$.  Then $[m]_x=\wh m(x) = [n]_x$ and so we can find $W$ a compact open neighborhood of $x$ with $\chi_Wm=\chi_Wn$.  Then $x\in U\cap W$ and if $y\in U\cap W$, then $\wh m(y) = [m]_y = [\chi_Wm]_y = [\chi_Wn]_y = [n]_y$ and so $\wh m(U\cap W)\subseteq (n,U)$.  Thus $\wh m$ is continuous.  Since $M$ is unitary, we can find $U\subseteq \mathscr G\skel 0$ compact open with $\chi_Um=m$.  Suppose that $y\notin U$.  Then since $\mathscr G\skel 0$ is Hausdorff and $U$ is compact, we can find a neighborhood $V$ of $y$ disjoint from $U$.  Since $\mathscr G\skel 0$ has a basis of compact open sets, we may assume that $V$ is compact open.  Then $[m]_y=[\chi_Vm]_y = [\chi_V\ast \chi_Um]_y = [\chi_{U\cap V}m]_y = [0]_y$ as $\chi_\emptyset$ is the zero of $R$.   Thus $\wh m$ has support contained in the compact open set $U$ and hence $\wh m$ has compact support (as the support of a section is always closed).

It is straightforward to verify that $m\mapsto \wh m$ is an abelian group homomorphism $\eta\colon M\to \Gamma_c(\mathscr G, \mathrm{Sh}(M))$.  To show that it is an $R$-module homomorphism, it is enough to verify  $\eta(fm)=f\eta(m)$ (i.e., $\wh{fm}=f\wh m$) on additive generators $f=g\chi_U$ of $R$, where $U$ is a compact open bisection and $g$ is a section of $\mathcal O$ supported on $\ran(U)$.   Since $\chi_{\ran(U)}f=f$, it follows that $f\wh m$ is supported on $\ran(U)$.  Also, it follows  from the proof in the  previous paragraph that the support of $\wh{fm}$ is contained in $\ran(U)$ as $\chi_{\ran(U)}fm=fm$.  Let $x\in \ran(U)$ and let $\gamma\in U$ with $\ran(\gamma) = x$.  Recall that $f=g\chi_U=g\ast \chi_U$ (cf., the proof of Proposition~\ref{p:is.covariant}).  Then $\wh{fm}(x) = [fm]_x= [g\ast \chi_Um]_x =g(x)[\chi_Um]_x= g(x)\beta_{\gamma}([m]_{\dom(\gamma)})$.  On the other hand
\[f\wh m(x) = g(x)\beta_{\gamma}(\wh m(\dom(\gamma))) = g(x)\beta_{\gamma}([m]_{\dom(\gamma)})=\wh{fm}(x), \] as required.

To see that $\eta$ is injective, suppose that $\eta(m)=\wh m$ is the zero section.  We show that $m=0$.  Let $U\subseteq \mathscr G\skel 0$ compact open be arbitrary.  We show that $\chi_Um=0$.  Since $M$ is unitary, it will then follow that $m=0$.  For each $x\in U$, we have that $[0]_x=\wh m(x) = [m]_x$ and so we can find a compact open neighborhood $V_x$ of $x$ in $\mathscr G\skel 0$ with $\chi_{V_x}m=0$.    Since $U$ is compact, we can cover it with a finite set $V_1,\ldots, V_n$ of compact open sets with $\chi_{V_i}m=0$ for $i=1,\ldots, n$.  Then $V=V_1\cup\cdots \cup V_n$ is compact open and $\chi_V$ is an integral linear combination of $\chi_{V_1},\ldots,\chi_{V_n}$ by the principle of inclusion-exclusion.  Thus $\chi_Vm=0$.  Therefore, $\chi_Um = \chi_{U\cap V}m = \chi_U\ast\chi_Vm = 0$.  We conclude that $m=0$ and so $\eta$ is injective.

Finally, we must prove that $\eta$ is surjective.  Let $t\in \Gamma_c(\mathscr G,\mathrm{Sh}(M))$.  For each $x\in \mathscr G\skel 0$, let  $(m_x,U_x)$ be a basic neighborhood of $t(x)$.  Then, by continuity, we can find a compact open neighborhood $W_x$ of $x$ with $t(W_x)\subseteq (m_x,U_x)$.  It then follows that $t(y) = [m_x]_y$ for all $y\in W_x$.  Using that the support of $t$ is compact, we can obtain a finite collection of compact open neighborhoods $W_1,\ldots, W_n$ and $m_1,\ldots, m_n\in M$ such that $\supp(t)\subseteq W_1\cup \cdots \cup W_n$ and $t(y) = [m_i]_y$ for all $y\in W_i$.  Put $V_1=W_1$ and, more generally, $V_i = W_i\setminus (V_1\cup\cdots\cup V_{i-1})$.  Then $V_i\subseteq W_i$, for all $i$, the $V_i$ are pairwise disjoint and $V_1\cup\cdots \cup V_n = W_1\cup\cdots \cup W_n$.  Consequently, $t(y) = [m_i]_y$ for all $y\in V_i$.  Put \[m = \chi_{V_1}m_1+\cdots+\chi_{V_n}m_n.\]  We claim that $t=\wh m=\eta(m)$.  Let $V=V_1\cup\cdots\cup V_n$.  Note that $\supp(t)\subseteq V$ and $\chi_Vm = m$.  Therefore, $\supp(\wh m)\subseteq V$ by the first paragraph of the proof.  Thus it remains to show that $t(x)=\wh m(x)$ for all $x\in V$.  There is a unique index $i$ with $x\in V_i$ and $t(x) = [m_i]_x$.  On the other hand
\begin{align*}
\wh m(x) &= [m]_x = [\chi_{V_i}m]_x =[\chi_{V_i}(\chi_{V_1}m_1+\cdots +\chi_{V_n}m_n)]_x = [\chi_{V_i}m_i]_x = [m_i]_x\\ &=t(x)
\end{align*}
 as $\chi_{V_i}\ast \chi_{V_j}=0$ whenever $i\neq j$.  Thus $\eta$ is surjective.  This completes the proof that $M\cong \Gamma_c(\mathscr G,\mathrm{Sh}(M))$.

Now we prove that if $\mathcal M=(F,q,\beta)$ is a $\mathscr G$-sheaf of $\mathcal O$-modules, then $\mathcal M\cong \mathrm{Sh}(\Gamma_c(\mathscr G,\mathcal M))$.  We leave the reader to check naturality.  We define $v\colon \mathrm{Sh}(\Gamma_c(\mathscr G,\mathcal M))\to \mathcal M$ by $v([t]_x) = t(x)$ for $t\in \Gamma_c(\mathscr G,\mathcal M)$.  This is well defined because if $[s]_x=[t]_x$, then there exists a compact open neighborhood $U$ of $x$ in $\mathscr G\skel 0$ with $\chi_Us =\chi_Ut$ and so $s(x)=(\chi_Us)(x)=(\chi_Ut)(x)=t(x)$.  It is a straightforward adaptation of the argument in~\cite{groupoidbundles} to check that $v$ is continuous and a morphism of $\mathscr G$-sheaves.  For example, to check continuity of $v$, let $U$ be a neighborhood of $v([t]_x) = t(x)$.  Then we can find a compact open neighborhood $W$ of $x$ with $t(W)\subseteq U$.  Then $(t,W)$ is a neighborhood of $[t]_x$ and if $[t]_y\in (t,W)$, then $y\in W$ and so $v([t]_y) = t(y)\in U$.  Thus $v((t,W))\subseteq U$, yielding continuity.  To see that it is a $\mathscr G$-sheaf morphism let $\gamma\colon x\to y$ be an arrow of $\mathscr G$ and let $U\in \mathscr G^a$ with $\gamma\in U$.  Write $\beta'$ for the action of $\mathscr G$ on $\mathrm{Sh}(\Gamma_c(\mathscr G,\mathcal M))$.  Then if $t\in \Gamma_c(\mathscr G,\mathcal M)$, we have $\beta'_{\gamma}([t]_x) = [\chi_Ut]_y$.  Therefore, $v(\beta'_{\gamma}([t]_x)) = v([\chi_Ut]_y) = (\chi_Ut)(y) = \chi_U(\gamma)\beta_{\gamma}(t(x)) = \beta_{\gamma}(v([t]_X))$, as required.

We check that $v$ is an isomorphism of $\mathscr G$-sheaves of $\mathcal O$-modules.  First, let $r\in \mathcal O_x$ and choose a section $f\in \Gamma_c(\mathscr G\skel 0,\mathcal O)$ with $f(x)=r$.  Then $v(r[t]_x) = v([ft]_x) = (ft)(x) = f(x)t(x) = rv([t]_x)$ and so $v$ is a morphism.  Since morphisms of $\mathscr G$-sheaves are automatically local homeomorphisms, it remains to show that $v$ is bijective.  Let $m\in \mathcal M_x$.  Let $t\in \Gamma_c(\mathscr G,\mathcal M)$ be a section with $t(x) = m$ (such exists by Proposition~\ref{p:lots.of.secs}).  Then $v([t]_x) = t(x)=m$ and so $v$ is onto.   Suppose that $v([t]_x) = 0_x$, that is, $t(x)=0_x$.  Then since the image of the zero section is open, we can find a compact open neighborhood $U$ of $x$ with $t(U)=0$.  Then $[t]_x = [\chi_U t]_x = [0]_x$ as $\chi_U t$ is zero on $\mathscr G\skel 0$.   This completes the proof that $v$ is an isomorphism.
\end{proof}

Note that in applications we shall mostly use the isomorphism $M\cong \Gamma_c(\mathscr G,\mathrm{Sh}(M))$ since we want to disintegrate representations of $R$ to representations of $\mathscr G$.

\section{Groupoid algebras as skew inverse semigroup rings}\label{gpdasisr}

Our next goal is to show that unitary $A\rtimes S$-modules also can be represented as modules of global sections of $\mathscr G$-sheaves of $\mathcal O$-modules, where $A=\Gamma_c(\mathscr G\skel 0,\mathcal O)$ and $S\leq \mathscr G^a$ is an inverse subsemigroup satisfying  the germ conditions.
This will allow us to prove that the surjective homomorphism in Proposition~\ref{p:is.covariant} is, in fact, an isomorphism in this case.  Here, we form the skew inverse semigroup ring via the action in Proposition~\ref{p:action.ok2}.  So from now on assume that $S\leq \mathscr G^a$ is an inverse semigroup satisfying the germ conditions.

By Theorem~\ref{t:adjunction}, a unitary $A\rtimes S$-module $M$ is the same thing as an abelian group $M$ and a covariant system $(\mathrm{End}_{\mathbb Z}(M),\theta,\p)$ with the extra condition that $M$ is a unitary $A$-module under the module action $am=\theta_a(m)$, where we put $\theta(a)=\theta_a$ and $\p(s)=\p_s$.  This latter condition is equivalent to asking that, for each $m\in M$, there is a compact open $U\subseteq \mathscr G\skel 0$ with $\chi_Um=m$.

Let us recall that if $\psi\colon R\to S$ is a surjective ring homomorphism, then each $S$-module $N$ becomes an $R$-module, called the \emph{inflation} of $N$ along $\psi$, by putting $rn=\psi(r)n$ for $r\in R$ and $n\in N$; inflation embeds the category of $S$-modules as a full subcategory of the category of $R$-modules.

To each unitary $A\rtimes S$-module $M$, we will associate a $\mathscr G$-sheaf $\mathcal M=(F,q,\beta)$ of $\mathcal O$-modules such that $M\cong \Gamma_c(\mathscr G,\mathcal M)$ as $A\rtimes S$-modules, where we view $\Gamma_c(\mathscr G,\mathcal M)$ as an $A\rtimes S$-module via inflation along the surjective homomorphism \[\wh \theta\rtimes \wh \p\colon A\rtimes S\to \Gamma_c(\mathscr G,\mathcal O)\] from Proposition~\ref{p:is.covariant}.  In other words, we shall show that every unitary $A\rtimes S$-module action factors through $\wh\theta\rtimes \wh \p$.  Applying this to the regular module, which is unitary and faithful, will allow us to prove that $\wh \theta\rtimes \wh p$ is an isomorphism.

Since $M$ is an $A$-module, applying Theorem~\ref{t:disint} to $\mathscr G\skel 0$, we obtain a $\mathscr G\skel 0$-sheaf of $\mathcal O$-modules $\mathscr M$ with underlying space $F=\coprod_{x\in \mathscr G\skel 0} M_x$ and $M\cong \Gamma_c(\mathscr G\skel 0,\mathcal M)$ as an $A$-module, as in the construction of $\mathrm{Sh}(M)$ in the previous section.  Note that the action of $r\in \mathcal O_x$ on $[m]_x$ is given by $[\theta_a(m)]_x$, where $a\in A$ is a section with $a(x)=r$.

 We now add a $\mathscr G$-sheaf structure as follows.  Let $\gamma\in \mathscr G\skel 1$ and choose $s\in S$ with $\gamma\in s$; such exists by (G1).  Put \[\beta_{\gamma}([m]_{\dom(\gamma)})=[\p_s(m)]_{\ran(\gamma)}\] to define $\beta$.  Let us check that $\beta_{\gamma}$ is well defined.  First fix $m$ and suppose that $\gamma\in t$ with $t\in S$. By (G2), there exists $u\in S$ with $\gamma\in u$ and $u\leq s,t$.  Put $U=\ran(u)=uu^*$ and note that $Us=u=Ut$ and $\ran(\gamma)\in U$.    Then, using that $(\mathrm{End}_{\mathbb Z}(M),\theta,\p)$ is a covariant system, we have that
\[[\p_s(m)]_{\ran(\gamma)} = [\theta_{\chi_U}\p_s(m)]_{\ran(\gamma)} = [\p_{uu^*}\p_s(m)]_{\ran(\gamma)}= [\p_u(m)]_{\ran(\gamma)}.\]
A similar argument shows that $[\p_t(m)]_{\ran(\gamma)} = [\p_u(m)]_{\ran(\gamma)}$ and so $\beta_{\gamma}$ does not depend on the choice of $s$.
On the other hand, if $[m]_{\dom(\gamma)} = [n]_{\dom(\gamma)}$ and $\gamma\in s$, then we can find a compact open neighborhood $W$ of $\dom(\gamma)$ with $\theta_{\chi_W}(m)=\theta_{\chi_W}(n)$.  Moreover, we may shrink $W$ so that $W\subseteq \dom(s)$.  Then $\chi_W\in D_{s^*}$ and so, by covariance, we have that $\p_s \theta_{\chi_W}\p_{s^*} = \theta_{\til\alpha_s(\chi_W)} = \theta_{\chi_{\ran(sW)}}$ where the last equality follows because if $\tau\in s$, then \[\til\alpha_s(\chi_W)(\ran(\tau))=\alpha_{\tau}(\chi_W(\dom(\tau))) =\begin{cases} 1_{\ran(\tau)}, & \text{if}\ \dom(\tau)\in W\\ 0, & \text{else.}\end{cases} \]  Therefore, since $\ran(\gamma)\in \ran(sW)$, we have that
\begin{align*}
[\p_s(m)]_{\ran(\gamma)} & = [\theta_{\chi_{\ran(sW)}}\p_s(m)]_{\ran(\gamma)}= [\p_s\theta_{\chi_W}\p_s^*\p_s(m)]_{\ran(\gamma)} \\ & = [\p_s\theta_{\chi_W}\theta_{\chi_{\dom(s)}}(m)]_{\ran(\gamma)} = [\p_s\theta_{\chi_W}(m)]_{\ran(\gamma)}.
\end{align*}
  Similarly, we have $[\p_s(n)]_{\ran(\gamma)} = [\p_s\theta_{\chi_W}(n)]_{\ran(\gamma)}$. As   $\theta_{\chi_W}(m)=\theta_{\chi_W}(n)$, it follows that $[\p_s(m)]_{\ran(\gamma)} = [\p_s(n)]_{\ran(\gamma)}$, as required.

It is straightforward to verify that $\beta$ makes $\mathcal M$ into a $\mathscr G$-sheaf.  To check continuity of $\beta$, let $\gamma\colon x\to y$, $m\in M$ and let $s\in S$ with $\gamma\in s$.  Then $\beta_{\gamma}([m]_x) = [\p_s(m)]_y$ and so a basic neighborhood of $\beta_{\gamma}([m]_x)$ is of the form $(\p_s(m),W)$ with $y\in W$ compact open and $W\subseteq \ran(s)$.  Consider the neighborhood $Z=(Ws\times (m,\dom(Ws)))\cap (\mathscr G\skel 1\times_{\dom,q}F)$.  Firstly, $(\gamma,[m]_x)\in Z$.  A typical element of $Z$ is of the form $(\gamma',[m]_z)$ with $\gamma'\in Ws$ and $\dom(\gamma')=z$.  Then $\gamma'\in s$ and so $\beta_{\gamma'}([m]_z) = [\p_s(m)]_{\ran(\gamma')}$ and $\ran(\gamma')\in \ran(Ws) = W$.  Thus $\beta(Z)\subseteq (\p_s(m),W)$, yielding continuity of $\beta$.  If $x\in \mathscr G\skel 0$, then we can choose $s\in S$ with $x\in s$.  Then $x\in s^*s$ and so without loss of generality we may assume that $s=U$ with $U\subseteq \mathscr G\skel 0$ compact open.  Then, by convariance, we have that $[m]_x=[\theta_{\chi_U}m]_x=[\p_{\chi_U}m]_x = \beta_x([m]_x)$, demonstrating (S1).  Suppose that $\dom(\sigma)=\ran(\tau)$ and $\sigma\in s$ and $\tau\in t$ with $s,t\in S$.  Then $\sigma\tau\in st\in S$ and so $\beta_{\sigma\tau}([m]_{\dom(\tau)}) = [\p_{st}(m)]_{\ran(\sigma)} = [\p_s(\p_t(m))]_{\ran(\sigma)} = \beta_{\sigma}([\p_t(m)]_{\dom(\sigma)}) = \beta_{\sigma}(\beta_{\tau}([m]_{\dom(\tau)}))$, establishing (S3) and so $\mathcal M$ is a $\mathscr G$-sheaf.

  Let us check that $\beta$ makes $\mathcal M$ into a $\mathscr G$-sheaf of $\mathcal O$-modules.  We need to verify that if $r\in \mathcal O_{\dom(\gamma)}$, then $\beta_{\gamma}(r[m]_{\dom(\gamma)}) = \alpha_{\gamma}(r)\beta_{\gamma}([m]_{\dom(\gamma)})$.  Choose $s\in S$ with $\gamma\in S$.  As $\dom(\gamma)\in \dom(s)$, we can find a section $t$ supported on $\dom(s)$ with $t(\dom(\gamma)) = r$.  Then $t\in D_{s^*}$ and $\til\alpha_s(t)(\ran(\gamma)) = \alpha_{\gamma}(t(\dom(\gamma))) = \alpha_{\gamma}(r)$.  Thus $\alpha_{\gamma}(r)\beta_{\gamma}([m]_{\dom(\gamma)}) = [\theta_{\til\alpha_s(t)}\p_s(m)]_{\ran(\gamma)}$.  On the other hand, using covariance,
$\beta_{\gamma}(r[m]_{\dom(\gamma)}) =\beta_{\gamma}(r[\theta_{\chi_{\dom(s)}}(m)]_{\dom(\gamma)}) = [\p_s\theta_t\p_{s^*}\p_s(m)]_{\ran(\gamma)} = [\theta_{\til\alpha_s(t)} \p_s(m)]_{\ran(\gamma)}$, as required.

It follows that $\Gamma_c(\mathscr G,\mathcal M)$ is a unitary $\Gamma_c(\mathscr G,\mathcal O)$-module.  It is therefore an $A\rtimes S$-module via $\wh \theta\rtimes \wh \p$.  Our goal is to show that $M\cong \Gamma_c(\mathscr G,\mathcal M)$ as an $A\rtimes S$-module via $\psi\colon M\to \Gamma_c(\mathscr G,\mathcal M)$ where $\psi(m) = \wh m$ is given by $\wh m(x) = [m]_x$.  It follows from Theorem~\ref{t:disint} applied to $\mathscr G\skel 0$ that $\psi$ is an isomorphism of $A$-modules.  If $a\in D_s$, then $(\wh \theta\rtimes \wh \p)(a\delta_s+\mathcal N) = a\chi_s$ and hence, since $\psi$ is an $A$-module isomorphism and $a\delta_s+\mathcal N = (a\delta_{ss^*}+\mathcal N)(\chi_{ss^*}\delta_s+\mathcal N)$, it remains to show that $\psi((\chi_{ss^*}\delta_sm+\mathcal N)m) = \chi_s\psi(m)$ or, equivalently, $\psi(\p_s(m)) = \chi_s\psi(m)$.    First note that $\p_s(m) = \p_{ss^*}\p_s(m) = \theta_{\chi_{\ran(s)}}\p_s(m)$ and hence $\psi(\p_s(m)) = \wh{\p_s(m)}$ is supported on $\ran(s)$ by the proof of Theorem~\ref{t:disint}.  Since $\chi_{\ran(s)}\ast \chi_s = \chi_s$, clearly $\chi_s\wh m=\chi_s\psi(m)$ is also supported on $\ran(s)$.  Let $x\in \ran(s)$ and let $\gamma$ be the unique arrow of $s$ with $\ran(\gamma)=x$.  Then
\[(\chi_s\wh m)(x) =\chi_s(\gamma)\beta_{\gamma}(\wh m(\dom(\gamma)))= \beta_{\gamma}([m]_{\dom(\gamma)}) = [\p_s(m)]_{\ran(\gamma)} = \wh {\p_s(m)}(\ran(\gamma))\]
 and so $\chi_s\wh m = \wh {\p_s(m)}$, as required.  We have now essentially proved the following theorem.

\begin{Thm}\label{t:factor.through}
Let $\mathcal O$ be a $\mathscr G$-sheaf of rings on an ample groupoid $\mathscr G$ and let $S\leq \mathscr G^a$ be an inverse subsemigroup satisfying the germ conditions.  Then each unitary $\Gamma_c(\mathscr G\skel 0,\mathcal O)\rtimes S$-module is an inflation of a unitary $\Gamma_c(\mathscr G,\mathcal O)$-module along the canonical surjection $\wh \theta\rtimes \wh \p\colon \Gamma_c(\mathscr G\skel 0,\mathcal O)\rtimes S\to \Gamma_c(\mathscr G,\mathcal O)$.  Consequently, $\wh \theta\rtimes \wh \p$ is an isomorphism.  In particular, \[\Gamma_c(\mathscr G,\mathcal O)\cong \Gamma_c(\mathscr G\skel 0,\mathcal O)\rtimes \mathscr G^a\] holds.
\end{Thm}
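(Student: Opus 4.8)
The plan is to piggyback on the construction carried out in the paragraphs immediately preceding the statement, where essentially all the real work has already been done. First I would observe that that construction already proves the first assertion: fixing $A=\Gamma_c(\mathscr G\skel 0,\mathcal O)$ and writing $\pi=\wh\theta\rtimes\wh\p\colon A\rtimes S\to\Gamma_c(\mathscr G,\mathcal O)$ for the surjection of Proposition~\ref{p:is.covariant}, the discussion above associates to every unitary $A\rtimes S$-module $M$ a $\mathscr G$-sheaf $\mathcal M$ of $\mathcal O$-modules together with an $A\rtimes S$-module isomorphism $\psi\colon M\to\Gamma_c(\mathscr G,\mathcal M)$, where the target carries its natural $\Gamma_c(\mathscr G,\mathcal O)$-module structure (Proposition~\ref{p:is.unitary}, Theorem~\ref{t:disint}) and is regarded as an $A\rtimes S$-module by inflation along $\pi$. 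Equivalently, the action of $A\rtimes S$ on any unitary module factors through $\pi$, hence annihilates $\ker\pi$.

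Next I would deduce that $\pi$ is injective by the standard \textit{faithful regular module} argument. The action of $S$ on $A$ is spectral (Proposition~\ref{p:action.ok2}), so by Proposition~\ref{p:embed} the ring $A\rtimes S$ has local units; in particular the left regular module ${}_{A\rtimes S}(A\rtimes S)$ is unitary. By the previous paragraph its module structure factors through $\pi$, so every $x\in\ker\pi$ satisfies $xm=0$ for all $m\in A\rtimes S$; choosing a local unit $e$ with $xe=x$ forces $x=0$. Thus $\ker\pi=0$, and combined with the surjectivity of Proposition~\ref{p:is.covariant} this gives $A\rtimes S\cong\Gamma_c(\mathscr G,\mathcal O)$.

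Finally, for the displayed special case I would note that $\mathscr G^a$ satisfies the germ conditions: (G1) holds since the compact open bisections cover $\mathscr G\skel 1$, and (G2) holds because $\mathscr G^a$ is a basis for the topology on $\mathscr G\skel 1$. Applying the isomorphism just established with $S=\mathscr G^a$ yields $\Gamma_c(\mathscr G,\mathcal O)\cong\Gamma_c(\mathscr G\skel 0,\mathcal O)\rtimes\mathscr G^a$.

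I expect that essentially all the difficulty lives upstream --- in the disintegration theorem (Theorem~\ref{t:disint}) and in the verification, performed just before the statement, that the sheaf action $\beta$ on $\mathcal M$ is well defined independently of the choice of $s\in S$ with $\gamma\in s$, which is exactly where (G2) is used. Given that machinery, the theorem is a short formal consequence; the only point needing care is the bookkeeping with local units --- checking that the regular module is genuinely unitary and that inflating a unitary module along a surjection of rings with local units keeps it unitary --- so that the hypotheses of the construction really do apply to ${}_{A\rtimes S}(A\rtimes S)$.
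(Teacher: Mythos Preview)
Your proposal is correct and follows essentially the same approach as the paper: the preceding construction shows every unitary $A\rtimes S$-module is an inflation along $\pi$, and then one applies this to the left regular module, which is unitary and faithful because $A\rtimes S$ has local units, to conclude $\ker\pi=0$. The paper's proof is a terser rendition of exactly this argument, and the displayed special case with $S=\mathscr G^a$ is immediate since $\mathscr G^a$ satisfies the germ conditions, just as you observe.
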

\begin{proof}
Everything except that $\wh \theta\rtimes \wh \p$ is an isomorphism has been proved.  Put $R=\Gamma_c(\mathscr G\skel 0,\mathcal O)\rtimes S$ and $\pi = \wh\theta\rtimes \wh \p$.  Then $R$ is a unitary left $R$-module via the regular action (by left multiplication).  Moreover, this is a faithful module since $R$ has local units and hence if $0\neq r\in R$, then there is an idempotent $e\in R$ with $re=r\neq 0$.  As the module action of $R$ on $R$ factors through $\pi$, we have that $\pi(r)=0$ implies $rR=0$ and hence $r=0$.  Thus $\pi$ is injective and hence an isomorphism.
\end{proof}

Applying this to the case of a constant sheaf, we obtain the following extension of results in~\cite{BG, Demeneghi}, generalized to an arbitrary base ring (not necessarily commutative, let alone a field).

\begin{Cor}
Let $\mathscr G$ be an ample groupoid and $R$ a unital ring.  Then $A_R(\mathscr G)\cong C_c(\mathscr G\skel 0, R)\rtimes \mathscr G^a$, where $A_R(\mathscr G)$ is the Steinberg algebra of $\mathscr G$ with coefficients in $R$ (i.e., $\Gamma_c(\mathscr G,\Delta(R))$) and $C_c(\mathscr G\skel 0, R)$ is the ring of compactly supported, locally constant functions $f\colon \mathscr G\skel 0\to R$ with pointwise operations.
\end{Cor}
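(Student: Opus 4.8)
The plan is simply to specialize Theorem~\ref{t:factor.through} to the constant sheaf. Take $\mathcal O=\Delta(R)$, the $\mathscr G$-sheaf of unital rings of Example~\ref{ex:constant.sheaf}, and $S=\mathscr G^a$, which satisfies the germ conditions since it is a basis for the topology on $\mathscr G\skel 1$. By definition $A_R(\mathscr G)=\Gamma_c(\mathscr G,\Delta(R))$; moreover, as observed after Example~\ref{ex:constant.sheaf}, under the identification $A(\mathscr G,\Delta(R))\cong R^{\mathscr G\skel 1}$ the convolution \eqref{eq:define.conv} reduces to $f\ast g(\gamma)=\sum_{\beta\rho=\gamma}f(\beta)g(\rho)$, because each $\alpha_\beta$ acts as the identity on the $R$-coordinate; thus $\Gamma_c(\mathscr G,\Delta(R))$ is indeed the classical Steinberg algebra of $\mathscr G$ over $R$ (now without assuming $R$ commutative).

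Next I would record the ring identification $\Gamma_c(\mathscr G\skel 0,\Delta(R))\cong C_c(\mathscr G\skel 0,R)$, with pointwise operations on both sides: a continuous section of $\Delta(R)=(R\times\mathscr G\skel 0,p,\alpha)$ over an open $U\subseteq\mathscr G\skel 0$ is exactly a map $x\mapsto(f(x),x)$ with $f\colon U\to R$ locally constant, compact support of the section corresponds to compact support of $f$, and the fiberwise ring operations of $\Delta(R)$ are pointwise in $R$. Under this identification, the spectral action $\til\alpha$ of $\mathscr G^a$ on $A=\Gamma_c(\mathscr G\skel 0,\Delta(R))$ built in Propositions~\ref{p:action.ok} and~\ref{p:action.ok2} becomes the standard action on $C_c(\mathscr G\skel 0,R)$: $D_U$ is the ideal of functions supported on $\ran(U)$, and for $\gamma\in U$ one has $\til\alpha_U(f)(\ran(\gamma))=\alpha_\gamma(f(\dom(\gamma)))=f(\dom(\gamma))$, i.e.\ $\til\alpha_U(f)=f\circ\rho_{U\inv}$ on $\ran(U)$ and $0$ elsewhere, where $\rho$ is the Boolean action $U\mapsto \ran\circ(\dom|_U)\inv$ of $\mathscr G^a$ on $\mathscr G\skel 0$. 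This is precisely the action for which $C_c(\mathscr G\skel 0,R)\rtimes\mathscr G^a$ was considered in~\cite{Demeneghi, BC}.

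Finally, Theorem~\ref{t:factor.through} applied with $\mathcal O=\Delta(R)$ and $S=\mathscr G^a$ gives $\Gamma_c(\mathscr G,\Delta(R))\cong\Gamma_c(\mathscr G\skel 0,\Delta(R))\rtimes\mathscr G^a$, and substituting the two identifications above yields $A_R(\mathscr G)\cong C_c(\mathscr G\skel 0,R)\rtimes\mathscr G^a$. I do not expect any genuine obstacle here: the entire content is the unwinding of the constant-sheaf definitions, and the only point deserving a moment's care is verifying that the sheaf-theoretic convolution \eqref{eq:define.conv} and the action $\til\alpha$ specialize, on the nose, to the classical Steinberg convolution and to the classical $\mathscr G^a$-action on $C_c(\mathscr G\skel 0,R)$.
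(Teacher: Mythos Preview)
Your proposal is correct and follows exactly the approach the paper intends: the corollary is stated immediately after Theorem~\ref{t:factor.through} as the specialization to the constant sheaf $\Delta(R)$ and $S=\mathscr G^a$, with no further proof given. Your unwinding of the identifications $\Gamma_c(\mathscr G,\Delta(R))=A_R(\mathscr G)$, $\Gamma_c(\mathscr G\skel 0,\Delta(R))\cong C_c(\mathscr G\skel 0,R)$, and of the action $\til\alpha$ is accurate and simply fills in the details the paper leaves implicit.
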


\section{Sheaf representations of rings with local units}\label{Piercerep}
In this section, we generalize the Pierce representation of a ring~\cite{Pierce} as global sections of a sheaf of rings over a Stone space in two ways: we consider rings with local units and we allow smaller generalized Boolean algebras.  First we recall the generalized Stone space of a generalized Boolean algebra $B$.  A \emph{character} of $B$ is a non-zero homomorphism $\lambda\colon B\to \{0,1\}$ to the two-element Boolean algebra.  So a character $\lambda$ satisfies:
\begin{itemize}
\item $\lambda(0)=0$;
\item $\lambda(B)\neq 0$;
\item $\lambda(a\vee b) = \lambda(a)\vee \lambda(v)$;
\item $\lambda(ab) =\lambda(a)\lambda(b)$;
\item $\lambda(a\setminus b)=\lambda(a)\setminus \lambda(b)$
\end{itemize}
where we denoted the meet in a semilattice by product.

The (generalized) \emph{Stone space} of $B$ is the space $\wh {B}$ of characters of $B$ topologized by taking as a basis the sets
\[D(a)=\{\lambda\in \wh{B}\mid \lambda(a)=1\}\] with $a\in B$.  The sets $D(a)$ constitute the compact open subsets of $\wh{B}$ and $a\mapsto D(a)$ is an isomorphism of generalized Boolean algebras (this follows easily from Lemma~\ref{l:boolean.facts} below).  The space $\wh{B}$ is compact if and only if $B$ has a maximum, i.e., is a Boolean algebra.

Recall that a \emph{filter} in a poset is a proper non-empty subset which is upward closed and downward directed.  For a generalized Boolean algebra, $\mathcal F\neq \emptyset$ will be a filter if $0\notin \mathcal F$, it is closed under meets and it is upward closed.
Note that if $\lambda$ is a character, then $\lambda\inv(1)$ is an ultrafilter (i.e., maximal proper filter) on $B$ and the characteristic function of an ultrafilter is a character.  Since this is not so familiar for non-unital Boolean algebras, we include a proof.

The following lemma is standard.

\begin{Lemma}\label{l:boolean.facts}
Let $B$ be a generalized Boolean algebra.
\begin{enumerate}
\item Let $\mathcal F$ be a filter on $B$ and $a\notin \mathcal F$ such that $ab\neq 0$ for all $b\in \mathcal F$.  Then there is an ultrafilter containing $\mathcal F$ and $a$.
\item A filter $\mathcal F$ is an ultrafilter if, and only if, for all $a\in B\setminus \mathcal F$, there exists $b\in \mathcal F$ with $ab=0$.
\item If $a,b\in B$ and $a\nleq b$, then there is an ultrafilter $\mathcal F$ with $a\in \mathcal F$ and $b\notin \mathcal F$.
\end{enumerate}
\end{Lemma}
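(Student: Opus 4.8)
The plan is to prove the three statements in the order given, using a Zorn's lemma argument for (1) and then deriving (2) and (3) as consequences. The key structural fact I would keep in mind throughout is that in a generalized Boolean algebra, relative complements exist: for $a,b\in B$ with $b\leq a$ there is a unique $c=a\setminus b$ with $bc=0$ and $b\vee c=a$; more generally $a\setminus b$ makes sense for arbitrary $a,b$ and satisfies $(a\setminus b)\vee(a\wedge b)=a$ with the two joinands meeting in $0$. I would also record the elementary distributive identities (e.g. $c\wedge(a\vee b)=(c\wedge a)\vee(c\wedge b)$) that make the filter-theoretic manipulations go through.

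For part (1), I would consider the family $\mathcal G$ of all filters containing $\mathcal F\cup\{a\}$, or rather first note that $\mathcal F'=\{x\in B\mid x\geq ab\text{ for some }b\in\mathcal F\}$ (the upward closure of $\{ab\mid b\in\mathcal F\}$) is a filter: it is closed under meets since $(ab)(ab')=a(bb')$ and $bb'\in\mathcal F$, it is upward closed by construction, and it omits $0$ precisely by the hypothesis that $ab\neq 0$ for all $b\in\mathcal F$. Clearly $\mathcal F\subseteq\mathcal F'$ (take $b\leq x$, then $ab\leq x$... actually one checks $x\in\mathcal F$ implies $x\geq xb$ for $b=x$, hmm — better: $\mathcal F\subseteq\mathcal F'$ because for $x\in\mathcal F$ we have $x\geq ax$ and $ax$ is of the required form; wait, we need $x\geq ab$ with $b\in\mathcal F$, and $ax\leq x$ with $x\in\mathcal F$, so yes) and $a\in\mathcal F'$. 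Then I would apply Zorn's lemma to the poset of filters containing $\mathcal F'$, ordered by inclusion: the union of a chain of filters is a filter (upward closure and downward directedness are preserved, and $0$ stays out), so there is a maximal such filter $\mathcal U$, which is then an ultrafilter of $B$ containing $\mathcal F$ and $a$.

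For part (2), the ``if'' direction: suppose $\mathcal F$ has the stated property and $\mathcal F\subsetneq\mathcal F'$ for some filter $\mathcal F'$; pick $a\in\mathcal F'\setminus\mathcal F$, get $b\in\mathcal F\subseteq\mathcal F'$ with $ab=0$, so $0=ab\in\mathcal F'$, a contradiction; hence $\mathcal F$ is maximal. For the ``only if'' direction: if $\mathcal F$ is an ultrafilter and $a\notin\mathcal F$, then by part (1) (in contrapositive form) there must exist $b\in\mathcal F$ with $ab=0$, for otherwise part (1) would produce a strictly larger ultrafilter containing $a$, contradicting maximality. Part (3) is then immediate: if $a\nleq b$, then $c:=a\setminus b=a\setminus(a\wedge b)\neq 0$ since $a\nleq b$ means $a\wedge b\neq a$; start from the principal filter $\mathcal F_0=\{x\mid x\geq c\}$, note $b\notin\mathcal F_0$ because $b\geq c$ would force $b\geq c\vee(a\wedge b)=a$, and for every $x\in\mathcal F_0$ we have $cx\geq cc=c\neq 0$ so... actually I want $a\in\mathcal F_0$-compatible: since $c\leq a$ we have $a\in\mathcal F_0$ automatically; now apply part (1) with the filter $\mathcal F_0$ and the element $a$ (already in $\mathcal F_0$), or more directly just extend $\mathcal F_0$ to an ultrafilter $\mathcal F$ via Zorn as in (1); then $a\geq c$ gives $a\in\mathcal F$, and $b\notin\mathcal F$ because $b\wedge c=0$ with $c\in\mathcal F$ forces $b\notin\mathcal F$ by part (2).

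The main obstacle I anticipate is purely bookkeeping rather than conceptual: because $B$ need not have a top element, one must be careful that the naive ``$a\notin\mathcal F$'' negation arguments familiar from unital Boolean algebras still work, and in particular that relative complements are used in place of absolute complements everywhere. The one spot needing genuine care is verifying in part (1) that the constructed filter $\mathcal F'$ really does omit $0$ and really does contain $\mathcal F$ (not merely a cofinal piece of it) — this is where the hypothesis $ab\neq 0$ for all $b\in\mathcal F$ is used in an essential way, and it is also the hypothesis that fails to transfer to an arbitrary pair, which is why (3) needs the relative complement $c=a\setminus b$ to get started.
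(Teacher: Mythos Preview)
Your proposal is correct and follows essentially the same route as the paper: construct the filter generated by $\mathcal F$ and $a$ as the upward closure of $\{ab\mid b\in\mathcal F\}$ and apply Zorn for (1); derive (2) by using (1) in the contrapositive for the forward direction and a direct contradiction for the converse; and for (3) pass to the principal filter on $c=a\setminus b$, extend to an ultrafilter, and use $bc=0$ to exclude $b$. The only cosmetic difference is that you invoke part (2) explicitly at the end of (3), whereas the paper appeals directly to the closure of filters under meets; both are immediate.
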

\begin{proof}
For the first item, let \[\mathcal F' = \{b\in B\mid \exists c\in \mathcal F\ \text{with}\ b\geq ac\}.\]  Then $0\notin \mathcal F'$ by hypothesis and clearly $\mathcal F'$ is a filter containing $\mathcal F$ and $a$.  By Zorn's lemma, there is an ultrafilter $\mathcal U$ containing $\mathcal F'$.

For the second item, suppose first that $\mathcal F$ is an ultrafilter.  If there exists $a\notin \mathcal F$ such $ab\neq 0$ for all $b\in \mathcal F$, then by the first item there is an ultrafilter containing $\mathcal F$ and $a$, contradicting that $\mathcal F$ is an ultrafilter.  Conversely, suppose that $\mathcal F$ has the desired property.  Let $\mathcal F'$ be a filter properly containing $\mathcal F$, and let $a\in \mathcal F'\setminus \mathcal F$.  Then there exists  $b\in \mathcal F$ with $ab=0$ and so $0\in \mathcal F'$, a contradiction.  Thus $\mathcal F'$ is an ultrafilter.

For the third item, note that $a\setminus b\neq 0$.  The set $\mathcal F'= \{c\in B\mid c\geq a\setminus b\}$ is a filter containing $a\setminus b$.  Hence, by the first item, it is contained in an ultrafilter $\mathcal F$.  As $a\geq a\setminus b$, we have that $a\in \mathcal F'\subseteq \mathcal F$.  Since $b(a\setminus b)=0$, we must have $b\notin \mathcal F$.
\end{proof}

\begin{Cor}
A mapping $\lambda\colon B\to \{0,1\}$ is a character if and only if $\lambda\inv(1)$ is an ultrafilter.
\end{Cor}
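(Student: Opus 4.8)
Write $\mathcal F:=\lambda\inv(1)$, so that $B\setminus\mathcal F=\lambda\inv(0)$, and prove the two implications by translating the five defining clauses of a character into statements about $\mathcal F$, and back. Throughout I will use only the filter axioms together with item~(2) of Lemma~\ref{l:boolean.facts}, i.e.\ that a filter is an ultrafilter exactly when every $a\notin\mathcal F$ has a companion $b\in\mathcal F$ with $ab=0$; this is the one place where the generalized (non-unital) Boolean algebra structure has to be handled with care, since one cannot simply pass to the complement of $a$.

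\textbf{Forward implication.} Assume $\lambda$ is a character. Then $\lambda(0)=0$ gives $0\notin\mathcal F$; $\lambda(B)\neq0$ gives $\mathcal F\neq\emptyset$; multiplicativity $\lambda(ab)=\lambda(a)\lambda(b)$ gives closure under meets; and, since $a\le b$ means $a\vee b=b$, the clause $\lambda(a\vee b)=\lambda(a)\vee\lambda(b)$ gives that $\mathcal F$ is upward closed. Hence $\mathcal F$ is a filter. To see it is an ultrafilter, take $a\notin\mathcal F$, pick any $c\in\mathcal F$, and put $b=c\setminus a$; then $\lambda(b)=\lambda(c)\setminus\lambda(a)=1\setminus0=1$, so $b\in\mathcal F$, while $ab\le a\wedge(c\setminus a)=0$. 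By item~(2) of Lemma~\ref{l:boolean.facts}, $\mathcal F$ is an ultrafilter.

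\textbf{Converse implication.} Assume $\mathcal F=\lambda\inv(1)$ is an ultrafilter. Then $\lambda(0)=0$ and $\lambda(B)\neq0$ are immediate from $0\notin\mathcal F\neq\emptyset$. For each of the remaining three clauses argue by cases on the values of $\lambda(a),\lambda(b)$. The cases in which the trivial value forces the answer are routine: if $\lambda(a)=0$ then $ab\le a$ and $a\setminus b\le a$ together with upward closure give $\lambda(ab)=\lambda(a\setminus b)=0$; if $\lambda(a)=1$ then $a\le a\vee b$ gives $\lambda(a\vee b)=1$; if $\lambda(b)=1$ then $(a\setminus b)\wedge b=0$ with $b\in\mathcal F$ forces $a\setminus b\notin\mathcal F$; and if $\lambda(a)=\lambda(b)=1$ then $ab\in\mathcal F$. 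The two genuinely nontrivial cases both use the ultrafilter criterion. If $\lambda(a)=\lambda(b)=0$ and, for contradiction, $a\vee b\in\mathcal F$: choose $c,d\in\mathcal F$ with $ac=bd=0$; then $cd\in\mathcal F$ and $(a\vee b)\wedge cd=(a\wedge cd)\vee(b\wedge cd)\le(a\wedge c)\vee(b\wedge d)=0$, so $0\in\mathcal F$, a contradiction; hence $\lambda(a\vee b)=0=\lambda(a)\vee\lambda(b)$. If $\lambda(a)=1$ and $\lambda(b)=0$: choose $c\in\mathcal F$ with $bc=0$; then $ac\in\mathcal F$, and $ac\le a$ with $ac\wedge b=a\wedge(bc)=0$ gives $ac\le a\setminus b$, so $a\setminus b\in\mathcal F$ by upward closure and $\lambda(a\setminus b)=1=\lambda(a)\setminus\lambda(b)$.

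\textbf{Main obstacle.} The only nonformal point is the use of item~(2) of Lemma~\ref{l:boolean.facts} in the converse direction to exclude $a\vee b$ from $\mathcal F$ and to manufacture a witness below $a\setminus b$ inside a Boolean algebra that need not have a top element; once those two cases are in hand, everything else is bookkeeping with $\wedge$, $\vee$, $\setminus$ and the order $\le$.
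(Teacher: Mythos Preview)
Your proof is correct and follows essentially the same route as the paper's: both directions hinge on item~(2) of Lemma~\ref{l:boolean.facts}, with the forward direction using the relative complement $b=c\setminus a$ of some $c\in\mathcal F$ to witness the ultrafilter criterion, and the converse handling the two nontrivial cases (join of two non-members, and $a\setminus b$ with $a\in\mathcal F$, $b\notin\mathcal F$) by producing witnesses $c,d\in\mathcal F$ that annihilate $a,b$ respectively. Your write-up is in fact slightly more careful than the paper's, which omits the explicit verification that $\lambda^{-1}(1)$ is a filter before invoking the ultrafilter criterion.
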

\begin{proof}
Suppose first that $\lambda$ is a character.  To verify that $\lambda\inv(1)$ is an ultrafilter, we must show by Lemma~\ref{l:boolean.facts} that if $\lambda(a)=0$, then there is $b$ with $\lambda(b)=1$ and $ab=0$.  Indeed, let $c\in \lambda\inv (1)$.  Put $b=c\setminus a$.  Then $\lambda(b)=\lambda(c)\setminus \lambda(a) = 1$ and so $b\in \lambda\inv(1)$.  Also $ba=0$.

Conversely, assume that $\lambda\inv(1)$ is an ultrafilter.  Since $\lambda\inv(1)$ is non-empty, we have that $\lambda\neq 0$.  As $\lambda\inv(1)$ is proper, we have that $\lambda(0)=0$.   Since $\lambda\inv(1)$ is upward closed, to show that $\lambda$ preserves joins, we must show that if $a,b\in \lambda\inv(0)$, then $a\vee b\in \lambda\inv(0)$.  Indeed, by Lemma~\ref{l:boolean.facts} we can find $a',b'\in \lambda\inv (1)$ with $aa'=0$ and $bb'=0$.  Then $a'b'\in \lambda\inv(1)$ and $(a\vee b)a'b' = aa'b'\vee ba'b' = 0$.  Thus $a\vee b\in \lambda\inv(0)$.  If $\lambda(a)=1=\lambda(b)$, then since $\lambda\inv(1)$ is a filter $\lambda(ab)=1$.  If $a$ or $b$ is not in $\lambda\inv (1)$, then since $\lambda\inv(1)$ is upward closed, it follows that $ab\notin\lambda\inv(1)$ and so $\lambda$ preserves meets.  To see that $\lambda$ preserves relative complements, if $a\notin \lambda\inv (1)$, then $a\setminus b\leq a$ implies $a\setminus b\notin \lambda\inv(1)$.  If $a,b\in \lambda\inv(1)$, then $b(a\setminus b)=0$ implies $a\setminus b\notin \lambda\inv(1)$ and so $\lambda(a\setminus b) =0=\lambda(a)\setminus \lambda(b)$.  If $\lambda(a)=1$ and $\lambda(b)=0$, then $be=0$ for some $e\in \lambda\inv(1)$ by Lemma~\ref{l:boolean.facts}.  Then $a\setminus b\geq (a\setminus b)e = ae\setminus be = ae\in \lambda\inv(1)$ and so $a\setminus b\in \lambda\inv(1)$.  Thus $\lambda(a\setminus b) = 1=\lambda(a)\setminus \lambda(b)$.  We conclude that $\lambda$ is a character.
\end{proof}

Our standing assumption now is that $R$ is a ring and $B\subseteq E(Z(R))$ is a sub-generalized Boolean algebra of the generalized Boolean algebra of central idempotents of $R$ that is also a set of local units for $R$, i.e., $R=\bigcup_{e\in B}eRe$.  Note that since $e$ is central, $eRe=Re=eR$.  For example, if $R$ is unital then we can take $B= E(Z(R))$ (and, in fact, any choice of $B$ will have to contain $0$ and $1$).  We will show that $R$ can be identified with the ring of global sections with compact support $\Gamma_c(\wh{B},\mathcal O_B)$ for a certain sheaf of rings $\mathcal O_B$ on $\wh{B}$.  When $R$ is unital and $B=E(Z(R))$, this will recover the Pierce representation~\cite{Pierce}.

If $e,f\in B$ with $f\leq e$, then there is a natural restriction $\rho^e_f\colon eR\to fR$ given by $r\mapsto fr$ satisfying the usual axioms for a directed system.  Thus, if $\lambda\in \wh{B}$, then we can define
\[R_{\lambda} = \varinjlim_{e\in \lambda\inv(1)} eR.\]
The following description of $R_{\lambda}$ is often more convenient.
Let \[I_{\lambda} = \{r\in R\mid \exists e\in \lambda\inv(1)\ \text{with}\ er=0\}.\] Observe that $I_{\lambda}$ is an ideal. Indeed, if $er=0=fs$ with $r,s\in R$ and $e,f\in \lambda\inv(1)$, then $ef\in \lambda\inv(1)$ and $ef(r-s) = fer-efs=0$ and, also,  $etr=ter=0$ and $ert=0$, for any $t\in R$.  Here we have used that $B$ consists of central idempotents. Note that, by construction, if $e\in \lambda\inv(1)$, then $r+I_{\lambda} = er+I_{\lambda}$ as $e(r-er) =0$. It is then straightforward to verify that $R/I_{\lambda}\cong R_{\lambda}$ via the map sending $r+I_{\lambda}$ to the class of $er$ where $e\in \lambda\inv(1)$.  From now on we put $[r]_{\lambda} = r+I_{\lambda}$ and identify $R_{\lambda}$ with $R/I_{\lambda}$.  Note that the ring $R_{\lambda}$ is unital with identity $[e]_{\lambda}$ with $e\in \lambda\inv(1)$.  Indeed, we already observed that $[r]_{\lambda}=[er]_{\lambda}$ and, since $e$ is central, also $[r]_{\lambda} = [re]_{\lambda}$.  We have that $[r]_{\lambda}=[s]_{\lambda}$ if and only if $er=es$ some $e\in \lambda\inv(1)$.

The underlying space of $\mathcal O_B$ is defined to be $E=\coprod_{\lambda\in \wh{B}} R_{\lambda}$. If $r\in R$ and $e\in B$, then we put
\[(r,D(e)) = \{[r]_{\lambda}\mid \lambda \in D(e)\}.\]   The reader will easily verify that the sets of the form $(r,D(e))$ form a basis for a topology on $E$ and that $p\colon E\to \wh{B}$ defined by $p([r]_{\lambda}) = \lambda$ maps $(r,D(e))$ homeomorphically to $D(e)$, whence $p$ is a local homeomorphism.  The reader will also check that the ring structures on the $R_{\lambda}$ turn $\mathcal O_B=(E,p,+,\cdot)$ into a sheaf of unital rings on $\wh{B}$.

Since $\wh{B}$ is Hausdorff, we can identify $\Gamma_c(\wh{B},\mathcal O_B)$ with the ring of continuous sections with compact support of $\mathcal O$ (under pointwise operations).  Note that since the image of the zero section is open, the support of a section is always closed.  Thus to show that a section has compact support it is enough to show that its support is contained in a compact set.
 To each $r\in R$, we define a mapping $\wh r\colon \wh {B}\to E$ by $\wh r(\lambda) = [r]_{\lambda}$.

\begin{Prop}\label{p:gelfand.transform}
Let $r\in R$.  Then $\wh r\in \Gamma_c(\wh{B},\mathcal O_B)$.
\end{Prop}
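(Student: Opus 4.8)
The plan is to use the remark made just above the proposition: since $\wh B$ is Hausdorff, $\Gamma_c(\wh B,\mathcal O_B)$ is exactly the set of continuous sections of $p$ whose support is contained in a compact set. So I would verify three things about $\wh r$, namely that it is a section of $p$, that it is continuous, and that it vanishes off a compact set. The first is immediate from the definitions: $p(\wh r(\lambda)) = p([r]_\lambda) = \lambda$ for every $\lambda\in\wh B$, by the way $p$ was defined on $E=\coprod_{\lambda}R_\lambda$.

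For continuity I would use the local-homeomorphism structure of $p$ rather than chase preimages of basic open sets. Fix $e\in B$. By construction $p$ restricts to a homeomorphism $(r,D(e))\to D(e)$, and the inverse of this homeomorphism sends $\lambda\in D(e)$ to the unique point of $(r,D(e))$ lying over $\lambda$, which is $[r]_\lambda=\wh r(\lambda)$. Hence $\wh r|_{D(e)}$ coincides with $(p|_{(r,D(e))})^{-1}$ and is therefore continuous. Since every character is non-zero, the sets $D(e)$ with $e\in B$ form an open cover of $\wh B$, and a map that is continuous on each member of an open cover is continuous; so $\wh r$ is continuous.

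For the support, I would use that $B$ is a set of local units for $R$: choose $e\in B$ with $r\in eR$, so that, as $e$ is central, $er=r$. I claim $\wh r$ vanishes off $D(e)$. Indeed, suppose $\lambda\notin D(e)$, i.e.\ $\lambda(e)=0$. Since $\lambda^{-1}(1)$ is an ultrafilter, part~(2) of Lemma~\ref{l:boolean.facts} provides $f\in\lambda^{-1}(1)$ with $ef=0$; then $f(er)=(ef)r=0$, so $er\in I_\lambda$, and hence $\wh r(\lambda)=[r]_\lambda=[er]_\lambda=[0]_\lambda$. Thus the support of $\wh r$ is contained in $D(e)$, which is compact, and it is automatically closed because the zero section has open image; this gives $\wh r\in\Gamma_c(\wh B,\mathcal O_B)$. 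The only step with any real content is this last one: producing the local unit $e$ with $er=r$ and then running the short ultrafilter argument to see that $[r]_\lambda$ is already zero whenever $\lambda(e)=0$; the rest is just unwinding the topology on $E$ that has already been set up.
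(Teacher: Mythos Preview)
Your proof is correct and follows essentially the same approach as the paper: the section and compact-support arguments are identical (local unit $e$ with $er=r$, then the ultrafilter argument via Lemma~\ref{l:boolean.facts}(2)). For continuity the paper checks preimages of basic sets $(s,D(e))$ directly, whereas you observe more cleanly that $\wh r|_{D(e)}=(p|_{(r,D(e))})^{-1}$ and patch over the open cover $\{D(e)\}_{e\in B}$; both arguments rest on the same already-established fact that $p$ maps $(r,D(e))$ homeomorphically to $D(e)$.
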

\begin{proof}
Clearly, the mapping $\wh r$ is a section of $p$.  Let us check continuity.  Let $(s,D(e))$ with $s\in R$ and $e\in B$ be a basic neighborhood in $E$ of $\wh r(\lambda)$.  Then $\lambda(e)=1$ and $[s]_{\lambda} = [r]_{\lambda}$.  Thus we can find $f\in \lambda\inv(1)$ with $fs=fr$.  Then $efs=efr$ and $ef\in \lambda\inv(1)$.  Thus  $\lambda\in D(ef)$ and if $\tau\in D(ef)$, then $\tau\in D(e)$ and $\wh r(\tau) = [r]_{\tau}=[s]_{\tau}\in (s,D(e))$.  Thus $\wh r$ is continuous.  We need to check that $\wh r$ has compact support.  Since $B$ is a set of local units for $R$, we can find $e\in B$ with $er=r$.  We claim that the support of $\wh r$ is contained in the compact set $D(e)$, whence $\wh r$ has compact support.  Suppose that $\lambda\notin D(e)$.  Then $e\notin\lambda\inv(1)$ and so, by Lemma~\ref{l:boolean.facts}, there exists $f\in \lambda\inv(1)$ with $fe=0$.  Then $fr=fer=0$ and so $[r]_{\lambda} = [0]_{\lambda}$.  This completes the proof.
\end{proof}

\begin{Rmk}\label{r:control.support}
The above proof shows that if $er=r$ with $e\in B$, then $\wh r$ has support contained in $D(e)$.
\end{Rmk}

\begin{Thm}\label{t:ring.of.sec}
Let $R$ be a ring and $B\subseteq E(Z(A))$ a sub-generalized boolean algebra that is a set of local units for $R$.  Then $R\cong \Gamma_c(\wh B,\mathcal O_B)$ via the mapping $r\mapsto \wh r$ with $\wh r(\lambda)=[r]_{\lambda}$.
\end{Thm}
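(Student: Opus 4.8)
The plan is to show that the map $\Phi\colon R\to\Gamma_c(\wh B,\mathcal O_B)$, $r\mapsto\wh r$, is a ring isomorphism. By Proposition~\ref{p:gelfand.transform}, $\Phi$ is well defined. First I would check that $\Phi$ is a ring homomorphism: for each $\lambda\in\wh B$ the quotient map $R\to R/I_\lambda=R_\lambda$ is a ring homomorphism, so $\widehat{r+s}(\lambda)=[r+s]_\lambda=[r]_\lambda+[s]_\lambda=(\wh r+\wh s)(\lambda)$ and likewise $\widehat{rs}(\lambda)=[r]_\lambda[s]_\lambda=(\wh r\ast\wh s)(\lambda)$, recalling that on sections supported on the unit space (here, all of $\wh B$) the convolution product is the pointwise product. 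Thus $\Phi$ is additive and multiplicative.

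Next I would prove injectivity. If $\wh r=0$, then $[r]_\lambda=0$ for every $\lambda\in\wh B$, i.e.\ for each $\lambda$ there is $e_\lambda\in\lambda\inv(1)$ with $e_\lambda r=0$. Pick $e\in B$ with $er=r$, so the support of $\wh r$ is contained in $D(e)$ (Remark~\ref{r:control.support}); we must show $e=0$. If $e\neq 0$, then by Lemma~\ref{l:boolean.facts}(3) (applied with $a=e$, $b=0$, since $e\nleq 0$) there is an ultrafilter $\mathcal F$ with $e\in\mathcal F$; let $\lambda=\chi_{\mathcal F}$, so $\lambda\in D(e)$ and $[r]_\lambda=0$ gives $f\in\lambda\inv(1)$ with $fr=0$. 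But then $ef\in\lambda\inv(1)$, so $ef\neq 0$, while $efr=fr=0$ forces $efr=0$; this is fine so far, so I instead argue directly: $r=er$ and, choosing $f\in\lambda\inv(1)$ with $fr=0$, the element $ef\in\mathcal F$ satisfies $efr=efr$, and $efr = e(fr)=0$, hence $efr=0$ while $ef\neq 0$ — so I conclude $efR=0$ is not immediate. The clean route: for \emph{every} $\lambda\in D(e)$ we get $f_\lambda\in\lambda\inv(1)$ with $f_\lambda r=0$; the sets $D(f_\lambda)$ cover the compact set $D(e)$, so finitely many $D(f_{\lambda_1}),\dots,D(f_{\lambda_n})$ cover it, whence $e\leq f_{\lambda_1}\vee\cdots\vee f_{\lambda_n}=:f$, so $ef=e$ and $er=efr$; but $f_{\lambda_i}r=0$ for all $i$ gives $fr=0$ (as $f$ is a join of the $f_{\lambda_i}$ and $r\mapsto gr$ is additive in $g$ over relative complements), hence $r=er=efr=0$. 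This gives injectivity.

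Finally, surjectivity. Let $t\in\Gamma_c(\wh B,\mathcal O_B)$. Since $\wh B$ is Hausdorff and $t$ has compact support, and the sets $(r,D(e))$ form a basis of $E$, for each $\lambda$ in the support there are $r_\lambda\in R$ and a compact open neighborhood $W_\lambda=D(e_\lambda)$ of $\lambda$ with $t(\mu)=[r_\lambda]_\mu$ for all $\mu\in W_\lambda$. By compactness of $\supp(t)$, finitely many $D(e_1),\dots,D(e_n)$ with elements $r_1,\dots,r_n$ cover $\supp(t)$ and $t(\mu)=[r_i]_\mu$ on $D(e_i)$. Using that $B$ is a generalized Boolean algebra, disjointify: set $f_1=e_1$ and $f_i=e_i\setminus(f_1\vee\cdots\vee f_{i-1})$, so the $f_i\in B$ are pairwise orthogonal, $f_i\leq e_i$, $\bigvee f_i=\bigvee e_i$, and $t(\mu)=[r_i]_\mu$ for $\mu\in D(f_i)$. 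Put $r=f_1r_1+\cdots+f_nr_n\in R$. For $\mu\in D(f_i)$ we have $f_i\in\mu\inv(1)$ and $f_j\in\mu\inv(0)$ for $j\neq i$ (the $f_j$ being orthogonal to $f_i$, so $f_if_j=0$ and $f_j\notin\mu\inv(1)$ since it would force $f_if_j\in\mu\inv(1)$); hence $[r]_\mu=[f_ir_i]_\mu=[r_i]_\mu=t(\mu)$. For $\mu\notin\bigcup D(f_i)$ we have $f_i\notin\mu\inv(1)$ for all $i$, so each $f_j\in\mu\inv(0)$ and thus $[r]_\mu=0=t(\mu)$ (the latter since $\supp(t)\subseteq\bigcup D(f_i)$). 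Therefore $\wh r=t$, proving surjectivity. The main obstacle is the injectivity argument: one must be careful not to conclude $e=0$ pointwise but rather to use compactness of $D(e)$ to pass from the local annihilators $f_\lambda$ to a single $f\in B$ with $ef=e$ and $fr=0$; everything else is a routine translation of sheaf-theoretic patching into the Boolean-algebra language, mirroring the classical Pierce argument.
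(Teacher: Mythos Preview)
Your argument is correct; the surjectivity and homomorphism steps match the paper essentially verbatim. The one genuine difference is in injectivity. The paper argues algebraically: given $r\neq 0$, it considers the filter $\mathcal F=\{e\in B\mid er=r\}$, extends it to an ultrafilter $\lambda^{-1}(1)$, and then shows directly that $[r]_\lambda\neq 0$ by observing that if $fr=0$ with $f\in\lambda^{-1}(1)$, then for any $e\in\mathcal F$ one has $(e\setminus ef)r=r$, forcing $e\setminus ef\in\lambda^{-1}(1)$ while $f(e\setminus ef)=0$, a contradiction. Your ``clean route'' is instead topological: you cover the compact set $D(e)$ by the open sets $D(f_\lambda)$ coming from local annihilators, pass to a finite subcover, and take the join $f$ to get a single element of $B$ with $ef=e$ and $fr=0$. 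Both are standard Pierce-style moves; the paper's is slightly slicker in that it avoids the auxiliary check that $fr=0$ follows from $f_{\lambda_i}r=0$ (which you correctly note follows from the additive formula for joins of central idempotents), while yours has the virtue of making explicit how compactness of the $D(e)$ drives the argument. Your initial false start in the injectivity paragraph should of course be excised in a final write-up.
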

\begin{proof}
Define $\Psi\colon R\to \Gamma_c(\wh B,\mathcal O_B)$ by $\Psi(r) = \wh r$.
If $r,s\in R$ and $\lambda\in \wh B$, then $(\wh r+\wh s)(\lambda) = [r]_{\lambda}+[s]_{\lambda} = [r+s]_{\lambda}=\wh {r+s}(\lambda)$ and similarly, $\wh{r}\wh{s} = \wh{rs}$.  Thus $\Psi$ is a homomorphism.  Suppose $0\neq r\in R$.  Let $\mathcal F= \{e\in B\mid er=r\}$. Since $B$ is a set of local units, $\mathcal F\neq\emptyset$ and $0\notin \mathcal F$ as $r\neq 0$.  Trivially, $\mathcal F$ is a filter.  Thus we can find a character $\lambda$ with $\mathcal F\subseteq \lambda\inv(1)$ by Lemma~\ref{l:boolean.facts}.  We claim that $0\neq \wh r(\lambda)=[r]_{\lambda}$.  Indeed, suppose that $f\in \lambda\inv(1)$ with $fr=0$.  Let $e\in \mathcal F$.  Then $(e\setminus ef)r =(e-ef)r = r$ and so $e\setminus ef\in \mathcal F\subseteq \lambda\inv(1)$.  But then $0=f(e-ef)=f(e\setminus ef)\in \lambda\inv(1)$, a contradiction.  Thus $\wh r(\lambda)\neq 0$ and so $\wh r\neq 0$.  We conclude that $\Psi$ is injective.

The most difficult part of the proof is to show that $\Psi$ is surjective.  Let $f\colon \wh B\to E$ be a continuous section with compact support $\supp(f)$.  For each $\lambda\in \supp(f)$, choose $r_{\lambda}\in R$ with $f(\lambda) = [r_{\lambda}]_{\lambda}$ and $e_{\lambda}\in \lambda\inv(1)$.  Then $f(\lambda)\in (r_{\lambda},D(e_{\lambda}))$ and so we can find a compact open set $D(b_{\lambda})$ with $b_{\lambda}\in B$ and $f(D(b_{\lambda}))\subseteq (r_{\lambda},D(e_{\lambda}))$.  Since $\supp(f)$ is compact, we can find $\lambda_1,\ldots, \lambda_n$ such that $\supp(f)\subseteq D(b_{\lambda_1})\cup \cdots\cup D(b_{\lambda_n})$.  Note that $f(\lambda) = [r_{\lambda_i}]_{\lambda}$ for all $\lambda\in D(b_{\lambda_i})$.  By putting $b_1 = b_{\lambda_1}$ and, inductively, $b_i=b_{\lambda_i}\setminus (b_1\vee\ b_2\vee \cdots\vee b_{i-1})$, we can find mutually orthogonal $b_1,\ldots, b_n$ with $b_i\leq b_{\lambda_i}$ and $b_1\vee\cdots \vee b_n = b_{\lambda_1}\vee\cdots \vee b_{\lambda_n}$.  In particular, $D(b_i)\subseteq D(b_{\lambda_i})$ for $i=1,\ldots, n$ and $\supp(f)\subseteq D(b_1)\cup\cdots\cup D(b_n)$ and this union is disjoint.  Putting $r_i = b_ir_{\lambda_i}$ we have that $f(\lambda) = [r_{\lambda_i}]_{\lambda} = [r_i]_{\lambda}$ for all $\lambda\in D(b_i)$.  Put $r=r_1+\cdots +r_n$.  We claim that $\wh r= f$.  Let $b=\bigvee_{i=1}^n b_i=b_1+\cdots+b_n$.  Then \[br= (b_1+\cdots+b_n)r =(b_1+\cdots +b_n)(r_1+\cdots +r_n) = r_1+\cdots +r_n = r\] by orthogonality of $b_1,\ldots, b_n$. Remark~\ref{r:control.support} shows that $\supp(\wh r)\subseteq D(b)$.   On the other hand, $\supp(f)\subseteq D(b_1)\cup\cdots\cup D(b_n) = D(b)$, as well.   Suppose that $\lambda\in D(b)$.  Then there is a unique $i$ with $\lambda\in D(b_i)$.  Then we compute that \[\wh r(\lambda) = [r_1+\cdots +r_n]_{\lambda} = [b_i(r_1+\cdots+r_n)]_{\lambda} = [r_i]_{\lambda}=f(\lambda).\]  This completes the proof that $\Psi(r) = f$.
\end{proof}

The most important special case of Theorem~\ref{t:ring.of.sec} is when $B=E(Z(A))$, in which case $\wh B$ is known as the \emph{Pierce spectrum} of $A$~\cite{Pierce}.  Since we shall need it a lot in the next section, we denote by $\wh A$ the Pierce spectrum of $A$ and $\mathcal O_A$ the sheaf of unital rings constructed above for $B=E(Z(A))$ (assuming that $A$ has a set of central local units).  If $A$ is a commutative ring, then each stalk $\mathcal O_{A,\lambda}$ is an \emph{indecomposable ring}, that is, has no (central) idempotents except $0$ and $1$.  Indeed, if $[a]_{\lambda}$ is idempotent, then there exists $e\in \lambda\inv(1)$ with $ea=ea^2=(ea)^2$ and so $[a]_{\lambda}=[f]_{\lambda}$ with $f=ea$ an idempotent of $A$.  If $f\in \lambda\inv(1)$, then $[f]_{\lambda}$ is the identity of $\mathcal O_{A,x}$, whereas if $f\notin \lambda\inv(1)$, then $fe'=0$ for some $e'\in \lambda\inv(1)$ by Lemma~\ref{l:boolean.facts} and so $[f]_{\lambda} = [e'f]_{\lambda}=[0]_{\lambda}$. Thus we have the following corollary, generalizing Pierce's sheaf representation theorem from the unital case to the case of rings with local units.

\begin{Cor}\label{c:pierce}
Let $A$ be a ring with central local units.  Then $A\cong \Gamma_c(\wh A,\mathcal O_A)$ where $\wh A$ is the Pierce spectrum of $A$.  If $A$ is commutative, then $\mathcal O_A$ is a sheaf of indecomposable unital rings and hence every commutative ring with local units is the ring of global sections with compact support of a sheaf of indecomposable unital rings on a generalized Stone space.
\end{Cor}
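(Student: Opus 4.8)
The plan is to read this off directly from Theorem~\ref{t:ring.of.sec} applied with $B=E(Z(A))$, together with the elementary stalk computation carried out in the paragraph preceding the statement. First I would verify that $B=E(Z(A))$ genuinely satisfies the hypotheses of Theorem~\ref{t:ring.of.sec}, i.e., that it is a set of local units for $A$. By assumption $A$ has a set $E_0$ of central local units; each element of $E_0$ is a central idempotent, hence lies in $E(Z(A))$, so $A=\bigcup_{e\in E_0}eAe\subseteq\bigcup_{e\in E(Z(A))}eAe\subseteq A$. Directedness of the family $\{eAe\mid e\in E(Z(A))\}$ is automatic because $E(Z(A))$ is a generalized Boolean algebra: for $e,f\in E(Z(A))$ one has $e,f\le e\vee f$, so $eAe,fAf\subseteq (e\vee f)A(e\vee f)$. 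Thus Theorem~\ref{t:ring.of.sec} gives $A\cong\Gamma_c(\wh B,\mathcal O_B)=\Gamma_c(\wh A,\mathcal O_A)$ via $r\mapsto\wh r$, which is the first assertion.

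For the second assertion, assume $A$ is commutative and fix $\lambda\in\wh A$; I would show the stalk $\mathcal O_{A,\lambda}=A/I_\lambda$ has no idempotents besides $0$ and $1$. Suppose $[a]_\lambda$ is idempotent, so $a^2-a\in I_\lambda$, meaning $e(a^2-a)=0$ for some $e\in\lambda\inv(1)$. Set $f=ea$; commutativity yields $f^2=e^2a^2=ea^2=ea=f$, so $f$ is an idempotent of $A$, hence $f\in E(Z(A))=B$, and $[a]_\lambda=[ea]_\lambda=[f]_\lambda$. If $\lambda(f)=1$ then $f\in\lambda\inv(1)$ and $[f]_\lambda$ is the identity of $\mathcal O_{A,\lambda}$; if $\lambda(f)=0$ then by Lemma~\ref{l:boolean.facts} there is $e'\in\lambda\inv(1)$ with $fe'=0$, so $f\in I_\lambda$ and $[f]_\lambda=0$. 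Hence every stalk is indecomposable.

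Finally, the closing ``hence'' is immediate: in a commutative ring every idempotent is central, so ``local units'' and ``central local units'' coincide there, and applying the two parts just proved to an arbitrary commutative ring with local units finishes the proof. I do not expect any real obstacle here, as the corollary is essentially a repackaging of Theorem~\ref{t:ring.of.sec}; the only points deserving a line of care are the verification that $E(Z(A))$ is a \emph{directed} set of local units and the observation that $f=ea$ is an idempotent, which is precisely the place where commutativity is used.
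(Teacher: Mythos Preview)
Your proposal is correct and follows essentially the same approach as the paper: the paper proves the corollary in the paragraph immediately preceding it by invoking Theorem~\ref{t:ring.of.sec} with $B=E(Z(A))$ and carrying out exactly the stalk computation you reproduce. Your added verifications that $E(Z(A))$ is a directed set of local units and that local units in a commutative ring are automatically central are just explicit justifications of points the paper leaves implicit.
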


It follows from Theorem~\ref{t:disint} that we can identify the category of unitary $A$-modules with the category of sheaves of $\mathcal O_A$-modules in the context of Corollary~\ref{c:pierce}.

Note that if $X$ is a Hausdorff space with a basis of compact open sets and $K$ is an indecomposable commutative ring, then it is easy to verify that $X$ is homeomorphic to the Pierce spectrum of the ring $C_c(X,K)$ of locally constant $K$-valued functions with compact support.

\section{Skew inverse semigroup rings as groupoid convolution algebras}\label{isrAsGpd}

We now aim to prove the converse of Theorem~\ref{t:factor.through} by showing that every skew inverse semigroup ring (with respect to a spectral action) is isomorphic to a groupoid convolution algebra.
Let $\alpha$ be a spectral action of an inverse semigroup $S$ on a ring $A$.  We want to define a Boolean action of $S$ on the Pierce spectrum $\wh A$  of $A$.  For $s\in S$, let us put \[\wh D_{s} = \{\lambda\in \wh A\mid \lambda(1_{ss^*})=1\}.\]   The reader should verify that $\wh D_s$ is compact open (it is the basic compact open associated to $1_{ss^*}\in E(Z(A))$) and can be identified with the Pierce spectrum of $D_s$.   Note that $\wh D_s=\emptyset$ if and only if $1_{ss^*}=0$, in which case $1_{s^*s}=\alpha_{s^*}(1_{ss^*})=0$ and so $\wh D_{s^*}=\emptyset$, as well. Define $\wh \alpha_s\colon \wh D_{s^*}\to \wh D_s$ by \[\wh \alpha_s(\lambda)(e) = \lambda (\alpha_{s^*}(e1_{ss^*}))\] for $e\in Z(E(A))$.

\begin{Prop}\label{p:gen.bool.hom}
For each $s\in S$ and $\lambda\in \wh D_{s^*}$, we have that $\wh\alpha_s(\lambda)\in \wh D_s$.
\end{Prop}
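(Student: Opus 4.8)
The plan is this. Observe first that $\wh D_s$ is a subset of the Pierce spectrum $\wh A$, whose points are the characters of $B=E(Z(A))$; so the statement amounts to two claims, namely that $e\mapsto\lambda(\alpha_{s^*}(e1_{ss^*}))$ is a well-defined character of $B$, and that this character sends $1_{ss^*}$ to $1$. I would first dispatch well-definedness: for $e\in B$ the element $e1_{ss^*}$ is a central idempotent of $A$ lying in $D_s=A1_{ss^*}$, hence a central idempotent of the ring $D_s$, so $\alpha_{s^*}(e1_{ss^*})$ is a central idempotent of $D_{s^*}$ (as $\alpha_{s^*}\colon D_s\to D_{s^*}$ is a ring isomorphism), and therefore, by the remark in Section~\ref{issr} that a central idempotent of an ideal is central in the ambient ring, an element of $B$; thus $\lambda$ may be applied to it.

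For the homomorphism property I would exhibit $\wh\alpha_s(\lambda)$ as a composite of three homomorphisms of generalized Boolean algebras, $B\to E(Z(D_s))\to E(Z(D_{s^*}))\to\{0,1\}$, where $E(Z(D_s))$ is identified with the interval $\{e\in B:e\leq 1_{ss^*}\}$ and $E(Z(D_{s^*}))$ with $\{e\in B:e\leq 1_{s^*s}\}$: the first map is the truncation $e\mapsto e1_{ss^*}$, the second the restriction of the ring isomorphism $\alpha_{s^*}$, and the third the restriction of $\lambda$. The first map is a surjective homomorphism of generalized Boolean algebras because multiplication by the central idempotent $1_{ss^*}$ commutes with meet, with join $e\vee f=e+f-ef$, and with relative complement $e\setminus f=e-ef$; the second is a Boolean isomorphism since a ring isomorphism carries the ordered set of central idempotents of its domain isomorphically onto that of its codomain; and the third preserves $0$, meets, joins and relative complements because $\lambda$ is a character and $\{e\in B:e\leq 1_{s^*s}\}$ is a generalized Boolean subalgebra of $B$. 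A composite of such maps is again a homomorphism $B\to\{0,1\}$, so the only thing still missing for it to be a character is nonvanishing.

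That nonvanishing comes for free from the membership claim. Since the ring isomorphism $\alpha_{s^*}\colon D_s\to D_{s^*}$ carries the identity $1_{ss^*}$ of $D_s$ to the identity $1_{s^*s}$ of $D_{s^*}$ --- the identity $\alpha_{s^*}(1_{ss^*})=1_{s^*s}$ recorded earlier for spectral actions --- I would compute
\[ \wh\alpha_s(\lambda)(1_{ss^*}) = \lambda\bigl(\alpha_{s^*}(1_{ss^*}1_{ss^*})\bigr) = \lambda\bigl(\alpha_{s^*}(1_{ss^*})\bigr) = \lambda(1_{s^*s}) = 1, \]
the last equality because $\lambda\in\wh D_{s^*}=\{\mu\in\wh A:\mu(1_{s^*s})=1\}$. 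Hence $\wh\alpha_s(\lambda)\neq 0$, so it is genuinely a character, and it lies in $\wh D_s$.

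I do not expect a real obstacle here; the only mildly fiddly point is the bookkeeping that identifies the central idempotents of $D_s$ and of $D_{s^*}$ with intervals in $B$ and checks that truncation by $1_{ss^*}$ respects relative complements, which is routine once the three-step factorization is in place. As an alternative to that factorization, one may simply verify in a line each that $\wh\alpha_s(\lambda)$ preserves $0$, products, joins and relative complements, using only that $\alpha_{s^*}$ is a ring homomorphism and $\lambda$ a character.
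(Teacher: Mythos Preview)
Your proposal is correct and follows essentially the same approach as the paper. The paper compresses your three-step factorization into a single observation---that $a\mapsto\alpha_{s^*}(a1_{ss^*})$ is a surjective ring homomorphism $A\to D_{s^*}$ and hence induces a generalized Boolean algebra homomorphism on central idempotents---and then composes with $\lambda$; your explicit separation into truncation, $\alpha_{s^*}$, and $\lambda$ is the same argument unpacked, and your care in checking that $\alpha_{s^*}(e1_{ss^*})\in E(Z(A))$ makes explicit a step the paper leaves implicit.
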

\begin{proof}
 Note that if $\lambda\in \wh D_{s^*}$, then $\lambda(1_{s^*s})=1$ and so \[\wh \alpha_s(\lambda)(1_{ss^*}) = \lambda(\alpha_{s^*}(1_{ss^*})) = \lambda(1_{s^*s})=1.\]  We check that $\wh \alpha_s(\lambda)$ is a generalized Boolean algebra homomorphism.
It is clearly non-zero as $\wh \alpha_s(\lambda)(1_{ss^*}) =1$.  Since $1_{ss^*}$ is a central idempotent, the mapping $A\to D_{s^*}$ given by $a\mapsto \alpha_{s^*}(a1_{ss^*})$ is a surjective ring homomorphism and hence induces a generalized Boolean algebra homomorphism on central idempotents.  Composing with $\lambda$ shows that $\wh\alpha_s(\lambda)$ is a homomorphism of generalized Boolean algebras, and so $\wh\alpha_s(\lambda)\in \wh D_s$, as required.
\end{proof}

\begin{Prop}\label{p:is.a.hom}
$\wh \alpha_s\colon \wh D_{s^*}\to \wh D_s$ is a homeomorphism.
\end{Prop}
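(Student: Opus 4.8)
The plan is to exhibit $\wh\alpha_{s^*}$ as a two-sided inverse of $\wh\alpha_s$ and to check that both maps are continuous; the asserted homeomorphism then follows at once. By Proposition~\ref{p:gen.bool.hom}, both $\wh\alpha_s\colon \wh D_{s^*}\to \wh D_s$ and $\wh\alpha_{s^*}\colon \wh D_s\to \wh D_{s^*}$ are well defined, so these are the only two points to establish. Throughout I would use the identifications $D_s = D_{ss^*} = A1_{ss^*}$ and $D_{s^*} = D_{s^*s} = A1_{s^*s}$, the fact that $\alpha_{s^*}\alpha_s = \alpha_{s^*s}$ is the identity map of $D_{s^*}$, and the fact that every character in $\wh D_{s^*}$ takes the value $1$ on $1_{s^*s}$ (and symmetrically for $\wh D_s$ and $1_{ss^*}$).

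For the inverse, fix $\lambda\in \wh D_{s^*}$ and $e\in E(Z(A))$. Unwinding the two definitions,
\[
\wh\alpha_{s^*}\big(\wh\alpha_s(\lambda)\big)(e) = \wh\alpha_s(\lambda)\big(\alpha_s(e1_{s^*s})\big) = \lambda\Big(\alpha_{s^*}\big(\alpha_s(e1_{s^*s})1_{ss^*}\big)\Big).
\]
Since $e1_{s^*s}\in D_{s^*}$, its image $\alpha_s(e1_{s^*s})$ lies in $D_s = A1_{ss^*}$, so the inner factor $1_{ss^*}$ is redundant; applying $\alpha_{s^*}\alpha_s = \mathrm{id}_{D_{s^*}}$ then gives $\lambda(e1_{s^*s}) = \lambda(e)\lambda(1_{s^*s}) = \lambda(e)$. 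Hence $\wh\alpha_{s^*}\circ\wh\alpha_s = \mathrm{id}_{\wh D_{s^*}}$, and interchanging the roles of $s$ and $s^*$ yields $\wh\alpha_s\circ\wh\alpha_{s^*} = \mathrm{id}_{\wh D_s}$. Thus $\wh\alpha_s$ is a bijection with inverse $\wh\alpha_{s^*}$.

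For continuity, recall that a basis for the subspace topology on $\wh D_s$ is given by the compact open sets $D(f)\cap\wh D_s = D(f1_{ss^*})$ with $f\in E(Z(A))$. For such $f$, computing $\wh\alpha_s(\lambda)(f1_{ss^*}) = \lambda(\alpha_{s^*}(f1_{ss^*}))$ yields
\[
\wh\alpha_s^{-1}\big(D(f1_{ss^*})\big) = \{\lambda\in\wh D_{s^*}\mid \lambda(\alpha_{s^*}(f1_{ss^*}))=1\} = D\big(\alpha_{s^*}(f1_{ss^*})\big)\cap\wh D_{s^*},
\]
using that $\alpha_{s^*}(f1_{ss^*})$ is a central idempotent of $D_{s^*}$, hence of $A$, so that the set on the right is a genuine basic open of $\wh D_{s^*}$. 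Therefore $\wh\alpha_s$ is continuous, and by symmetry so is $\wh\alpha_{s^*}$, so $\wh\alpha_s$ is a homeomorphism. (Alternatively, continuity is immediate from the observation in the proof of Proposition~\ref{p:gen.bool.hom} that $\wh\alpha_s$ is the restriction to $\wh D_{s^*}$ of the map on character spaces dual to the generalized Boolean algebra homomorphism induced by the surjective ring map $a\mapsto\alpha_{s^*}(a1_{ss^*})$.)

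I do not expect a genuine obstacle here: the core of the argument is a short diagram chase. The only point demanding some care is the bookkeeping of the identifications $D_s = A1_{ss^*}$, $D_{s^*} = A1_{s^*s}$ and the use of the normalization $\lambda(1_{s^*s})=1$ for characters in $\wh D_{s^*}$, which is precisely what makes the composite $\wh\alpha_{s^*}\circ\wh\alpha_s$ collapse to the identity.
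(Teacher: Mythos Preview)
Your proposal is correct and follows essentially the same approach as the paper: both verify that $\wh\alpha_{s^*}$ is a two-sided inverse of $\wh\alpha_s$ via the identical direct computation, and both establish continuity by pulling back basic open sets of the form $D(f)$ with $f\leq 1_{ss^*}$ to $D(\alpha_{s^*}(f))$. Your presentation of the continuity step (computing the full preimage of a basic open) is marginally tidier than the paper's pointwise neighborhood verification, but the content is the same.
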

\begin{proof}
This is clear if $\wh D_{s^*}=\emptyset=\wh D_s$.  So assume this is not the case.
To see that $\wh \alpha_s$ is bijection, we note that for $\lambda\in D_{s^*}$
\begin{align*}
\wh \alpha_{s^*}(\wh\alpha_s(\lambda))(e) & = \wh\alpha_s(\lambda)(\alpha_s(e1_{s^*s})) = \lambda(\alpha_{s^*}(\alpha_s(e1_{s^*s})1_{ss^*}))\\ & = \lambda(\alpha_{s^*s}(e1_{s^*s})) =\lambda(e1_{s^*s})= \lambda(e)\lambda(1_{s^*s}) = \lambda(e).
\end{align*}
 A dual verification shows that $\wh \alpha_s$ and $\wh \alpha_{s^*}$ are inverses.

 We now check continuity of $\wh \alpha_s$.  Continuity of $\wh \alpha_{s^*}$ will then imply that $\wh \alpha_s$ is a homeomorphism.   A basic neighborhood $V$ of $\wh \alpha_s(\lambda)$, with $\lambda\in \wh D_{s^*}$, is  of the form $V=\{\mu\in \wh A\mid \mu(f)=1\}$ where $f\in E(Z(A))$ with $\wh \alpha_s(\lambda)(f)=1$.  As $\wh \alpha_s(\lambda)\in \wh D_s$, we may assume that $f1_{ss^*} =f$, i.e., $f\in D_s$.  Let $f' = \alpha_{s^*}(f)$; it is a central idempotent of $D_{s^*}$ and hence of $A$.  The set $U = \{\mu \in \wh D_{s^*}\mid \mu(f')=1\}$ is an open set.  Also $\lambda(f') = \lambda(\alpha_{s^*}(f)) = \wh \alpha_s(\lambda)(f)=1$ and so $\lambda\in U$. If $\mu\in U$, then $\wh\alpha_s(\mu)(f) = \mu(\alpha_{s^*}(f))=\mu(f')=1$ and so $\wh \alpha_s(U)\subseteq V$.  This establishes the continuity of $\wh\alpha_s$.
\end{proof}

We now need to verify that $s\mapsto \wh \alpha_s$ is a homomorphism of inverse semigroups $S\to I_{\wh A}$.

\begin{Prop}\label{p:is.hom}
The mapping $\wh \alpha\colon S\to I_{\wh A}$ given by $\wh\alpha(s)=\wh\alpha_s$ is a homomorphism.
\end{Prop}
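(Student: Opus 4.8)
The plan is to invoke the homomorphism criterion recorded in Section~\ref{ample groupoids} (see~\cite[Chapter~3, Theorem~5]{Lawson}): a map $\wh\alpha\colon S\to I_{\wh A}$ is a homomorphism of inverse semigroups as soon as it is order preserving, restricts to a homomorphism on the idempotents $E(S)$, and satisfies $\wh\alpha_{st}=\wh\alpha_s\wh\alpha_t$ whenever $s^*s=tt^*$. Propositions~\ref{p:gen.bool.hom} and~\ref{p:is.a.hom} already guarantee that each $\wh\alpha_s$ is a well-defined element of $I_{\wh A}$, so only these three properties remain. I will use freely that $e\mapsto 1_e$ is an order-preserving homomorphism $E(S)\to E(Z(A))$, so that $\wh D_s=D(1_{ss^*})$ behaves monotonically in $ss^*$, and that the given action $\alpha$ is itself a homomorphism, so that $s\le t$ forces $\alpha_{s^*}$ to be the restriction of $\alpha_{t^*}$ and $(st)^*=t^*s^*$ forces $\alpha_{(st)^*}=\alpha_{t^*}\alpha_{s^*}$.

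For the idempotent part, I would first note that for $e\in E(S)$ and a central idempotent $e'$ of $A$, one has $\wh\alpha_e(\lambda)(e')=\lambda(\alpha_e(e'1_e))=\lambda(e'1_e)=\lambda(e')$ for $\lambda\in\wh D_e$, since $e'1_e$ is a central idempotent below $1_e$, hence lies in $D_e$, and $\alpha_e$ is the identity map of $D_e$. Thus $\wh\alpha_e$ is the identity of $\wh D_e=D(1_e)$. Consequently $\wh\alpha_e\wh\alpha_f$ is the identity on $\wh D_e\cap\wh D_f=D(1_e1_f)=D(1_{ef})=\wh D_{ef}$, which is exactly $\wh\alpha_{ef}$.

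For order preservation, given $s\le t$ one has $ss^*\le tt^*$ and $s^*s\le t^*t$, so $\wh D_{s^*}\subseteq\wh D_{t^*}$ and $\alpha_{s^*}=\alpha_{t^*}|_{D_{ss^*}}$. The one non-formal ingredient is the identity $\alpha_{t^*}(1_{ss^*})=1_{s^*s}$, which I would derive from the rule $\alpha_u(1_f)=1_{ufu^*}$ (valid for $f\le u^*u$, recalled in Section~\ref{issr}) applied with $u=t^*$ and $f=ss^*\le tt^*$, together with $t^*ss^*t=(ss^*t)^*(ss^*t)=s^*s$, which comes from $s=ss^*t$. Granting this, for $\lambda\in\wh D_{s^*}$ and a central idempotent $e'$, factoring $1_{ss^*}=1_{ss^*}1_{tt^*}$ and using multiplicativity of $\alpha_{t^*}$ gives $\wh\alpha_s(\lambda)(e')=\lambda(\alpha_{s^*}(e'1_{ss^*}))=\lambda(\alpha_{t^*}(e'1_{tt^*}))\,\lambda(1_{s^*s})=\wh\alpha_t(\lambda)(e')$, because $\lambda(1_{s^*s})=1$; hence $\wh\alpha_s$ is a restriction of $\wh\alpha_t$, i.e.\ $\wh\alpha_s\le\wh\alpha_t$.

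Finally, for multiplicativity when $s^*s=tt^*=e$, I would first match the domains: from $(st)(st)^*=ses^*=ss^*$ and $(st)^*(st)=t^*et=t^*t$ one reads off $\wh\alpha_{st}\colon\wh D_{t^*}\to\wh D_s$, while $\wh D_t=\wh D_{s^*}=D(1_e)$ makes $\wh\alpha_s\wh\alpha_t$ a map with the same domain $\wh D_{t^*}$. Then, since $\alpha_{(st)^*}=\alpha_{t^*}\alpha_{s^*}$ and $1_{(st)(st)^*}=1_{ss^*}$, for $\lambda\in\wh D_{t^*}$ and a central idempotent $e'$ I can evaluate $\wh\alpha_{st}(\lambda)(e')=\lambda(\alpha_{t^*}(\alpha_{s^*}(e'1_{ss^*})))$; evaluating the other side, and using that $\alpha_{s^*}(e'1_{ss^*})$ is a central idempotent of $D_{s^*}=D_e$, hence $\le 1_{tt^*}$, I get $\wh\alpha_s(\wh\alpha_t(\lambda))(e')=\wh\alpha_t(\lambda)(\alpha_{s^*}(e'1_{ss^*}))=\lambda(\alpha_{t^*}(\alpha_{s^*}(e'1_{ss^*})))$ as well, so the two agree. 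The main obstacle is purely bookkeeping: keeping track of which of the four idempotents $1_{ss^*},1_{s^*s},1_{tt^*},1_{t^*t}$ multiplies which element, and confirming that every element fed to $\alpha_{s^*}$, $\alpha_{t^*}$ or $\alpha_{(st)^*}$ actually lies in the relevant domain (in the spectral case this amounts to the identifications $D_{st}=D_{ss^*}$, $D_{s^*}=D_e=D_t$, etc.); once these are pinned down, each verification is a one-line computation with characters.
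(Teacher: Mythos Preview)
Your proposal is correct and follows essentially the same approach as the paper: both invoke the Lawson criterion (order preserving, restricts to a homomorphism on idempotents, and multiplicative when $s^*s=tt^*$) and verify each condition by the same elementary manipulations with $\alpha_{s^*}$, $\alpha_{t^*}$ and the identities $1_{ef}=1_e1_f$, $\alpha_{t^*}(1_{ss^*})=1_{t^*ss^*t}=1_{s^*s}$. The only differences are cosmetic: you treat the idempotent case before order preservation, and you are slightly more explicit about why $\alpha_e$ is the identity on $D_e$ and why $t^*ss^*t=s^*s$, but the computations line up step for step with the paper's.
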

\begin{proof}
As usual, it suffices to prove that $\wh \alpha$ is order preserving, $\wh \alpha(ef) =\wh\alpha(e)\wh\alpha(f)$ for idempotents $e,f\in E(S)$,  $\wh\alpha(st) = \wh\alpha(s)\wh\alpha(t)$ whenever $s^*s=tt^*$.    If $s\leq t$, then $1_{ss^*}\leq 1_{tt^*}$ and $1_{s^*s}\leq 1_{t^*t}$ and so $\wh D_s\subseteq \wh D_t$ and $\wh D_{s^*}\subseteq \wh D_{t^*}$.  Moreover, if $\lambda\in \wh D_{s^*}$, then $\wh \alpha_s(\lambda)(e) = \lambda(\alpha_{s^*}(e1_{ss^*}))=\lambda(\alpha_{s^*}(e1_{ss^*}1_{tt^*})) =\lambda(\alpha_{t^*}(e1_{ss^*}1_{tt^*}))=\lambda(1_{t^*ss^*t})\lambda(\alpha_{t^*}(e1_{tt^*})) = \lambda(1_{s^*s})\wh \alpha_t(e) = \wh\alpha_t(e)$ and so $\wh \alpha(s)\leq \wh\alpha(t)$.

If $e\in E(S)$, then for $\lambda\in \wh D_e$, we have that $\lambda(1_e)=1$ and so, for $f\in E(Z(A))$, it follows that $\wh\alpha_e(\lambda)(f) = \lambda(\alpha_e(f1_e)) = \lambda(f1_e)=\lambda(f)\lambda(1_e) = \lambda(f)$.  Thus $\wh \alpha_e(\lambda) =\lambda$, i.e., $\wh \alpha_e$ is the identity on $\wh D_e$.  We conclude that if $e,f\in E(S)$, then $\wh \alpha_e\wh \alpha_f$ is the identity on $\wh D_e\cap \wh D_f$.  But $\lambda\in \wh D_e\cap \wh D_f$ if and only if $\lambda(1_e)=1=\lambda(1_f)$, which occurs if and only if $\lambda (1_e1_f)=1$, that is, $\lambda(1_{ef})=1$. So $\wh D_e\cap \wh D_f=\wh D_{ef}$, whence $\wh \alpha_e\wh\alpha_f=\wh \alpha_{ef}$.  Thus $\wh\alpha|_{E(S)}$ is a homomorphism.

If $s^*s=tt^*$, then $\wh D_t=\wh D_{s^*}$, $\wh D_{st} = \wh D_s$ and $\wh D_{(st)^*} = \wh D_{t^*}$.  Thus $\wh \alpha_s\wh\alpha_t,\wh \alpha_{st}\colon D_{t^*}\to D_s$. If $\lambda\in D_{t^*}$, then we compute, using  $1_{tt^*} = 1_{s^*s}$, that
\begin{align*}
\wh \alpha_s(\wh\alpha_t(\lambda))(e) & = \wh\alpha_t(\lambda)(\alpha_{s^*}(e1_{ss^*})) = \lambda(\alpha_{t^*}(\alpha_{s^*}(e1_{ss^*})1_{tt^*})) \\ & = \lambda(\alpha_{t^*}(\alpha_{s^*}(e1_{ss^*}))).
\end{align*}
On the other hand,
\[\wh\alpha_{st}(\lambda)(e) = \lambda(\alpha_{(st)^*}(e1_{stt^*s^*}))=\lambda(\alpha_{t^*}(\alpha_{s^*}(e1_{ss^*})))\] and so $\wh \alpha_{st}(\lambda) = \wh\alpha_s(\wh\alpha_t(\lambda))$, as required.
\end{proof}

Let $\mathscr G=S\ltimes \wh A$ be the corresponding groupoid of germs.  It is ample as $\wh A$ has a basis of compact open sets.  Moreover, since the action is Boolean, the compact open subsets of $\mathscr G\skel 1$ of the form $U(s)=(s,\wh D_{s^*})$ form an inverse semigroup $\til S$ of compact open bisections which is a quotient of $S$ (via $s\mapsto U(s)$)  satisfying the germ conditions.  Our goal is to extend the sheaf of rings structure on $\mathcal O_A$ to a $\mathscr G$-sheaf of rings structure so that $A\rtimes S\cong \Gamma_c(\mathscr G,\mathcal O_A)$.

Let us recall that the stalk $\mathcal O_{A,\lambda} = A/I_{\lambda}$ where \[I_{\lambda} = \{a\in A\mid \exists e\in \lambda\inv(1)\ \text{with}\ ea=0\}\] and that the class $a+I_{\lambda}$ is denoted $[a]_{\lambda}$.  The unit is the class $[e]_{\lambda}$ where $\lambda(e)=1$.  We now define a $\mathscr G$-sheaf structure on $\mathcal O_A$ by putting
\begin{equation}\label{eq:sheaf.action.gpd.germs}
\alpha_{[s,\lambda]}([a]_{\lambda}) = [\alpha_s(1_{s^*s}a)]_{\wh \alpha_s(\lambda)}
\end{equation}
for $[s,\lambda]\in\mathscr G\skel 1$.  (We hope the reader will forgive our abuse of the notation $\alpha$.) Here, of course, we must have $\lambda \in \wh D_{s^*}$ and we need to show independence of the choice of $a$ and $s$.

  Indeed, first suppose that $[a]_{\lambda} = [b]_{\lambda}$.  Then there is $e\in \lambda\inv(1)$ with $ea=eb$.   Then, as $\lambda(1_{s^*s})=1$, we have that \[\wh\alpha_s(\lambda)(\alpha_s(e1_{s^*s})) = \lambda(\alpha_{s^*}(\alpha_s(e1_{s^*s})1_{ss^*})) = \lambda(\alpha_{s^*s}(e1_{s^*s})) = \lambda(e1_{s^*s})=1\] and so
\begin{align*}
[\alpha_s(1_{s^*s}a)]_{\wh \alpha_s(\lambda)}  & = [\alpha_s(e1_{s^*s})\alpha_s(1_{s^*s}a)]_{\wh \alpha_s(\lambda)}=[\alpha_s(ea1_{s^*s})]_{\wh \alpha_s(\lambda)}\\ & =[\alpha_s(eb1_{s^*s})]_{\wh \alpha_s(\lambda)} =[\alpha_s(e1_{s^*s})\alpha_s(1_{s^*s}b)]_{\wh \alpha_s(\lambda)}\\ & =[\alpha_s(1_{s^*s}b)]_{\wh \alpha_s(\lambda)}
\end{align*}
establishing independence of the choice of $a$.  If $[s,\lambda]=[t,\lambda]$, then we can find $u\leq s,t$ with $\lambda \in \wh D_{u^*}$.  By symmetry, it is enough to show that $[\alpha_s(1_{s^*s}a)]_{\wh \alpha_s(\lambda)} = [\alpha_u(1_{u^*u}a)]_{\wh \alpha_u(\lambda)}$.  Note that $\wh\alpha_u(\lambda) =\wh\alpha_s(\lambda)$
and so we just need to prove that $[\alpha_s(1_{s^*s}a)]_{\wh \alpha_s(\lambda)}=[\alpha_u(1_{u^*u}a)]_{\wh \alpha_s(\lambda)}$.  Since $\lambda(1_{u^*u})=1$, we have that $[a]_{\lambda} = [1_{u^*u}a]_{\lambda}$.  Hence, by what we just proved, and the equality $u=su^*u$, we conclude that
$[\alpha_s(1_{s^*s}a)]_{\wh \alpha_s(\lambda)}=[\alpha_s(1_{s^*s}1_{u^*u}a)]_{\wh \alpha_s(\lambda)}=[\alpha_s(1_{u^*u}a)]_{\wh \alpha_s(\lambda)}=[\alpha_s\alpha_{u^*u}(1_{u^*u}a)]_{\wh \alpha_s(\lambda)}=[\alpha_u(1_{u^*u}a)]_{\wh \alpha_s(\lambda)}$, as required.

Note that $\alpha_{[s,\lambda]}\colon \mathcal O_{A,\lambda}\to \mathcal O_{A,\wh \alpha_s(\lambda)}$ is clearly a homomorphism of unital rings since $a\mapsto \alpha_s(1_{s^*s}a)$ a ring homomorphism $A\to D_s$ sending $1_{s^*s}\in \lambda\inv(1)$ to $1_{ss^*}\in \wh \alpha_s(\lambda)\inv(1)$.

We check  that $\alpha\colon \mathscr G\skel 1\times_{\dom,p} E\to E$ is continuous.  A basic neighborhood of $\alpha_{[s,\lambda]}([a]_{\lambda})=[\alpha_s(1_{s^*s}a)]_{\wh \alpha_s(\lambda)}$ is of the form  $(\alpha_s(1_{s^*s}a),W)$ where $W$ is a compact open neighborhood of $\wh \alpha_s(\lambda)$ contained in $\wh D_s$.  Then $V=\wh\alpha_{s^*}(W)$ is a compact open neighborhood of $\lambda$ contained in $\wh D_{s^*}$ with $\wh \alpha_s(V)\subseteq W$.  Moreover, $(s,V)$ is a compact open bisection containing $\gamma=[s,\lambda]$ and $(a,V)$ is a compact open subset of $E$ containing $[a]_{\lambda}$.  We claim that $U= ((s,V)\times (a,V))\cap (\mathscr G\skel 1\times_{\dom,p} E)$ is a neighborhood of $(\gamma,[a]_{\lambda})$ with $\alpha(U)\subseteq (\alpha_s(1_{s^*s}a),W)$.  Indeed, a typical element of $U$ is of the form $([s,\tau],[a]_{\tau})$ with $\tau\in V$, whence $\wh\alpha_s(\tau)\in W$.  Then $\alpha_{[s,\tau]}([a]_{\tau}) = [\alpha_s(1_{s^*s}a)]_{\wh \alpha_s(\tau)}\in (\alpha_s(1_{s^*s}a),W)$.

To finish we verify (S3) and leave (S1) and (S2) to the reader. Let $\beta=[s,\lambda'']$ and $\gamma = [t, \lambda']$. We have to check that $\alpha_{\beta}(\alpha_{\gamma}([a]_\lambda)) = \alpha_{\beta\gamma}([a]_\lambda)$ whenever $\dom(\beta)=\ran(\gamma)$ and $\dom(\gamma)=p([a]_\lambda)=\lambda$. From $\dom(\beta)=\ran(\gamma)$ we get that $\lambda'' = \wh \alpha_t(\lambda')$ and from $\dom(\gamma)=p([a]_\lambda)$ we get that $\lambda = \lambda'$. Now, notice that $\alpha_{\beta}(\alpha_{\gamma}([a]_\lambda)) = \alpha_{[s, \wh \alpha_t(\lambda')]}\left([\alpha_t(1_{t^*t} a)]_{\wh \alpha_t(\lambda')}\right) = [ \alpha_s\left(1_{s^*s} \alpha_t(1_{t^*t}a)\right)]_{\wh \alpha_s (\wh \alpha_t(\lambda'))} $. On the other hand, since $\beta \gamma = [st, \lambda']$ we have that \[\alpha_{\beta\gamma}([a]_\lambda)= [\alpha_{st}(1_{t^* s^* s t} a)]_{\wh \alpha_{st}(\lambda')}.\]  (S3) now follows from the calculation below:
\begin{equation*}
\begin{split} \alpha_{st}(1_{t^*s^*st}a)& = \alpha_s(\alpha_t(1_{t^*s^*st}a)) =  \alpha_s(\alpha_t(1_{t^*s^*st}1_{t^*t}a)) \\ & = \alpha_s(\alpha_t(1_{t^*s^*st})\alpha_t(1_{t^*t}a)) = \alpha_s(1_{s^*stt^*}\alpha_t(1_{t^*t}a)) \\ & =\alpha_s(1_{s^*s}1_{tt^*}\alpha_t(1_{t^*t}a)) = \alpha_s(1_{s^*s}\alpha_t(1_{t^*t}a)),
\end{split}
\end{equation*}
where we used that $1_{tt^*}$ is the identity on the range of $\alpha_t$, and if $e\leq t^*t$ then $\alpha_t(1_e) = 1_{tet^*}$.

\begin{Prop}\label{p:well.def.back}
Let $s,t\in S$ and suppose that $U(s)=U(t)$.  Then $1_{ss^*}\delta_s+\mathcal N=1_{tt^*}\delta_t+\mathcal N$ in $A\rtimes S$.
\end{Prop}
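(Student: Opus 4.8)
The plan is to unpack what $U(s)=U(t)$ says about domains and germs in the groupoid of germs $\mathscr G=S\ltimes\wh A$, to translate that into an identity of central idempotents of $A$, and then to push that identity through the quotient map $\mathcal L\to A\rtimes S$ using the generators of $\mathcal N$ from \eqref{eq:define.N}.

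First I would record the consequences of $U(s)=U(t)$. Taking domains, $\wh D_{s^*}=\dom(U(s))=\dom(U(t))=\wh D_{t^*}$; since $e\mapsto D(e)$ is an isomorphism of generalized Boolean algebras and $\wh D_{s^*}=D(1_{s^*s})$, this forces $1_{s^*s}=1_{t^*t}$. Moreover, for each $\lambda\in\wh D_{s^*}$ the germs agree, $[s,\lambda]=[t,\lambda]$, so by the definition of the germ equivalence there is $u\le s,t$ with $\lambda\in\wh D_{u^*}$. Hence the open sets $\wh D_{u^*}$, with $u$ ranging over the elements of $S$ below both $s$ and $t$, cover the compact open set $\wh D_{s^*}$; extracting a finite subcover gives $u_1,\dots,u_n\in S$ with $u_i\le s$, $u_i\le t$ and $\wh D_{s^*}=\bigcup_{i=1}^n\wh D_{u_i^*}=\wh D_{t^*}$. (If $\wh D_{s^*}=\emptyset$ then $1_{s^*s}=1_{ss^*}=0$, and likewise $1_{tt^*}=0$, so the asserted identity reads $0=0$; so assume $n\ge1$.)

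Next I would convert the covering into algebra. Applying $e\mapsto D(e)$ once more, $\wh D_{s^*}=\bigcup_i\wh D_{u_i^*}$ becomes $1_{s^*s}=\bigvee_{i=1}^n 1_{u_i^*u_i}$, and similarly $1_{t^*t}=\bigvee_{i=1}^n 1_{u_i^*u_i}$. Put $f_i=1_{u_iu_i^*}$. Since $u_i\le s$ we have $u_i^*u_i\le s^*s$ and $s(u_i^*u_i)s^*=u_iu_i^*$, so $\alpha_s(1_{u_i^*u_i})=1_{u_iu_i^*}=f_i$; applying the ring isomorphism $\alpha_s\colon D_{s^*}\to D_s$ (which preserves $+$, $\cdot$ and the unit, hence the join $e\vee f=e+f-ef$, and sends $1_{s^*s}$ to $1_{ss^*}$) to the first join identity yields $1_{ss^*}=\bigvee_{i=1}^n f_i$, and the same computation with $t$ in place of $s$ gives $1_{tt^*}=\bigvee_{i=1}^n f_i$; in particular $1_{ss^*}=1_{tt^*}$. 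Now I would pass to $A\rtimes S$. For nonempty $I\subseteq\{1,\dots,n\}$ set $f_I=\prod_{i\in I}f_i$ and fix $i_0\in I$; then $f_I\le f_{i_0}=1_{u_{i_0}u_{i_0}^*}$, so $f_I\in D_{u_{i_0}}=D_{u_{i_0}u_{i_0}^*}$, and since also $f_I\le1_{ss^*}=1_{tt^*}$ we have $f_I\in D_s\cap D_t$. Because $u_{i_0}\le s$ and $u_{i_0}\le t$, both $f_I\delta_{u_{i_0}}-f_I\delta_s$ and $f_I\delta_{u_{i_0}}-f_I\delta_t$ are generators of $\mathcal N$, hence $f_I\delta_s-f_I\delta_t\in\mathcal N$. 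By inclusion--exclusion $1_{ss^*}=\bigvee_i f_i=\sum_{\emptyset\ne I}(-1)^{|I|-1}f_I$, and since $\delta_s$ is additive on $D_s$ this gives in $\mathcal L$ the identity $1_{ss^*}\delta_s=\sum_{\emptyset\ne I}(-1)^{|I|-1}f_I\delta_s$, and likewise $1_{tt^*}\delta_t=\sum_{\emptyset\ne I}(-1)^{|I|-1}f_I\delta_t$; reducing modulo $\mathcal N$ and using $f_I\delta_s-f_I\delta_t\in\mathcal N$ for every $I$ yields $1_{ss^*}\delta_s+\mathcal N=1_{tt^*}\delta_t+\mathcal N$, as required.

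The step I expect to be the crux is arranging both sides to be expressed through the single common family $\{f_i\}_i$ coming from one finite refinement of the cover: once one sees that $1_{ss^*}$ and $1_{tt^*}$ are the same join $\bigvee_i 1_{u_iu_i^*}$, and that each product $f_I$ lies in $D_{u_{i_0}}$ with $u_{i_0}$ below both $s$ and $t$, the conclusion is just the inclusion--exclusion bookkeeping already used elsewhere in the paper.
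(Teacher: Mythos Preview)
Your proof is correct and follows essentially the same strategy as the paper's: use the germ equality to produce finitely many $u_i\le s,t$ whose associated idempotents join to the relevant unit, then observe that for each piece $a\in D_{u_i}$ one has $a\delta_s-a\delta_t\in\mathcal N$ via the common lower bound $u_i$, and sum. The only cosmetic differences are that the paper works directly on the range side (covering $\wh D_s$ rather than $\wh D_{s^*}$, so your transfer step via $\alpha_s$ is avoided) and that the paper disjointifies the cover to get orthogonal idempotents $e_1,\dots,e_n$ summing to $1_{ss^*}$ rather than using inclusion--exclusion; both lead to the same endgame.
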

\begin{proof}
First note that $U(s)=U(t)$ implies $\wh D_s=\wh D_t$ and $\wh D_{s^*}=\wh D_{t^*}$, and hence $1_{ss^*}=1_{tt^*}$; let's call this latter idempotent $e$.  Also note that $U(s^*)=U(t^*)$.  Let $\lambda \in \wh D_s=\wh D_t$.  Then since $U(s^*)=U(t^*)$, we have that $[s^*,\lambda]=[t^*,\lambda]$.  Thus we can find $w_{\lambda}\in S$ with $\lambda\in \wh D_{w_{\lambda}}$ and $w_{\lambda}^*\leq s^*,t^*$.  By compactness of $\wh D_s$, we can find $\lambda_1,\ldots, \lambda_n$ such that $\wh D_s=\wh D_{w_{\lambda_1}}\cup\cdots\cup \wh D_{w_{\lambda_n}}$.  Let $U_1=\wh D_{w_{\lambda_1}}$ and $U_{i+1} = \wh D_{w_{\lambda_{i+1}}}\setminus (U_1\cup\cdots\cup U_i)$ for $1\leq i<n$.  Then $\wh D_s=U_1\cup\cdots\cup U_n$ with the $U_i$ compact open and pairwise disjoint and $U_i\subseteq \wh D_{w_{\lambda_i}}$.  Then there are pairwise orthogonal idempotents $e_1,\ldots, e_n\in E(Z(Ae))$ with $e=e_1+\cdots + e_n$ and $U_i = \{\lambda\in \wh A\mid \lambda(e_i)=1\}$.  Note that $e_i\in A1_{w_{\lambda_i}w^*_{\lambda_i}}=D_{w_{\lambda_i}}$ from $U_i\subseteq \wh D_{w_{\lambda_i}}$.  Thus $e_i\delta_s+\mathcal N = e_i\delta_{w_{\lambda_i}}+\mathcal N = e_i\delta_t+\mathcal N$, for $i=1,\ldots, n$,  as $w_{\lambda_i}\leq s,t$.  Then, recalling that $1_{ss^*}=e=1_{tt^*}$,  we have that $1_{ss^*}\delta_s+\mathcal N= e_1\delta_s+\cdots +e_n\delta_s+\mathcal N =e_1\delta_t+\cdots +e_n\delta_t+\mathcal N = 1_{tt^*}\delta_t+\mathcal N$ as required.
\end{proof}

\begin{Thm}\label{t:main}
If $S$ is an inverse semigroup with a spectral action $\alpha$ on a ring $A$, then $A\rtimes S\cong \Gamma_c(S\ltimes \wh A,\mathcal O_A)$ with the $S\ltimes \wh A$-sheaf  structure on $\mathcal O_A$ coming from \eqref{eq:sheaf.action.gpd.germs} and the usual sheaf of rings structure on $\mathcal O_A$ over the Pierce spectrum $\wh A$.
\end{Thm}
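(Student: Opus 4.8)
The plan is to route the isomorphism through the inverse semigroup $\til S=\{U(s)\mid s\in S\}\le\mathscr G^a$ of compact open bisections of $\mathscr G=S\ltimes\wh A$. Recall that $\til S$ satisfies the germ conditions and that $q\colon S\to\til S$, $s\mapsto U(s)$, is a surjective homomorphism. Since we have equipped $\mathcal O_A$ with the $\mathscr G$-sheaf of rings structure \eqref{eq:sheaf.action.gpd.germs}, Theorem~\ref{t:factor.through} applied to $(\mathscr G,\mathcal O_A,\til S)$ already gives an isomorphism $\Gamma_c(\mathscr G\skel 0,\mathcal O_A)\rtimes\til S\cong\Gamma_c(\mathscr G,\mathcal O_A)$, where the skew ring is formed using the action of $\til S$ on $\Gamma_c(\mathscr G\skel 0,\mathcal O_A)$ from Proposition~\ref{p:action.ok2}. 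Combining this with the Pierce isomorphism $A\cong\Gamma_c(\wh A,\mathcal O_A)=\Gamma_c(\mathscr G\skel 0,\mathcal O_A)$, $a\mapsto\wh a$, from Corollary~\ref{c:pierce}, the first thing I would verify is that under this identification the Proposition~\ref{p:action.ok2}-action $\til\alpha$ of $\til S$ pulls back along $q$ to the original action $\alpha$: for $a\in D_{s^*}$ one computes directly from \eqref{eq:sheaf.action.gpd.germs} that $\til\alpha_{U(s)}(\wh a)=\wh{\alpha_s(a)}$ (both sides are sections supported on $\wh D_s$, and at points outside $\wh D_s$ both vanish, using Lemma~\ref{l:boolean.facts} for the right-hand side). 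In particular the ideals match, $D_{U(s)}\leftrightarrow D_s$ and $D_{U(e)}$ has identity $1_e$ for $e\in E(S)$, and $\alpha=\til\alpha\circ q$ as homomorphisms into $I_A$. This reduces the theorem to producing a ring isomorphism $A\rtimes S\cong A\rtimes\til S$.

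For that, I would observe that $(A\rtimes\til S,\Theta_{\til S},\Phi_{\til S}\circ q)$ is a covariant system for $(S,A,\alpha)$: axiom (C1) follows from (C1) for the $\til S$-system together with $\alpha_s=\til\alpha_{q(s)}$ and $q(s^*)=q(s)^*$, while (C2) follows from (C2) for the $\til S$-system and the fact just noted that $1_e$ is the identity of $D_{U(e)}$. By Theorem~\ref{t:adjunction} this produces a homomorphism $\Pi\colon A\rtimes S\to A\rtimes\til S$; it is induced by the map $\mathcal L_S\to\mathcal L_{\til S}$, $a\delta_s\mapsto a\delta_{q(s)}$ (well defined since $D_s=D_{q(s)}$, and multiplicative since $\alpha=\til\alpha\circ q$), so that $\Pi(a\delta_s+\mathcal N_S)=a\delta_{q(s)}+\mathcal N_{\til S}$ for $a\in D_s$. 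Surjectivity of $\Pi$ is immediate, since $\Theta_{\til S}(A)$ and $\Phi_{\til S}(\til S)=\Phi_{\til S}(q(S))$ generate $A\rtimes\til S$ and both lie in the image of $\Pi$.

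The \emph{one genuinely new point}, and the step I expect to be the main obstacle, is injectivity of $\Pi$, i.e. that the preimage of $\mathcal N_{\til S}$ in $\mathcal L_S$ is exactly $\mathcal N_S$. I would prove the nontrivial inclusion in two stages. First, $\ker(\mathcal L_S\to\mathcal L_{\til S})\subseteq\mathcal N_S$: if $q(s)=q(s')$ then $U(s)=U(s')$, so $1_{ss^*}=1_{s's'^*}$ and Proposition~\ref{p:well.def.back} gives $\Phi_S(s)=\Phi_S(s')$, whence $a\delta_s+\mathcal N_S=\Theta_S(a)\Phi_S(s)=\Theta_S(a)\Phi_S(s')=a\delta_{s'}+\mathcal N_S$ for $a\in D_s=D_{s'}$; collapsing each (finite) $q$-fibre to one representative then shows that any element whose $q$-fibre sums all vanish lies in $\mathcal N_S$. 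Second, $\mathcal N_{\til S}$ is additively generated by the elements $b\delta_r-b\delta_{r'}$ with $r\le r'$ in $\til S$ and $b\in D_r$ (\cite[Lemma~2.3]{BGOR2017}), and each such generator lifts to $\mathcal N_S$: picking any $\sigma'\in q^{-1}(r')$ and any $\tau\in q^{-1}(r)$, the element $\sigma=\tau\tau^*\sigma'$ satisfies $\sigma\le\sigma'$ and $q(\sigma)=r$, so $b\delta_\sigma-b\delta_{\sigma'}\in\mathcal N_S$ maps to $b\delta_r-b\delta_{r'}$. Given $x\in\mathcal L_S$ with image in $\mathcal N_{\til S}$, subtract from $x$ an element $y\in\mathcal N_S$ lifting that image; then $x-y\in\ker(\mathcal L_S\to\mathcal L_{\til S})\subseteq\mathcal N_S$, so $x\in\mathcal N_S$. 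Hence $\Pi$ is an isomorphism, and the chain $A\rtimes S\cong A\rtimes\til S\cong\Gamma_c(\mathscr G\skel 0,\mathcal O_A)\rtimes\til S\cong\Gamma_c(\mathscr G,\mathcal O_A)=\Gamma_c(S\ltimes\wh A,\mathcal O_A)$ completes the proof.
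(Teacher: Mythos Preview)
Your proposal is correct and follows essentially the same route as the paper: reduce via the Pierce isomorphism $A\cong\Gamma_c(\wh A,\mathcal O_A)$ and Theorem~\ref{t:factor.through} (applied to $\til S$) to the problem of identifying $A\rtimes S$ with $A\rtimes\til S$, using along the way the key identity $\til\alpha_{U(s)}(\wh a)=\wh{\alpha_s(a)}$, which is exactly the paper's equation~\eqref{eq:hat.til}.

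The one genuine difference is in how the isomorphism $A\rtimes S\cong A\rtimes\til S$ is established. The paper constructs \emph{both} directions as covariant-system homomorphisms: a system $(R',\theta,\p)$ for $(S,A,\alpha)$ and a system $(R,\theta',\p')$ for $(\til S,\Gamma_c(\wh A,\mathcal O_A),\til\alpha)$, and then checks directly on generators $a\delta_s+\mathcal N$ and $\wh a\delta_{U(s)}+\mathcal N'$ that $\pi$ and $\pi'$ are mutual inverses. Proposition~\ref{p:well.def.back} enters as the well-definedness of $\p'$ on $\til S$. You instead build only the forward map $\Pi$ and prove injectivity by a kernel-lifting argument, invoking Proposition~\ref{p:well.def.back} to show $\ker(\mathcal L_S\to\mathcal L_{\til S})\subseteq\mathcal N_S$ and lifting the additive generators of $\mathcal N_{\til S}$ to $\mathcal N_S$. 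Both arguments are short and rely on the same ingredients; the paper's is slightly cleaner (no explicit manipulation of the ideals $\mathcal N$), while yours makes transparent the purely inverse-semigroup-theoretic fact that passing from $S$ to its germ-image $\til S$ does not change the skew ring.
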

\begin{proof}
We already have an isomorphism $A\cong \Gamma_c(\wh A,\mathcal O_A)$ given by $a\mapsto \wh a$ by Theorem~\ref{t:ring.of.sec}.  Theorem~\ref{t:factor.through} then gives an isomorphism $\Gamma_c(S\times \wh A,\mathcal O_A)\cong A\rtimes \til S$ where $\til S$ acts on $\Gamma_c(\wh A,\mathcal O_A)\cong A$ via the action $\til \alpha$ from Section~\ref{s:as.quotient}.  Put $R=A\rtimes S$ and $R'=\Gamma_c(\wh A,\mathcal O_A)\rtimes \til S$. We use $\mathcal N$ for the ideal used in the definition of $A\rtimes S$ and $\mathcal N'$ for the ideal in the definition of $\Gamma_c(\wh A,\mathcal O_A)\rtimes \til S$.   We want to define inverse covariant systems.

Define $\theta\colon A\to R'$ by the composition of $a\mapsto \wh a$ with the embedding  $\Gamma_c(\wh A,\mathcal O_A)\hookrightarrow R'$.  Define $\p\colon S\to R'$ by $\p(s) = \chi_{U(ss^*)}\delta_{U(s)}+\mathcal N'$.  Clearly $\theta$ is a homomorphism.  Also $\p$ is  a homomorphism, as it is the composition of $s\mapsto U(s)$ with the canonical homomorphism of $\til S$ into $R'$.  We check the covariance conditions.  If $e\in E(S)$, then $U(e)=\wh D_e$ and so $\p(e) = \chi_{\wh D_e}\delta_{U(e)}+\mathcal N'$.  On the other hand, $\wh {1_e}(\lambda) = [1_e]_{\lambda}$.  If $\lambda(1_e)=1$, then $[1_e]_{\lambda}=1_{\lambda}$.  If $\lambda(1_e)=0$, then $f1_e=0$ for some $f\in \lambda\inv(1)$ by Lemma~\ref{l:boolean.facts} and so $[1_e]_{\lambda}=[f1_e]_{\lambda}=[0]_{\lambda}$.  Thus $\wh {1_e} = \chi_{\wh D_e}$ and so $\theta(1_e) = \chi_{\wh D_e}\delta_{U(e)}+\mathcal N'=\p(e)$.

Now suppose that $a\in D_{s^*}$.  Note that $\dom(U(s)) = \wh D_{s^*}$.  Since $1_{s^*s}a=a$, it follows that $\wh a$ is supported on $\wh D_{s^*s} = \wh D_{s^*}$ (see the proof of Theorem~\ref{t:ring.of.sec}).  Now we check that $\theta(\alpha_s(a)) = \p(s)\theta(a)\p(s^*)$. Note that \[\theta(\alpha_s(a))= \wh{\alpha_s(a)}\delta_{U(ss^*)}+\mathcal N'\] as $\alpha_s(a)\in D_s=A1_{ss^*}$ implies that $\wh{\alpha_s(a)}$ is supported on $\wh D_s=U(ss^*)$.   On the other hand,
\begin{align*}
\p(s)\theta(a)\p(s^*) & = (\chi_{\wh D_s}\delta_{U(s)})(\wh a\delta_{U(s^*s)})(\chi_{\wh D_{s^*}}\delta_{U(s^*)})+\mathcal N'\\
&= (\chi_{\wh D_s}\delta_{U(s)})(\wh a\delta_{U(s^*)})+\mathcal N' \\ & = \til\alpha_{U(s)}(\chi_{\wh D_{s^*}}\wh a)\delta_{U(ss^*)}+\mathcal N'=\til\alpha_{U(s)}(\wh a)\delta_{U(ss^*)}+\mathcal N'
\end{align*}
and so we must show $\wh{\alpha_s(a)} = \til\alpha_{U(s)}(\wh a)$.

Both $\wh{\alpha_s(a)}$ and $\til \alpha_{U(s)}(\wh a)$ are supported on $\wh D_s$.   If $\lambda\in \wh D_s=\ran(U(s))$, then $\lambda = \wh\alpha_s(\nu)$ for a unique $\nu\in \wh D_{s^*}$ with $[s,\nu]\in U(s)$. Then we have \[\til\alpha_{U(s)}(\wh a)(\lambda) = \alpha_{[s,\nu]}(\wh a(\nu)) = \alpha_{[s,\nu]}([a]_{\nu}) = [\alpha_s(1_{s^*s}a)]_{\lambda}=[\alpha_s(a)]_{\lambda}=\wh{\alpha_s(a)}(\lambda).\]  We conclude that
\begin{equation}\label{eq:hat.til}
\wh{\alpha_s(a)} = \til\alpha_{U(s)}(\wh a)
\end{equation}
 as required.

It follows that we have a homomorphism $\pi = \theta\rtimes \p\colon R\to R'$ given by $\pi(a\delta_s+\mathcal N) = \theta(a)\p(s)$.

Now we define $\theta'\colon \Gamma_c(\wh A,\mathcal O_A)\to R$ and $\p'\colon \til S\to R$ by setting $\theta'$ to be the composition of the mapping $\wh a\mapsto a$ with the inclusion of $A$ and putting $\p'(U(s)) = 1_{ss^*}\delta_s+\mathcal N$.  Note that $\p'$ is well defined by Proposition~\ref{p:well.def.back}.  Clearly $\theta'$ is a homomorphism.  Since $U(s)U(t)=U(st)$, it also follows that $\p'$ is a homomorphism.  We now check the covariance conditions.  The idempotents of $\til S$ are of the form $U(e)$ with $e\in E(S)$. The identity of the domain of $\til \alpha_{U(e)}$ is $\chi_{U(e)} = \wh {1_e}$.  Thus $\theta'(\chi_{U(e)}) = 1_e\delta_e+\mathcal N$.  On the other hand, $\p'(U(e)) = 1_e\delta_e+\mathcal N$ by definition.  Next suppose that $\wh a$, with $a\in A$, is supported on $\wh D_{s^*}=\dom(U(s))$, that is, $\wh a\in \Gamma_c(\wh A,\mathcal O_a)\chi_{\wh D_{s^*}} =\Gamma_c(\wh A,\mathcal O_a)\wh{1_{s^*s}}$.  Then $\wh a =\wh a\wh {1_{s^*s}} = \wh{a1_{s^*s}}$ and so $a=a1_{s^*s}$, i.e., $a\in D_{s^*}$.

We compute that
\[
\p'(s)\theta'(\wh a)\p'(s^*) = (1_{ss^*}\delta_s)(a\delta_{s^*s})(1_{s^*s}\delta_{s^*})+\mathcal N\\
= \alpha_s(a)\delta_{ss^*}+\mathcal N . \]
On the other hand, by~\eqref{eq:hat.til}, we have that $\theta'(\til\alpha_{U(s)}(\wh a)) = \theta'(\wh{\alpha_s(a)}) = \alpha_s(a)\delta_{ss^*}+\mathcal N$.  Thus $(R,\theta',\p')$ is covariant and induces a homomorphism $\pi'=\theta'\rtimes \p'\colon R'\to R$ given by $\pi'(\wh a\delta_{U(s)}+\mathcal N') = \theta'(\wh a)\p'(U(s))$.
Let us verify that $\pi$ and $\pi'$ are inverse homomorphisms.  Let $a\in D_s$.  Then $1_{ss^*}a=a$ implies that $\wh a$ is supported on $\wh D_s=U(ss^*)$.  First note that if $a\in D_s$, then \[\pi(a\delta_s+\mathcal N) = \theta(a)\p(s) = (\wh a\delta_{U(ss^*)})(\chi_{U(ss^*)}\delta_{U(s)})+\mathcal N' = \wh a\delta_{U(s)}+\mathcal N'.\]  If $\wh a$ is supported on $\ran(U(s))=\wh D_s$, then $\wh a=\wh a\wh {1_{ss^*}} = \wh {a1_{ss^*}}$ and so $a=a1_{ss^*}$.  Thus $a\in D_s$.  Therefore,
\[\pi'(\wh a\delta_{U(s)}+\mathcal N')= \theta'(\wh a)\p'(U(s)) = (a\delta_{ss^*})(1_{ss^*}\delta_s)+\mathcal N = a\delta_s+\mathcal N.\]  It follows immediately that $\pi,\pi'$ are inverse homomorphisms.  This completes the proof.
\end{proof}

Putting together Theorems~\ref{t:factor.through} and~\ref{t:main} we see that skew inverse semigroup rings with respect to spectral actions and convolution algebras of sheaves of rings over ample groupoids are essentially one and the same. Future work will show that groupoid convolution algebras
play a role in understanding skew inverse semigroup rings.

\begin{Rmk}
We note that the ample groupoid $S\ltimes \wh A$ was not the unique possible choice.  If we took any generalized Boolean algebra $B$ in $E(Z(A))$ containing the set $\{1_e\mid e\in E(S)\}$, then we could have used Theorem~\ref{t:ring.of.sec} to represent $A$ as the global sections with compact support of a sheaf of rings on $\wh B$, defined an analogous action of $S$ on $\wh B$ and proceeded with an identical proof to extend the sheaf of rings structure over $\wh B$ to one over $S\ltimes \wh B$ to realize the skew inverse semigroup ring  as a convolution algebra over $S\ltimes \wh B$.  Each such generalized Boolean algebra gives a quotient space of $\wh A$, and hence a smaller unit space for our groupoid, but it has the property that the stalks of the sheaf of rings become in a sense bigger.  Still, it might be reasonable to work with the generalized Boolean algebra generated by $\{1_e\mid e\in E(S)\}$, which gives the smallest unit space and the largest stalks.  Then it would have the advantage that the domains of the elements of $S$ generate the generalized boolean algebra of compact open subsets of the unit space.
\end{Rmk}

\end{document}